\documentclass[11pt,reqno]{amsart}
\usepackage{cite}
\usepackage{amssymb}
\usepackage{amsfonts}
\usepackage{mathrsfs}
\usepackage[all]{xy}

\hyphenation{mon-oid mon-oids} \hyphenation{ver-ti-ces}
\hyphenation{imm-er-sion imm-er-sions}  \hyphenation{group-oid
group-oids} \hyphenation{in-duc-tive}

\def\stab#1{\mathrm{Stab}(#1)}
\def\Image{\mathop{\mathrm{Im}}}

\newcommand{\rmv}[1]{}

\def\Ec#1#2{\ensuremath{\mathscr E_{#1}(#2)}}
\def\sp{\ensuremath{\mathsf {SP}(M,X)}}
\def\fmon#1#2{\ensuremath{\mathsf{M}_{#1}(#2)}}
\def\st#1#2{\ensuremath{\mathrm{States}_{#1}(#2)}}

\def\omst{\omega+\ast}

\def\Pc{\ensuremath{\mathrel{\mathscr{P}}}}

\def\C{\mathscr C}
\def\Om{\ensuremath{\Omega}}
\def\MM{\ensuremath{{\mathscr M}}}
\def\pv#1{\ensuremath{{\bf#1}}}

\def\ilim{\varprojlim}

\def\wh{\widehat}
\def\inv{^{-1}}
\def\p{\varphi}

\def\pinv{{\p \inv}}
\def\J{\mathrel{{\mathscr J}}} 
\def\D{{\mathcal D}} 
\def\C{{\mathscr C}}
\def\R{\mathrel{{\mathscr R}}} 
\def\H{\mathrel{{\mathscr H}}} 

\def\e<{\leq _{E}}

\def\ov#1{\ensuremath{\overline {#1}}}
\def\KG#1{\ensuremath{\mathsf K_{\pv G} (#1)}}

\def\til#1{\ensuremath{\widetilde {#1}}}

\def\looop#1{\mathop{\xymatrix{\ar@(r,u)_{#1}}}}

\def\plstar#1#2{\mathop{\xymatrix{\ar[r]^{#1} &\ar@(r,u)_{#2}}}}

\def\malce{\protect\mathbin{\hbox{\protect$\bigcirc$\rlap{\kern-8.75pt\raise0,50pt\hbox{\protect$\mathtt{m}$}}}}}

\def\Ac{\ensuremath{{\mathscr A}}}

\def\1sk{^{(1)}}

\def\to{\rightarrow}

\def\Fc{\ensuremath{\mathcal F_n}}

\def\data{\ifcase\month\or January\or February \or March\or April\or May
\or June\or July\or August\or September\or October\or November \or
December\fi\space\number\day, \number\year}
\def\flow#1{\mathrel {#1}}
\def\FFF#1{\ensuremath{#1\mathcal F_n}}
\def\dom{\mathop{\mathrm{dom}}}
\def\ran{\mathop{\mathrm{ran}}}
\def\fix{\mathop{\mathrm{fix}}}
%

\def\Thmname{Theorem}
\def\Propname{Proposition}
\def\Lemmaname{Lemma}
\def\Definitionname{Definition}
%
\newtheorem{Thm}{\Thmname}[section]
\newtheorem{Prop}[Thm]{\Propname}
\newtheorem{Lemma}[Thm]{\Lemmaname}
{\theoremstyle{definition}
\newtheorem{Def}[Thm]{\Definitionname}}
{\theoremstyle{remark}
\newtheorem{Rmk}[Thm]{Remark}}
\newtheorem{Cor}[Thm]{Corollary}
{\theoremstyle{remark}
\newtheorem{Example}[Thm]{Example}}

\newtheorem{Conjecture}[Thm]{Conjecture}

{\theoremstyle{definition}
}
{\theoremstyle{remark}
}

{\theoremstyle{remark}
}
{\theoremstyle{remark}
}
\newtheorem*{Thm*}{Theorem}

\numberwithin{equation}{section}

\title[Lower Bound for Complexity]{An Effective Lower Bound for Group Complexity of Finite Semigroups and Automata}

\author{Karsten Henckell\and John Rhodes\and Benjamin Steinberg}
\address{Department of Mathematics/Computer Science \\ New College of Florida
5800 Bay Shore Road Sarasota, Florida 34243-2109\\ \and
Department of Mathematics\\
University of California at Berkeley \\
Berkeley \\ CA 94720\\
USA\\ \and School of Mathematics and Statistics\\
Carleton University \\
1125 Colonel By Drive\\
Ottawa, Ontario  K1S 5B6 \\
Canada}
\email{KHenckell@ncf.edu\and jrhodes@math.berkeley.edu \newline\and  bsteinbg@math.carleton.ca}
\thanks{The second two authors gratefully acknowledge the support of NSERC and the Committee on Research of the Academic Senate of the University of California at Berkeley for their generous support.}

\date{December 3, 2008}

\keywords{Krohn-Rhodes complexity}
\subjclass[2000]{20M07}
\makeindex

\begin{document}

\begin{abstract}  The question of computing the group complexity of
  finite semigroups and automata was first posed in K.~Krohn and
  J.~Rhodes, \textit{Complexity of finite semigroups}, Annals of
  Mathematics (2) \textbf{88} (1968), 128--160, motivated by the Prime
  Decomposition Theorem of K. Krohn and J.~Rhodes, \textit{Algebraic theory
    of machines, {I}:
{P}rime decomposition theorem for finite semigroups and machines},
Transactions of the American Mathematical Society \textbf{116} (1965),
450--464.  Here we provide an
  effective lower bound for group complexity.
\end{abstract}

\maketitle
\tableofcontents
\section{Introduction}
One of the oldest open problems in finite semigroup and automata theory is the computability of group complexity~\cite{KRannals}.  Krohn and Rhodes proved the Prime Decomposition Theorem~\cite{PDT} stating that every finite semigroup divides an iterated wreath product of its simple group divisors and a certain $3$-element idempotent semigroup called the ``flip-flop''~\cite[page 224]{qtheor}.  Recall that a semigroup is called aperiodic if all its subgroups are trivial.  It follows from the Prime Decomposition Theorem that if $S$ is a finite semigroup, then there is a division of the form
\begin{equation}\label{definecomplexity}
S\prec A_n\wr G_n\wr A_{n-1}\cdots \wr A_1\wr G_1\wr A_0
\end{equation}
where the $A_i$ are aperiodic semigroups and the $G_i$ are groups (where we omit the bracketing).  The group complexity (or simply complexity) of $S$ is the minimum possible value of $n$ over all wreath product decompositions \eqref{definecomplexity}.  A large part of finite semigroup theory has been developed around resolving this one problem of finding an algorithm to compute complexity.  For instance, Tilson's influential derived category construction~\cite{Tilson} was introduced~\cite{TilsonFLC,TilsonXII} exactly to provide an accessible proof to the second author's Fundamental Lemma of Complexity~\cite{flc,RFLC1,RFLC2}, stating that complexity does not drop under aperiodic surmorphisms. Ash's celebrated solution to the second author's Type II conjecture~\cite{Ash,HMPR}, and its group theoretic reformulation~\cite{RZ}, grew out of an attempt to compute lower bounds for complexity~\cite{lowerbounds1,lowerbounds2}.

Despite years of sustained work, there are not many classes of semigroups for which complexity is known to be decidable.
In~\cite{KRannals}, Krohn and Rhodes proved that complexity is decidable for completely regular semigroups.  Tilson established that complexity is decidable for semigroups with at most two non-zero $\J$-classes~\cite{2J}.  Rhodes and Tilson extended the results of~\cite{KRannals} to semigroups in the Malcev product $\pv {LG}\malce \pv A$ of local groups with aperiodic semigroups~\cite{folleyT,folleyR}.  Computable upper and lower bounds for complexity exist~\cite{lowerbounds2,lowerbounds1,Tilsonnumber,TilsonMargolis,KernelSystems,Almeida&Steinberg:2000}, but all existing bounds in the literature are known not to be tight. It is also known that the complexity pseudovarieties (above level $0$) are not finitely based, that is, admit no finite basis of pseudoidentities~\cite{finitebase}. For a modern comprehensive survey on group complexity, consult~\cite[Chapter 4]{qtheor}; also Tilson's chapters of Eilenberg~\cite{Eilenberg,TilsonXI,TilsonFLC} contain a wealth of information on complexity.  Some other sources concerning complexity include~\cite{pl,TilsonSurvey,flc,axioms,Arbib,localcomplex,RFLC1,RFLC2,TilsonFLC,complex1/2,TypeIIfalls,KernelSystems}.

The aim of this paper is to present a new lower bound for complexity that improves on all existing bounds in the literature.  The authors have some reason to believe that these bounds may be tight; only future work will tell.  This research had its origins in earlier unpublished work of the second author~\cite{flows}.

The paper is roughly organized as follows.  First we introduce the notion of flow lattices.  Then we specialize to the set-partition flow lattice associated to a group mapping monoid.  Afterwards, we reformulate the Presentation Lemma~\cite{pl,aperiodic,qtheor} in the language of flows.  We then proceed to define our lower bound.  Roughly speaking, the idea is that we are searching for certain sets and partitions that arise under all flows on automata of complexity $n$.  We begin with elementary examples of such sets and partitions and then apply closure operators that create bigger such sets and partitions.  Our lower bound consists of basically all the sets and partitions we can effectively construct in this way.

The reader is referred to~\cite{qtheor} for basic notation and definitions from finite semigroup theory; see also~\cite{Arbib,Eilenberg,Almeida:book}.

\section{Flows and Lattices}
The approach of Rhodes and Tilson to regular Type II elements of an arbitrary finite semigroup~\cite{lowerbounds2}  and of Henckell~\cite{Henckell} (see also~\cite{ouraperioidcpointlikes,qtheor}) to aperiodic pointlikes
shows that calculating lower bounds for such things amounts to
studying closure operators on certain lattices.  In the first
case, one considers the partition lattice on a regular $\R$-class of a semigroup~\cite{Redux}; in the latter
one considers the power set of a semigroup. All of these lattices
are examples of what we shall call flow lattices. The intuition
is that one builds lower bounds up \emph{from below} resulting in
a closure operator (which can also be described by intersecting
closed subsets from above). So, for instance, the lower bound for
the Type II semigroup is the smallest subsemigroup containing the
idempotents and closed under weak conjugation.  The semigroup of
aperiodic pointlikes of a semigroup $S$ is the smallest
subsemigroup of the power semigroup $P(S)$ containing the
singletons that is closed under unioning cyclic groups.  We begin by setting up our abstract formalism for flows before venturing into the lattice of interest for us.

\subsection{Lattices and closure operators}
A \emph{lattice} $L$  is a partially ordered set such that each
finite subset has a meet and a join.  In particular, by
considering empty meets and joins, $L$ has a top $T$ and a bottom
$B$.   If, in addition, $L$ has arbitrary meets and joins, then it is
called a \emph{complete lattice}.   In a complete lattice, the meet
determines the join and vice versa in the usual way.  See~\cite{qtheor} for more on lattices in the context of semigroup
theory.    In this paper, we shall primarily be interested in
finite lattices.   Any finite lattice is complete and a finite
partially ordered set is a lattice if and only if it has a top and
admits pairwise meets.

\begin{Def}[Closure operator]A \emph{closure operator} on a complete lattice $L$ is
a function $c\colon L\to L$ that is order-preserving, idempotent and
increasing.  That is, for all $\ell_1,\ell_2\in L$:
\begin{enumerate}
\item  (order-preserving)\ $\ell_1\leq \ell_2\implies \ell_1c\leq \ell_2c$;
\item (idempotent)\ $\ell_1c^2 = \ell_1c$;
\item (increasing)\ $\ell_1\leq \ell_1c$.
\end{enumerate}\index{closure operator}
\end{Def}

We use $\mathscr C(L)$ to denote the set of closure operators on
$L$.

\begin{Prop}
Let $L$ be a complete lattice.  Then the set $\mathscr C(L)$ is a complete lattice with the pointwise ordering $c_1\leq c_2$ if and only if $\ell c_1\leq \ell
c_2$ for all $\ell\in L$. The meet is pointwise, i.e., is given by
\[(\ell)\bigwedge _{\alpha\in A} c_{\alpha} = \bigwedge _{\alpha\in A} \ell
c_{\alpha},\] for $\ell\in L$, $\{c_{\alpha}\mid \alpha\in A\}\subseteq \mathscr C(L)$. The
top of $\mathscr C(L)$ is the constant function taking the value $T$
(the top of $L$); the bottom is the identity map $1_L$.
\end{Prop}
\begin{proof}
Let $\{c_{\alpha}\mid \alpha\in A\}\subseteq \mathscr C(L)$ and denote by $c$ the pointwise meet of this set.
First of all, observe that if $\ell\leq \ell'$ and $\beta\in A$, then \[\bigwedge_{\alpha\in A} \ell c_{\alpha}\leq \ell c_{\beta}\leq \ell'c_{\beta}.\]  Thus $\bigwedge_{\alpha\in A} \ell c_{\alpha}\leq \bigwedge_{\alpha} \ell'c_{\alpha}$ and hence $c$ is order-preserving.  Next we show that $c$ is increasing.  Indeed, if $\ell\in L$, then $\ell\leq \ell c_{\alpha}$, for all $\alpha\in A$ and hence $\ell\leq (\ell)\bigwedge_{\alpha\in A}c_{\alpha}= \ell c$.

Since $c$ is increasing, $\ell c\leq \ell c^2$.  Thus we need only establish the reverse inequality.  Now if $\alpha\in A$, then since $c$ is pointwise below $c_{\alpha}$ we have \mbox{$\ell c^2\leq \ell c_{\alpha}^2=\ell c_{\alpha}$} and hence $\ell c^2\leq \bigwedge_{\alpha\in A} \ell c_{\alpha} = \ell c$.  This concludes the proof that $\mathscr C(L)$ is closed under pointwise meets.  Hence $\mathscr C(L)$ is a lattice and the constant map to $T$ is the top of $\mathscr C(L)$.  Since closure operators are increasing, plainly $1_L$ is the bottom.
\end{proof}

We remark that the join in $\mathscr C(L)$ is
the determined join~\cite[Chapter 6]{qtheor} and is not in general the pointwise join. (The
determined join of a subset is the meet of all its upper bounds.)

The easiest way to understand these notions is via the following
alternative characterization of a closure operator.  If $c$ is a
closure operator on $L$, an element $\ell\in L$ is called
\emph{stable} or \emph{closed} if $\ell c=\ell$.  It is well known
that the set of stable elements $Lc$ is a meet-closed subset of $L$~\cite[Proposition 6.3.6]{qtheor}.  Conversely, if $K\subseteq L$ is a meet-closed
subset (and so $T\in K$), then the function $c_K\colon L\to L$ given by
\begin{equation}\label{closureformula}
\ell c_K = \bigwedge \{k\in K\mid \ell\leq k\}
\end{equation}
is a closure operator with $K=Lc_K$.  Moreover, if $c\in \mathscr
C (L)$, then $c_{Lc} =c$~\cite[Proposition 6.3.6]{qtheor}.  Hence from \eqref{closureformula} it is
immediate that
\begin{equation}\label{reversal} c_1\leq c_2\ \text{in}\ \mathscr
C(L) \iff \mathop{\mathrm{Im}} c_1\supseteq \mathop{\mathrm{Im}}
c_2.\end{equation} We remark that the reversal in \eqref{reversal}
is crucial.  The above discussion shows that if $\mathsf C(L)$
denotes the collection of meet-closed subsets of $L$ ordered by
\emph{reverse} inclusion, then $\mathsf C(L)$ is a complete
lattice with join given by intersection. The
bottom is the set $L$, the top is the set $\{T\}$.  The meet is
determined, namely the meet of a subset $W$ is the intersection of
all meet-closed subsets containing $W$.  Equivalently, one takes
the union of $W$ and then closes it under meets.   Our discussion
establishes the following well-known proposition.

\begin{Prop}\label{closure}
Let $L$ be a complete lattice.  Then the complete lattices $(\mathscr C(L),\leq)$ and \mbox{$(\mathsf C(L), \supseteq)$} are isomorphic.
\end{Prop}

Henceforth, we identify $\mathscr C(L)$ and $\mathsf C(L)$
and so we drop the notation $\mathsf C(L)$.

\subsection{Abstract flows}
Let $L$ be a complete lattice.  Then $L^2=L\times L$ is a complete
lattice under coordinate-wise ordering.  The meet and join are
coordinate-wise.

A \emph{binary relation}\index{binary relation} on a set $A$ is a subset of $A\times A$.  If $f$ is a binary relation, we write $a\flow f b$ to indicate $(a,b)\in f$.  Binary relations form a monoid $B(A)$ where composition is given by $x \mathrel{fg} y$ if and only if there exists $z\in A$ so that $x\mathrel{f}z\mathrel{g} y$ for $x,y\in A$ and $f,g\in B(A)$.  The identity $I_A$ is just the diagonal $\{(a,a)\mid a\in A\}$.   Sometimes, it is convenient to identify $f\in B(A)$ with the map $f'\colon A\to P(A)$ (the power set of $A$) given by $af'= \{b\in A\mid a\mathrel f b\}$.  In particular, we will abuse notation and denote the function and the relation by the same letter. Consequently, any partial function $f\colon A\to A$ can be viewed as a binary relation.  The associated subset of $A\times A$ is the graph of $f$, i.e., the set $\{(a,f(a))\mid a\in \dom f\}$.  For instance, the identity of $B(A)$ is the binary relation corresponding to the identity function on $A$.  Given any subset $A'\subseteq A$, one can consider the partial identity $1_{A'}$.  The corresponding binary relation is $\{(a,a)\mid a\in A'\}$.
The case of interest for us will be binary relations on a complete lattice, but we will only be interested in those relations that preserve the lattice structure.

\begin{Def}[Abstract flow]An \emph{abstract $L$-flow} is an element of $\mathscr
C(L^2)$.  If $L$ is understood from the context, then we simply
call an element of $\mathscr C (L^2)$ an \emph{abstract flow}.\index{abstract flow}
\end{Def}

If $f$ is an abstract flow, then $\mathop{\mathrm{Im}} f$ is a meet-closed
subset of $L^2$ and hence a binary relation on $L$ that we denote $\ov f$.  So to make clear our notational conventions: $f\in \mathscr C(L^2)$ denotes a closure operator on $L\times L$ and $\ov f\in B(L)$ stands for the corresponding binary relation $\mathop{\mathrm{Im}} f$.  This leads
us to the suggestive notation $\ell_1\mathrel f \ell_2$ for the
value of $f$ on $(\ell_1,\ell_2)\in L^2$. Since many of our flows
come from automata, we also use the illustrative notation
\[\ell_1\xrightarrow{f} \ell_2.\] The elements of $\ov f$ are
called the \emph{stable pairs}\index{stable pair} of $f$.  We write $\ell_1\flow
{\ov f} \ell_2$ to indicate $(\ell_1,\ell_2)$ is stable for $f$; we
also use the pictorial notation \[\ell_1\xrightarrow{\ov f}\ell_2\] to
indicate $(\ell_1,\ell_2)$ is stable under $f$.

We now want to show that $\mathscr C(L^2)$ is a submonoid of  $B(L)$  under the identification $f\mapsto \ov f$.  More precisely, we identify $\mathscr C(L^2)$ with the set of those binary relations on $L$ that are meet-closed (as subsets of $L\times L$).

\begin{Prop}\label{flowmonoidismonoid}
$\mathscr C(L^2)$ is a submonoid of $B(L)$ under the
identification $f\mapsto \ov f$.  That is, a multiplication can be
defined on $\mathscr C(L^2)$ by setting $fg$ to be the closure
operator associated to $\ov f\ov g$, turning $\mathscr C(L^2)$ into a monoid.
\end{Prop}
\begin{proof}
It is clear that the set of stable pairs of the identity relation $I_L$ is
meet-closed and so belongs to $\mathscr C(L^2)$ under our identification. We need to show
that if $f,g\in \mathscr C(L^2)$ then the set of stable pairs of
$\ov {fg}= \ov f\ov g$ is meet-closed.  The empty meet is $(T,T)$
where $T$ is the top of $L$.  Since the sets of stable pairs of $f$
and $g$ are meet-closed, \[T\xrightarrow{\ov f}T\xrightarrow {\ov g} T\] and so
$T\xrightarrow {\ov{fg}} T$. Suppose $\{(\ell_{\alpha},\ell'_{\alpha})\}_{\alpha\in A}$
is a non-empty subset of the set of stable pairs of $\ov {fg}$.
Then, for each $\alpha\in A$, there is an element $\ell''_{\alpha}\in L$
so that \[\ell_{\alpha}\xrightarrow {\ov f} \ell''_{\alpha}\xrightarrow {\ov
g} \ell'_{\alpha}.\]  Since the stable pairs of $f$ and $g$ are meet-closed, we then obtain \[\bigwedge _{\alpha\in A} \ell_{\alpha}\xrightarrow
{\ov f} \bigwedge_{\alpha\in A}\ell''_{\alpha} \xrightarrow {\ov g} \bigwedge
_{\alpha\in A}\ell'_{\alpha}\] showing that \[\bigwedge _{\alpha\in A}
\ell_{\alpha}\xrightarrow {\ov{fg}} \bigwedge_{\alpha\in A} \ell'_{\alpha}.\]
Since meets in $L^2$ are coordinate-wise, this completes the proof.
\end{proof}

We remark that $I_L$ has stable pairs of the form $\{(\ell,\ell)\mid
\ell\in L\}$. Hence  \[\ell_1 \flow{I_L} \ell_2 = (\ell_1\vee
\ell_2,\ell_1\vee \ell_2).\] So the multiplicative identity $I_L$ of
$\mathscr C(L^2)$ is not the identity closure operator (which has
stable set $L^2$ and is the bottom of $\mathscr C(L^2)$).

We shall call $\mathscr C(L^2)$ the \emph{abstract flow monoid} on
$L$. From its description as a submonoid of $B(L)$ we immediately
obtain:

\begin{Prop}\label{isanorderedmonoid}
$\mathscr C(L^2)$ is an ordered monoid.  That is, $f_1\leq f_2$ and
$g_1\leq g_2$ implies $f_1g_1\leq f_2g_2$.
\end{Prop}

\begin{Rmk}
It is convenient to know which binary relations come from $2$-variable closure operators from the point of view of relations on $L$ as maps $L\to P(L)$.  We observe that $f\subseteq L\times L$ is meet-closed if and only if $\ell'_{\alpha}\in \ell_{\alpha}f$, for $\alpha\in A$, implies $\bigwedge_{\alpha\in A}\ell'_{\alpha}\in \left(\bigwedge_{\alpha\in A} \ell_{\alpha}\right)f$ where view $f$ as a map $L\to P(L)$ for the moment.  This is analogous to relational morphisms.
\end{Rmk}

\subsubsection{From one-variable to two-variable closure operators and back again}
It turns out to be useful to identify $\mathscr C(L)$ with a certain submonoid of the abstract flow monoid $\mathscr C(L^2)$.  This will allow one-variable operators to act on the left and right of abstract flows by inner translations.

\begin{Prop}\label{twoasone}
Let $f\in \mathscr C(L)$.  Define $\p\colon \mathscr C(L)\to B(L)$ by $f\mapsto 1_{\Image f}$, where the latter partial identity is viewed as a binary relation.  Then:
\begin{enumerate}
\item $f\p\in \mathscr C(L^2)$;
\item $\p$ is an order-embedding;
\item The identity closure operator maps to the identity relation;
\item $f\p g\p = (f\vee g)\p$;
\item $\mathscr C(L)\p$  is an idempotent, commutative submonoid of $\mathscr C(L^2)$ isomorphic to $\mathscr C(L)$ with the join operation.
\end{enumerate}
\end{Prop}
\begin{proof}
The first three items are trivial.  For (4), $1_{\Image f}1_{\Image g} = 1_{\Image f\cap \Image g} = 1_{\Image f\vee g}$.   The final item is immediate from the previous one.
\end{proof}

From now on we identify $f\in \mathscr C(L)$ with $f\p$ and drop the latter notation. To make things more concrete, if $f\in \mathscr C(L)$ is a closure operator on $L$, then the corresponding meet-closed binary relation is \[\ov f=\{(\ell,\ell)\mid \ell f=\ell\}.\]
Proposition~\ref{twoasone} allows us to view the join-lattice $\mathscr C(L)$ as operating on the left and right of $\mathscr C(L^2)$.   The following proposition is immediate from the definitions and is stated merely for the convenience of the reader.

\begin{Prop}
Let $h\in \mathscr C(L)$ and $f\in \mathscr C(L^2)$.  Then $\ell\xrightarrow{\ov {hf}}\ell'$ if and only if $\ell h=\ell$ and $\ell\xrightarrow{\ov f}\ell'$.  Dually, $\ell\xrightarrow{\ov {fh}}\ell'$ if and only if $\ell' h=\ell'$ and $\ell\xrightarrow{\ov f}\ell'$.
\end{Prop}

Viewing flows as binary relations, if $h\in \mathscr C(L)$ and $f\in \mathscr C(L^2)$, then $\ov{hf}$ is the binary relation obtained by restricting the domain of $\ov f$ to $\Image h$.  Similarly, $\ov{fh}$ is the binary relation obtained by restricting the range of $\ov f$ to $\Image h$.

The following proposition establishes the basic properties of our
action.

\begin{Prop}\label{actionfacts}
Let $g,g_1,g_2\in \mathscr C(L)$ and $f,f'\in \mathscr C(L^2)$.
\begin{enumerate}
\item $g_1g_2f=g_2g_1f = (g_1\vee g_2)f$;
\item $fg_1g_2=fg_2g_1=f(g_1\vee g_2)$;
\item $1_Lg=g=g1_L$;
\item $f\leq g_1fg_2$.
\end{enumerate}
\end{Prop}
\begin{proof}
Proposition~\ref{twoasone} immediately yields (1), (2) and (3).  Item (4) follows from $1_L$ being the bottom of $\mathscr C(L)$ and Proposition~\ref{twoasone}.
\end{proof}

\subsubsection{Back-flow, forward-flow and star}
To any binary relation $f$ on $L$, we can associate three subsets of $L$:  the domain $\dom f$, the range $\ran f$ and the fixed point set $\fix f$ (where $\fix f=\{\ell\in L\mid (\ell,\ell)\in f\}$).  It is immediate that if $\ov f$ is the relation associated to $f\in \mathscr C(L^2)$ (and so $\ov f$ is meet-closed), then these sets are meet-closed and hence define one-variable closure operators (which we can view as two-variable closure operators in our usual way).

\begin{Prop}
Let $f\in \mathscr C(L^2)$.  Then $\dom \ov f$, $\ran \ov f$ and $\fix \ov f$ are meet-closed subsets of $L$ and hence correspond to one-variable closure operators.
\end{Prop}
\begin{proof}
Clearly, $T\in \dom \ov f$.
If $\{\ell_{\alpha}\mid \alpha\in A\}\subseteq \dom \ov f$, then we can find, for all $\alpha\in A$, an element $\ell'_{\alpha}\in L$ so that $\ell_{\alpha}\xrightarrow{\ov f}\ell'_{\alpha}$.  Then since $f$ is a closure operator on $L^2$, we have $\bigwedge_{\alpha\in A} \ell_{\alpha}\xrightarrow{\ov f}\bigwedge_{\alpha\in A} \ell'_{\alpha}$ and so $\bigwedge_{\alpha\in A}\ell_{\alpha}\in \dom \ov f$.  Similarly, $\ran \ov f$ is meet-closed.

For the fixed point set, suppose $\ell_{\alpha}\xrightarrow{\ov f}\ell_{\alpha}$ all $\alpha\in A$.  Then  \[\bigwedge_{\alpha\in A} \ell_{\alpha}\xrightarrow{\ov f}\bigwedge_{\alpha\in A} \ell_{\alpha},\] whence $\bigwedge_{\alpha\in A} \ell_{\alpha}\in \fix \ov f$, completing the proof.
\end{proof}

Because we deal with deterministic automata (but not necessarily co-deterministic automata), we have little occasion to use $\ran \ov f$.  Let us give names to the associated closure operators for the other two sets.
\begin{Def}[Back-flow and Kleene star]
Let $f\in \mathscr C(L^2)$.  Then we define the following one-variable closure operators associated to $f$:
\begin{enumerate}
\item Define $\overleftarrow{f}$ to be the closure operator on $L$ with image $\dom \ov f$.  It is called \emph{back-flow} along $f$.
\item Define $f^*$ to be the closure operator on $L$ with image $\fix \ov f$.  It is called the \emph{Kleene star} of $f$.
\end{enumerate}\index{back-flow}\index{Kleene star}
\end{Def}

The names will be motivated a little bit later when we look at the set-partition lattice and flows on automata.  For instance, Remark~\ref{explainbackflow} will motivate back-flow.

Since taking the domain and fixed-point sets are order-preserving it follows that the maps $f\mapsto \overleftarrow{f}$ and $f\mapsto f^*$ are order-preserving.  Also since $\fix \ov f\subseteq \dom \ov f$, we have $\overleftarrow f\leq f^*$.

It is sometimes convenient to work with a direct description of the back-flow closure operator. Let
$\pi_b,\pi_f\colon L^2\to L$ be the projections
\begin{align*}
(\ell,\ell')\pi_b &= \ell\\
(\ell,\ell')\pi_f &= \ell'.
\end{align*}
The letters $b$ and $f$ stand for back and front (we are thinking
in pictures $\ell\rightarrow \ell'$).  Notice that $\pi_f,\pi_b$
are complete lattice homomorphisms.

\begin{Prop}\label{stableforcheck}
Let $f\in \C(L^2)$ and $\ell\in L$.  Then
\begin{equation}\label{backflowformula}
\ell\overleftarrow{f} = \left(\ell\xrightarrow{f} B\right)\pi_b
\end{equation}
where, as usual, $B$ is the bottom of $L$.
\end{Prop}
\begin{proof}
By definition $\ell\overleftarrow{f}$ is the least element $\ell'$ in the domain of $\ov f$ such $\ell\leq \ell'$. So suppose $(\ell',\ell'')$ is a stable pair for $f$ with $\ell\leq \ell'$.  Then $(\ell,B)\leq (\ell',\ell'')$ and so $(\ell\xrightarrow{f} B)\leq (\ell',\ell'')$.  Thus the right hand side of \eqref{backflowformula} is the minimal element of $\dom \ov f$ that is above $\ell$.
\end{proof}

This proposition explains to some extent the terminology back-flow. Since $f$ is
order-preserving, $\ell\overleftarrow{f}\leq (\ell\flow f
\ell')\pi_b$ for any $\ell,\ell'\in L$. So $\ell\overleftarrow{f}$
is picking up whatever always flows back to $\ell$.  We are also interested in what must flow forward.

\begin{Def}[Forward-flow]
If $f\in \mathscr C(L^2)$ and $\ell\in L$, we define an order-preserving map $\overrightarrow{f}\colon L\to L$ by \[\ell\overrightarrow{f} = (\ell\xrightarrow{f} B)\pi_f,\] where $B$ is the
bottom of $L$.  We call $\overrightarrow{f}$ \emph{forward-flow}
along $f$.\index{forward-flow}
\end{Def}

That $\overrightarrow f$ is order-preserving follows as $\ell\leq \ell'$ implies $(\ell,B)\leq (\ell',B)$.
Notice that we use $\overleftarrow f$ for back-flow
since the arrow points backwards and $\overrightarrow f$ for
forward-flow for the opposite reason.  See Remark~\ref{explainbackflow} below for motivation.

\begin{Rmk}\label{oneastwonoflow}
If we view $f\in \mathscr C(L)$ as a two-variable closure operator, then $\dom \ov f = \Image f$ and so $f=\overleftarrow{f}$.  In fact, $\ell\xrightarrow{f} B = (\ell f,\ell f)$ and so $f=\overrightarrow{f}$, as well.
\end{Rmk}

\begin{Prop}\label{multofforwardflow}
The map $f\mapsto \overrightarrow f$ from $\mathscr C(L^2)$ to the
monoid of order-preserving maps on $L$ satisfies, for  $f,g\in
\mathscr C(L^2)$,
\begin{equation}\label{noaction}
\overrightarrow f \overrightarrow g\leq \overrightarrow{fg}
\end{equation}
where the order is taken pointwise.
It also sends the identity to the
identity.
\end{Prop}
\begin{proof}
Since $\dom I_L = L$, which in turn is the image of the identity closure operator on $L$, it follows $\overleftarrow{I_L}$ is the identity map on $L$.

 Suppose $f,g\in \mathscr C(L^2)$ and
$\ell\in L$. If $\ell\xrightarrow {fg} B = (\ell_1,\ell_2)$, then there
exists $\ell'\in L$ such that
$\ell_1\xrightarrow{\ov f}\ell'\xrightarrow {\ov g}\ell_2.$  Since $\ell\leq \ell_1$ and
$B\leq \ell'$, it follows that $\ell\xrightarrow f B\leq (\ell_1,\ell')$
and so $\ell\overrightarrow f\leq \ell'$. Because $B\leq \ell_2$, we
have $\ell'\overrightarrow g\leq \ell_2$. Thus \[\ell\overrightarrow
f \overrightarrow g\leq \ell'\overrightarrow g\leq \ell_2 =
\ell\overrightarrow{fg},\] as required.
\end{proof}

In general equality does not hold in \eqref{noaction}.  However,
in the situation that will be of primary interest to us, it will
turn out to hold.  Namely, when there is no back-flow, equality
holds as the following proposition demonstrates.

\begin{Prop}\label{nobackflow}
Suppose $\ell\in L$ and $f,g\in \mathscr C(L^2)$ are such that
$\ell\overleftarrow{f} =\ell$,
$(\ell\overrightarrow{f})\overleftarrow{g} = \ell\overrightarrow{f}$.
Then
\[\ell\overrightarrow f \overrightarrow g = \ell\overrightarrow{fg}.\]
\end{Prop}
\begin{proof}
By Proposition~\ref{multofforwardflow}, it suffices to prove that
$\ell\overrightarrow{fg}\leq \ell\overrightarrow f\overrightarrow
g$.  Let $\ell' = \ell\overrightarrow f$. By hypothesis on $\ell$
and $f$, $\ell\xrightarrow f B = (\ell,\ell')$. By hypothesis on $\ell'$
and $g$, $\ell'\xrightarrow g B = (\ell',\ell'\overrightarrow g)$. Thus
$\ell \xrightarrow{\ov {fg}} \ell'\overrightarrow g$. Hence
\[\ell\xrightarrow {fg} B\leq \ell\xrightarrow {fg} \ell'\overrightarrow g = (\ell,\ell'\overrightarrow g),\]
establishing $\ell\overrightarrow{fg}\leq \ell\overrightarrow
f \overrightarrow g$, as required.
\end{proof}

\subsection{Flow lattices}
For this section, fix a finite non-empty alphabet $X$.  We shall
need the notion of  an $X$-flow lattice.   Examples of flow
lattices arise from trying to compute complexity via the
Presentation Lemma~\cite{pl,aperiodic,qtheor}, as well as when trying to
compute pointlikes for certain pseudovarieties~\cite{Henckell,ouraperioidcpointlikes}. If $X$ is a set,
$X^*$ denotes the free monoid generated by $X$.

\begin{Def}[Flow lattice]
An \emph{$X$-flow lattice} is a complete lattice $L$ equipped with
a map $\Phi\colon X\to \mathscr C(L^2)$ or equivalently, abusing
notation, a homomorphism $\Phi\colon X^*\to \mathscr C(L^2)$. The closure operator $w\Phi$, for $w\in
X^*$, is called \emph{free flow along $w$}\index{free flow} and we denote it in
arrow notation by $\xrightarrow{w}$.\index{flow lattice}
\end{Def}

Let us give a motivating example.  Fix for the rest of the paper an $X$-generated
finite group mapping monoid $M$~\cite{Arbib,qtheor}.  That is, $M$ has a
$0$-minimal regular ideal $I$ (necessarily unique), containing a
non-trivial group, such that $M$ acts faithfully on both the left
and right of $I$.  Fix also an $\R$-class $R$ of $I$, which shall be termed the \emph{distinguished} $\R$-class of $M$.  We view
$(R,M)$ as a faithful partial transformation monoid~\cite[Chapter 4]{qtheor}.

\begin{Def}[Set flow lattice]
Take $\mathsf S(M,X)=(P(R),\subseteq)$.
 This is called the \emph{set flow
lattice} for $M$. To make $\mathsf S(M,X)$ into an $X$-flow lattice,
define $x\Phi$ by $X\xrightarrow{\ov x} Y$ if and only if $Xx\subseteq
Y$.\index{set flow lattice}\index{flow lattice!set}
\end{Def}
  In this paper, we use the convention that if $(Q,M)$ is a
partial transformation monoid or automaton and $qm$ is not
defined, then we write $qm=\emptyset$.

\begin{Prop}
$\mathsf S(M,X)$ is an $X$-flow lattice.
\end{Prop}
\begin{proof}
The top of $\mathsf S(M,X)$ is $R$ and clearly $Rx\subseteq R$ so
the set of stable pairs of $x\Phi$ is closed under empty meets. If
$Ux\subseteq Y$ and $Zx\subseteq W$, then clearly $(U\cap Z)x\subseteq Y\cap W$.  So
$\ov{x\Phi}$ is closed under finite meets.
\end{proof}

We remark that the set flow lattice does not depend on the choice
of $R$ since all $\R$-classes are isomorphic via left
multiplication. Another important example is the set-partition
flow lattice of $M$. In this paper, we do not distinguish between a partition and its associated equivalence relation.

\begin{Def}[Set-partition flow lattice]
The \emph{set-partition flow lattice} $\mathsf {SP}(M,X)$ consists
of all pairs $(Y,P)$ where $Y\subseteq R$ and $P$ is a partition
on $Y$.  This is a lattice where $(Y,P)\leq (Z,Q)$ if and only if
$Y\subseteq Z$ and $y\flow P y'$ implies $y\flow Q y'$; in other
words the inclusion of $Y$ into $Z$ induces a well-defined map
$Y/P\to Z/Q$.\index{set-partition flow lattice}\index{flow lattice!set-partition}
\end{Def}

 It is easily verified that $\mathsf {SP}(M,X)$ is a
lattice. The top is given by $(R,\{R\})$, that is, the set $R$ with
a single block for the partition.  The bottom is
$(\emptyset,\emptyset)$.  The meet is given by
\[(U,P)\wedge (Y,Q) = (U\cap Y, P\wedge Q)\] where the blocks of
$P\wedge Q$ consist of all non-empty intersections of the form
$B\cap B'$ with $B$ a block of $P$ and $B'$ a block of $Q$.  The
join is easily verified to be given by
\[(U,P)\vee (Y,Q) = (U\cup Y,P\vee Q)\] where $P\vee Q$ is the
transitive closure of $P\cup Q$ viewed as a relation on $U\cup Y$,
that is, the equivalence relation on $U\cup Y$ generated by $P$
and $Q$.  Again there is no dependence on the choice of the
$\R$-class $R$ in the definition of $\mathsf {SP}(M,X)$.

The set-partition flow lattice is used in computing complexity via
the Presentation Lemma~\cite{pl,aperiodic}.  More details will be
given in the next section.

To make $\mathsf {SP}(M,X)$ an $X$-flow lattice, we declare
\[(U,P)\xrightarrow {\ov x} (Y,Q)\] if and only if $Ux\subseteq Y$ and the partial
function $\cdot x\colon U\to Y$ induces a well-defined partial
\emph{injective} map $\cdot x\colon U/P\to Y/Q$.  This means that if
$m,n\in U$ and $mx,nx\in R$ (and hence in $Y$), then \[m\flow P n\iff mx\flow Q nx.\] In
this way we have defined $x\Phi$.

\begin{Prop}\label{setpartitionflowworks}
$\mathsf {SP}(M,X)$ is an $X$-flow lattice.
\end{Prop}
\begin{proof}
Clearly $(R,\{R\})\xrightarrow{\ov x}(R,\{R\})$.  Suppose now that
\begin{equation}\label{eqsetpartitionflowworks}
(U_1,P_1)\xrightarrow {\ov{x}}(Y_1,Q_1)\ \text{and}\
 (U_2,P_2)\xrightarrow {\ov{x}} (Y_2,Q_2).
\end{equation}
Then, as we saw above,
\[(U_1\cap U_2)x\subseteq Y_1\cap Y_2\] so it suffices to show
that \[(U_1\cap U_2)/(P_1\wedge P_2)\to (Y_1\cap Y_2)/(Q_1\wedge Q_2)\] is
 a partial injective function.   Let $m,n\in U_1\cap U_2$ and suppose $x\in X$ is such that $mx,nx\in R$. Then $m\flow{P_1\wedge P_2} n$ if and only if $m \flow {P_1} n$ and
 $m\flow {P_2} n$.  But by \eqref{eqsetpartitionflowworks} this
 occurs if and only if $mx\flow {Q_1} nx$ and $mx\flow {Q_2} nx$,
 that is, if and only if $mx \flow{Q_1\wedge Q_2} nx$.  This
 completes the proof that the set of stable pairs of $\xrightarrow{x}$ is
 meet-closed.
\end{proof}

Notice that there is a natural lattice homomorphism from $\mathsf
{SP}(M,X)$ to $\mathsf S(M,X)$ preserving the $X$-flow lattice
structure.  Here by a lattice homomorphism, we mean a map
preserving both meets and joins.

It is worth describing the closure operator $w\Phi$ for strings $w\in X^*$.
\begin{Prop}\label{closureoperforstrings}
Let $w\in X^*$.  Then $(Y,P)\xrightarrow{\ov w}(Z,Q)$ if and only if $Yw\subseteq Z$ and $r\flow P s\iff rw\flow Q sw$ for all $r,s\in Y$ with $rw,sw\in R$.
\end{Prop}
\begin{proof}
Let $w=x_1\cdots x_n$ with the $x_i\in X$.  First assume $(Y,P)\xrightarrow{\ov w}(Z,Q)$.  Then we can find $(Y_1,P_1),\ldots,(Y_{n-1},P_{n-1})$ so that  \[(Y,P)\xrightarrow{\ov x_1}(Y_1,P_1)\longrightarrow \cdots \longrightarrow (Y_{n-1},P_{n-1})\xrightarrow{\ov x_n}(Z,Q).\]  Then $Yx_1\subseteq Y_1, Y_1x_2\subseteq Y_2,\ldots, Y_{n-1}x_n\subseteq Z$.  Thus $Yw\subseteq Z$.  Also right multiplication by $w$ induces a partial injective map $Y/P\to Z/Q$, namely the composition of partial injective maps \[Y/P\xrightarrow{\cdot x_1}Y_1/P_1\longrightarrow\cdots\longrightarrow Y_{n-1}/P_{n-1}\xrightarrow{\cdot x_n}Z/Q.\]

Conversely, suppose the conditions of the proposition holds.  Set $Y_0=Y$, $Y_n=Z$ and $Y_{i+1}=Y_ix_{i+1}$ for $0\leq i\leq n-2$.  From $Yw\subseteq Z$, it follows $Y_{n-1}x_n\subseteq Z$.  Assume inductively that we have a partition $P_i$ on $Y_i$, for \mbox{$0\leq i\leq n-2$}, so that $(Y_{i-1},P_{i-1})\xrightarrow{\ov x_i}(Y_i,P_i)$ and  \[rx_{i+1}\cdots x_n\flow Q sx_{i+1}\cdots x_n\iff r\flow {P_i} s\] for $r,s\in Y_i$ and $rx_{i+1}\cdots x_n, sx_{i+1}\cdots x_n\in R$.  The base case is the hypothesis (take $P_0=P$).  For the general case, noting that $Y_{i+1} = Y_ix_{i+1}$, set $rx_{i+1}\flow {P_{i+1}} sx_{i+1}$ if and only if $r\flow {P_i} s$.  This is well defined because if $rx_{i+1}=r'x_{i+1}$, then $rx_{i+1}\cdots x_n=r'x_{i+1}\cdots x_n$ and hence $r\flow {P_i} r'$ by hypothesis.  It is then immediate from the construction that $(Y_i,P_i)\xrightarrow{\ov x_{i+1}}(Y_{i+1},P_{i+1})$.  Suppose $rx_{i+2}\cdots x_n,sx_{i+2}\cdots x_n\in R$ for $r,s\in Y_{i+1}$.  Write $r=r'x_{i+1},s=s'x_{i+1}$ with $r',s'\in Y_i$.  Then $r\flow{P_{i+1}} s$ if and only if $r'\flow {P_i}s'$, if and only if $r'x_{i+1}\cdots x_n\flow Q s'x_{i+1}\cdots x_n$, if and only if $rx_{i+2}\cdots x_n\flow Qsx_{i+2}\cdots x_n$.  This completes the induction.  By construction, we have  \[(Y,P)\xrightarrow{\ov x_1}(Y_1,P_1)\longrightarrow \cdots \longrightarrow (Y_{n-1},P_{n-1})\xrightarrow{\ov x_n}(Z,Q)\] and so $(Y,P)\xrightarrow{\ov w}(Z,Q)$, as required.
\end{proof}

\begin{Def}[Points]
By a \emph{point} of $\mathsf S(M,X)$ we mean an element of $R$ (viewed as a singleton).  By a point of $\mathsf {SP}(M,X)$ we main a pair $(\{r\},\{\{r\}\})$, which we denote simply by $(r,r)$.\index{points}
\end{Def}

A key property of points in either of the above two settings is that if $p$ is a point and $x\in X$, then $p\xrightarrow{x} B =(p,q)$ where  $q$ is either the bottom or a point.  More precisely, we have the following statement, which is immediate from the definitions.

\begin{Prop}\label{SPpointed}
Let $(r,r)$ be a point of $\mathsf {SP}(M,X)$ and let $x\in X$.  Then \[(r,r)\xrightarrow x  B = \begin{cases} (rx,rx) & rx\in R\\ B & \text{else.}\end{cases}\]
\end{Prop}

We are now in a position to explain the terminology forward-flow and back-flow.

\begin{Rmk}[Explanation of back-flow]\label{explainbackflow}
First consider the set flow lattice $\mathsf S(M,X)$. Then it is easy to see that $U\xrightarrow{x} B = (U,Ux)$ for $x\in X$ and $U\subseteq R$.  Thus $U\overleftarrow{x} = U$ and $U\overrightarrow{x}=Ux$.  That is, sets only flow forward. On the other hand, back-flow can occur for the set-partition flow
lattice.  For example, suppose that $m,n\in U$, $x\in X$ and $mx=nx\in R$.  Assume
further that $m$ and $n$ are in different blocks of the partition
$P$.  Then \[\left((U,P)\xrightarrow {\, x\,}(\emptyset,\emptyset)\right)\pi_b\] will be of the form $(U,P')$
where in $P'$ the blocks of $m$ and $n$ are joined together (and
maybe more).  Thus when one flows along $x$, there is some information flowing backwards.
\end{Rmk}

The following important proposition gives a better understanding of back-flow and forward-flow.

\begin{Prop}\label{stringsareasy}
Let $w\in X^*$ and suppose that $(Y,P)\overleftarrow{w}=(Y,P)$.  Let $P=\{B_1,\ldots,B_r\}$ where $B_1,\ldots,B_k$ are the blocks of $P$ with $B_iw\neq \emptyset$. Then $B_1w,\ldots, B_kw$ are disjoint and $(Y,P)\overrightarrow{w}=(Yw,\{B_1w,\ldots, B_kw\})$.
\end{Prop}
\begin{proof}
Let $(Y,P)\overrightarrow{w}=(Z,Q)$.  Then $(Y,P)\xrightarrow{w}(\emptyset,\emptyset) = ((Y,P),(Z,Q))$ by hypothesis.  By Proposition~\ref{closureoperforstrings}, it follows that $Yw\subseteq Z$ and $Y/P\xrightarrow{\cdot w}Z/Q$ is a partial injective map.  Consequently, $B_iw\cap B_jw\neq \emptyset$ implies $i=j$.  Setting $P'= \{B_1w,\ldots, B_kw\}$, we have $(Yw,P')\in \sp$ and $(Yw,P')\leq (Z,Q)$.  Moreover, $(Y,P)\xrightarrow{\ov w}(Yw,P')$ since $Y/P\xrightarrow{\cdot w}Yw/P'$ is trivially an injective partial map.  Thus $(Z,Q) = (Yw,P')$ as required.
\end{proof}

There is an straightforward  generalization of these flow lattices
to any $X$-generated faithful partial transformation monoid
$(Q,M)$.

\subsection{Flows on automata}
One can build new closure operators on $L^2$ via composition and Kleene star.  In fact, there is a convenient formalism, via automata,  to construct more elaborate closure operators.  Given an $X$-flow lattice, one has an immediate interpretation of any automaton over the alphabet $X$ as a closure operator.  This motivates our arrow notation and the use of the Kleene star.

Fix a lattice $L$.  By an $L$-automaton $\mathscr A=(Q,\delta)$ we
mean a finite directed graph with vertex set $Q$ whose edge set $E$  is
labelled by elements of $\mathscr C(L^2)$ via $\delta\colon E\to \mathscr C(L^2)$.
 We continue to
fix a finite alphabet $X$.  Suppose, in addition, $L$ is an $X$-flow
lattice and $\mathscr A=(Q,X)$ is an automaton (possibly
non-deterministic)~\cite{EilenbergA} over $X$ (we say an
$X$-automaton). Here $Q$ denotes the state set, while the transition
function is assumed to be understood.  All automata are assumed finite. By abusing the distinction
between elements $x\in X$ and the associated free flow $x\Phi$ operator we
may view $\mathscr A$ as an $L$-automaton.   If $\mathscr A$ is a
partial deterministic  automaton (that is, $X$ acts on $Q$ by partial functions~\cite{EilenbergA}), then the \emph{completion} $\Ac^{\square}$
of $\Ac$ is obtained by adding a sink state $\square$~\cite{EilenbergA}.  Often we will write ``partial automaton'' as an abbreviation for partial deterministic automaton.

 By convention, if we draw a finite graph
  with the vertices labelled by lattice elements from $L$ and the edges
labelled by various $\ov {f}$ with $f\in \mathscr C(L^2)$, then we
assume the lattice elements labelling the initial and terminal
vertices of each edge $e$ form a stable pair for the closure
operator labelling $e$.

We remark that many of our definitions make sense for any lattice
$L$.  Only when speaking about elements of $X$ do we need to
consider $X$-flow lattices.

\begin{Def}[Flow on an $L$-automaton]
Let $\mathscr A=(Q,\delta)$ be an $L$-automaton.  By an \emph{$L$-flow} on $\mathscr A$, or just a \emph{flow} if $L$ is understood, we mean a function
\mbox{$F\colon Q\to L$} satisfying $qF\xrightarrow{\ov {e\delta}} q'F$ for each edge $q\xrightarrow{e} q'$ of $\mathscr A$.\index{flow on an $L$-automaton}
\end{Def}

We need to consider complete set-partition flows for the case of $\sp$.

\begin{Def}[Complete flow on an automaton]
An $\mathsf {SP}(M,X)$-flow $F$ on a partial deterministic $X$-automaton $(Q,X)$ is
called a \emph{complete flow} if:
\begin{enumerate}
\item $F$ extends to an $\sp$-flow on $\mathscr A^{\square}$
via $\square F = B$;
\item $F$ is \emph{fully defined}, meaning, for each $r\in R$, there is a state $q\in Q$ such that $(r,r)\leq qF$.
\end{enumerate}\index{complete flow on an automaton}
\end{Def}

Conditions (1) and (2) are to guarantee that $F$ comes
from a relational morphism as we shall see in Section~\ref{s:present}.

For example, if $x\in X$, consider the partial automaton $\mathscr
A[x]$ given by \[q_0\xrightarrow x q_1.\]  Then
$F\colon \{q_0,q_1\}\to L$ is a flow if and only if $qF\xrightarrow {\ov
x}q'F$; that is, flows on $\mathscr A[x]$ correspond to
stable pairs for free flow along $x$.

Let $\mathscr A=(Q,\delta)$ be an $L$-automaton.  The set of flows on
$\mathscr A$ is denoted $FL(\mathscr A)$.  If $L=\sp$ and $\mathscr A$ is
a partial deterministic $X$-automaton, the set of  complete flows on $\mathscr
A$ is denoted $CFL(\mathscr A)$. We can view $FL(\mathscr A)$
and $CFL(\mathscr A)$ as subsets of the complete lattice $L^Q$ with
coordinate-wise operations.

\begin{Prop}\label{flowsareclosureops}
Let $\mathscr A=(Q,\delta)$ be an $L$-automaton. The set $FL(\mathscr
A)$ is a meet-closed subset of $L^Q$.
\end{Prop}
\begin{proof}
If $qF =T$ for all $q\in Q$, then $F$ is a flow since $T\xrightarrow{\ov
{e\delta}} T$ for all edges $e$.  This yields closure under empty meets. Let
$\{F_{\alpha}\colon Q\to L\mid \alpha\in A\}$ be a collection of flows. Then, for each edge
$q\xrightarrow {e} q'$, we have $qF_{\alpha}\xrightarrow{\ov {e\delta}}
q'F_{\alpha}$ for all $\alpha\in A$. Since $\ov{e\delta}$ is
meet-closed, we see that \[\bigwedge_{\alpha\in A} qF_{\alpha}\xrightarrow {\ov
{e\delta}} \bigwedge_{\alpha\in A} q'F_{\alpha}\] and so $\bigwedge
_{\alpha\in A}F_{\alpha}$ is a flow.
\end{proof}

It follows that $FL(\mathscr A)$ gives rise to a closure operator on
$L^Q$.  That is, if we label each vertex of $\mathscr A$ by an
element of $L$ via a function $f\colon Q\to L$, then there is a least flow
$\ov{\mathscr A}(f)\colon Q\to L$ on $\mathscr A$ such that $qf\leq
q\ov{\mathscr A}(f)$ for all $q\in Q$.  In general $CFL(\Ac)$ is
not meet-closed.  In fact, it almost never contains the empty meet.

\begin{Example}
Let us consider an example of a flow with $L$ the set
flow lattice. Suppose we have a pointed complete automaton $\Ac =
(Q,X)$ with a base point $q_0$ such that $q_0X^* = Q$.   Let $N$ be
the transition monoid of $\Ac$. Fix a base point $r_0\in R$.
Consider the smallest relational morphism~\cite{Eilenberg}
$\p\colon (R,M)\to (Q,N)$ such that $r_0$ relates $q_0$ and such that if
$m\in R$, $mx\in R$ and $m$ relates to $q$, then $mx$ relates to $qx$. That is, \[r\p = \{q_0\cdot w\mid w\in X^*\
\text{and}\ r=r_0\cdot w\}.\]  This is a fully-defined relation
since $\Ac$ is complete.  Define a function \mbox{$f\colon Q\to P(R)$}
that assigns $\{r_0\}$ to $q_0$ and $\emptyset$ to every other
vertex. Then $q\ov{\mathscr A}(f) = q\pinv$.
\end{Example}

In the same context, if we use the set-partition flow lattice,
then in addition to computing the relational morphism $\p$, we
will be computing the partitions giving rise to the minimal
injective automaton congruence on the derived transformation
semigroup~\cite{Eilenberg} (viewed as an automaton) of $\p$.   More details will
follow in the next section.

If $\Ac = (Q,\delta)$ is an $L$-automaton and $q\neq q'\in Q$, we can
obtain a new abstract flow on $L$ by \emph{sampling} at the states
$q,q'$.

\begin{Def}[Sampling at two states]\label{sample2}
Let $\Ac=(Q,\delta)$ be an $L$-au\-to\-ma\-ton and $q\neq q'\in Q$. The
abstract flow $\Ac(q,q')\in \mathscr C(L^2)$ is defined as follows.
Let $\ell,\ell'\in L$.  Define $f\colon Q\to L$ by $qf = \ell$, $q'f =
\ell'$ and by sending all other states to the bottom $B$ of $L$.
Then we define
\[\ell\xrightarrow {\Ac(q,q')} \ell' = (q \ov{\mathscr A}(f),q'\ov{\mathscr A}(f))\]
to be the result of sampling $\mathscr A$ at $q,q'$.\index{sampling at two states}
\end{Def}
In other words we consider all flows that are greater than or equal
to $\ell$ at $q$ and to $\ell'$ at $q'$, take their meet and then
sample the values at $q$ and $q'$.

\begin{Prop}\label{sampleisclosure}
$\Ac(q,q')$ is a closure operator.  Moreover, if $\ell,\ell'\in L$,
then $\ell\xrightarrow {\ov{\Ac(q,q')}}\ell'$ if and only if there is a
flow $F$ on $\Ac$ such that $qF = \ell$ and $q'F = \ell'$.
\end{Prop}
\begin{proof}
It is straightforward to verify that $\Ac(q,q')$ is order-preserving and increasing.  To see that it is idempotent, suppose that $\ell_1 \flow{\Ac(q,q')} \ell_2 = (\ell_1',\ell_2')$.  Define $f\colon Q\to L$ by $qf = \ell_1$, $q'f =
\ell_2$ and by sending all other states to the bottom $B$ of $L$; let $f'$ be defined analogously but with $qf'=\ell_1'$ and $q'f'=\ell_2'$.  Then $f\leq f'\leq \ov{\mathscr A}(f)$ and so $\ov{\mathscr A}(f)=\ov{\mathscr A}(f')$ and hence $\ell_1' \flow{\Ac(q,q')} \ell_2' = (\ell_1',\ell_2')$.  Thus $\Ac(q,q')$ is a closure operator.

We next prove the second statement, describing the image $\ov {\Ac(q,q')}$ of $\Ac(q,q')$. Let $\ell,\ell'\in L$ and
define the map \mbox{$f\colon Q\to L$} as in Definition~\ref{sample2}.   By
definition of $\Ac(q,q')$, if $\ell\xrightarrow {\ov{\Ac(q,q')}}\ell'$,
then
\begin{align*}
\ell &=q\ov{\mathscr A}(f)\\
\ell' &= q'\ov{\mathscr A}(f)
\end{align*}
Conversely, if $F$ is a flow with $qF=\ell$, $q'F=\ell'$, then $f\leq
F$ and therefore $\ov{\mathscr A}(f)\leq F$.  Thus
\begin{align*}
\ell &= qf \leq q\ov{\mathscr A}(f) \leq qF =\ell\\
\ell' &= q'f \leq q'\ov{\mathscr A}(f) \leq q'F =\ell'
\end{align*}
establishing the second statement.
%
\end{proof}

As an example, let $x\in X$ and consider the automaton \[\Ac[x] =
q_0\xrightarrow x q_1.\]  It follows directly from the definition that the
closure operator $\Ac[x](q_0,q_1)$ is free flow along $x$ (that is
the operator $\xrightarrow x$).  Let us generalize this to free flow along
a word $w\in X^*$.  Suppose $w=x_1\cdots\ x_n$ and let \[\Ac[w] =
q_0\xrightarrow {x_1} q_1\xrightarrow {x_2}\cdots \xrightarrow {x_n} q_{n}.\]  Then
the closure operator $\Ac[w](q_0,q_n)$ is free flow along $w$.  This
follows from the following more general result.

\begin{Prop}\label{freeflowbysample}
Let $f_1,\cdots,f_n\in \mathscr C(L^2)$ and $f=f_1\cdots f_n$.
Consider the $L$-automaton
\begin{equation*}
\Ac [f_1,\ldots, f_n]=q_0\xrightarrow {f_1}q_1\xrightarrow {f_2}\cdots \xrightarrow
{f_n} q_{n}.
\end{equation*}
Then $\Ac [f_1,\ldots, f_n](q_0,q_n) = f$.
\end{Prop}
\begin{proof}
Observe that, by Proposition~\ref{sampleisclosure}, $\ell\xrightarrow
{\ov{\Ac[f_1,\ldots,f_n](q_0,q_n)}} \ell'$ if and only if there is a flow $F$ on
$\Ac[f_1,\ldots, f_n]$ with $q_0F = \ell$ and $q_nF = \ell'$.  This occurs if and
only if we can choose a function $F\colon Q\to L$ so that $q_0F = \ell$,
$q_nF =\ell'$ and \[q_{i-1}F\xrightarrow {\ov {f}_i} q_iF\] for all $i$. But
since $f$ is the product $f_1\cdots f_n$, it follows that such a
function $F$ exists if and only if $\ell\xrightarrow {\ov {f}} \ell'$.
\end{proof}

As another example, let $M$ and $N$ be $X$-generated monoids and put $L =P(M)$.  Define an
$X$-flow lattice structure on $L$ by $U\xrightarrow{\ov x} Y$ if and
only if $Ux\subseteq Y$.  Let $\Ac$ be the right Cayley graph of
$N$. Let $n\in N$. Then, for $1\neq n\in N$,
\[\{1\}\xrightarrow{\Ac(1,n)} \emptyset = (1\pinv,n\pinv)\] where
$\p\colon M\to N$ is the canonical relational morphism respecting the
generators $X$, that is, the relational morphism whose graph is
the submonoid generated by the image of the diagonal map $X\to
M\times N$.

We can also get a one-variable closure operator by sampling at a
state.

\begin{Def}[Sampling at a state]
Let $\Ac=(Q,\delta)$ be an $L$-automaton and $q\in Q$. Then an element
$\Ac(q)\in \mathscr C(L)$  can be defined as follows. Let $\ell\in
L$. Define $f\colon Q\to L$ by $qf = \ell$ and by sending all other states
to $B$.  Then we define $\ell\Ac(q) = q\ov{\mathscr A}(f)$.\index{sampling at one state}
\end{Def}

One can verify that $\Ac(q)$ is a closure operator in a similar
fashion to Proposition~\ref{sampleisclosure}.  Many of our one-variable closure operators can be interpreted via sampling, as the following proposition, whose proof is merely unwinding the definitions, shows.

\begin{Prop}
Let $f\in \mathscr C(L^2)$.  Then:
\begin{enumerate}
\item $\overleftarrow f = \Ac[f](q_0)$ where $\Ac[f] = q_0\xrightarrow{f}q_1$;
\item $f^* =\mathscr A[f^*](q)$ where $\mathscr A[f^*] = q\looop f$.
\end{enumerate}
\end{Prop}

This proposition should explain the intuition behind the names back-flow and the Kleene star.  Let's give further motivation for the star notation via an example. Let $w\in X^*$.  Let $M$ be a finite $X$-generated group
mapping monoid with distinguished $\R$-class $R$ and consider the
set flow lattice on $R$.  Let $U\in P(R)$.  We claim that, for $w\in X^*$, \[U\looop
{w} = Uw^*\] where $w^*$ is the submonoid generated by the image of
$w$ in $M$. Indeed, \[U\looop w\] is, by definition, the least subset
$Y$ containing $U$ such that $Yw\subseteq Y$.  But this is exactly
$Uw^*$. Intuitively, the one-variable operator \[\looop w\] is obtained by taking the automaton
$\Ac[w]$, identifying $q_0$ with $q_n$ and then sampling at $q_0$.
Notice that the language of the resulting automaton is $w^*$.

If $f,g\in \mathscr C(L^2)$, we can define an $L$-automaton
\[\xymatrix{q_0\ar[r]^f &q_1\ar@(r,u)_g}.\]  The two variable closure operator
obtained by sampling with respect to $(q_0,q_1)$ is none other than $fg^*$.  Unwinding
the definition we see that, for $\ell,\ell'\in L$, we have $\ell\flow{\ov{fg^*}} \ell'$, drawn \[\ell\plstar {\ov f}{\ov g}\ell',\] if and only if $\ell\xrightarrow {\ov f}\ell'$ and
$\ell'\xrightarrow {\ov g}\ell'$. The picture indicates that you flow from
$\ell$ to $\ell'$ via $f$ and then flow in a loop from $\ell'$ to
$\ell'$ along $g$.

For instance, consider set flows on a finite $X$-generated group
mapping monoid $M$ with distinguished $\R$-class $R$.  Let
$U,Y\subseteq R$ and $x,y\in X$.  Then $U\xrightarrow{xy^*} Y = (U,(Ux\cup
Y)y^*)$.

\begin{Prop}\label{cheapbound}
Let $f,g\in \mathscr C(L^2)$.  Then:
\begin{enumerate}
\item $f\leq f\overleftarrow g\leq fg^*$;
\item For all $k\geq 0$, $g^k\leq \bigvee_{m\geq 0} g^m\leq g^*$;
\item $f\leq \bigvee_{k\geq 0}(fg^k)\leq f\left(\bigvee_{k\geq 0} g^k\right)\leq fg^*$;
\item The map $g\mapsto g^*$ is a closure operator on $\mathscr C(L^2)$ and in particular $(g^*)^*=g^*$;
\item $(f\vee g)^*=f^*g^*=f^*\vee g^*=g^*f^*$;
\item $(fg^*)^*=f^*\vee g^*=(f^*g)^*$.
\end{enumerate}
\end{Prop}
\begin{proof}
Proposition~\ref{actionfacts} establishes the first inequality of (1).  The second follows since $\overleftarrow{g}\leq g^*$. The first inequality of (2) is clear.  For the second, a stable pair for $g^*$ looks like $(\ell,\ell)$ where $\ell\xrightarrow{\ov g}\ell$.  But then $\ell\xrightarrow{\ov {g^m}}\ell$ for all $m\geq 0$ and hence $(\ell,\ell)$ is a stable pair for $\bigvee_{m\geq 0}g^m$.
Item (3) is an immediate consequence of (2) and the fact that $\mathscr C(L^2)$ is an ordered monoid. The fourth item is trivial.

For (5), note that $1_{\fix {\ov f}}1_{\fix {\ov g}}=1_{\fix{\ov f}\cap \fix{\ov{g}}} = 1_{\fix{\ov{f\vee g}}}$ and so $f^*g^*=(f\vee g)^*$.  The remaining equalities follow from Proposition~\ref{twoasone}.  Finally, for (6) we have by the previous parts that \[(fg^*)^*\leq (f^*g^*)^* = ((f\vee g)^*)^*=(f\vee g)^*=f^*\vee g^*.\]  Conversely, suppose that $\ell\in \fix {\ov{fg^*}}$.  Then \[\ell \plstar{\ov f}{\ov g}\ell\] and hence $\ell\in \fix {\ov f}\cap \fix {\ov g}$.  Thus $f^*\vee g^*\leq (fg^*)^*$ completing the proof of the first equality.  The second is dual.
\end{proof}

We remark that the inequalities of the proposition are in general
strict.

\subsubsection{Finite lattices}
Assume now that $L$ is a finite lattice.  Then $\mathscr C(L^2)$ is a finite monoid.
 If $s$ is an element of a profinite semigroup, then $s^{\omega}$ denotes the unique idempotent in $\ov{\langle s\rangle}$. For $k\geq 0$, $s^{\omega+k} = s^{\omega}s^k$.
For $k<0$, we denote by $s^{\omega-k}$ the inverse of $s^{\omega+k}$
in the procyclic group $\ov{\langle s^{\omega+1}\rangle}$.

\begin{Def}[$()^{\omega+\ast}$]
If $L$ is a finite lattice, and $f\in \mathscr C(L^2)$, then we set
\begin{equation}\label{defomega+star}
f^{\omega+\ast}=f^{\omega}f^*.
\end{equation}
In other words, $(\ell,\ell')$ is stable for $f^{\omega+\ast}$ if and only if
\[\ell \plstar {\ov {f^{\omega}}}{\ov f}\ell',\] if and only if $\ell\xrightarrow{\ov {f^{\omega}}}\ell'\xrightarrow{\ov f}\ell'$.\index{$()^{\omega+\ast}$}
\end{Def}

\begin{Rmk}
Notice that $f^{\omega}\leq f^{\omega+\ast}$ by Proposition~\ref{actionfacts}.
\end{Rmk}

We establish a few basic properties of $f^{\omst}$.

\begin{Prop}\label{gomega+*isidempotent}
Let $g\in \mathscr C(L^2)$.  Then $g^{\omega+\ast}$ and $g^{\omst}
g^{\omega}$ are $\R$-equivalent idempotents.  Moreover, $g^{\omst}
g^{\omega}\leq g^{\omst}$.
\end{Prop}
\begin{proof}
Since $g^{\omega}\leq g^*$ by Proposition~\ref{cheapbound}, evidently $g^{\omst}g^{\omega}\leq  g^{\omega}g^*g^* = g^{\omega}g^*=g^{\omst}$, establishing the second statement.

To prove the first statement we compute \[g^{\omst} = g^{\omega}g^{\omst}\leq g^{\omst}g^{\omst}=g^{\omst}g^{\omega}g^*\leq g^{\omst}g^*=g^{\omst}.\]  Therefore, $g^{\omst}g^{\omega}g^{\omst} = g^{\omst}g^{\omst}=g^{\omst}$ and $g^{\omst}g^{\omst}g^{\omega} = g^{\omst}g^{\omega}$, whence $g^{\omst}\R g^{\omst}g^{\omega}$.  Also, $g^{\omst}g^{\omega}g^{\omst}g^{\omega} = g^{\omst}g^{\omega}$.
\end{proof}

The following lemma shows that $g^{\omega+\ast}$ and its dual absorb $g$.

\begin{Lemma}\label{rhodestrickiness}
Let $L$ be a finite lattice and $g\in \mathscr C(L^2)$.  Then
$gg^{\omega+\ast} = g^{\omst}$ and dually $g^*g^{\omega}g=g^*g^{\omega}$. Hence
\begin{equation}\label{absorb}
g^{\omega+\ast}g^{\omega}g =
g^{\omega+\ast}g^{\omega}=gg^{\omega+\ast}g^{\omega}.
\end{equation}
\end{Lemma}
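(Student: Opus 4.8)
The plan is to prove the identity $gg^{\omega+\ast}=g^{\omega+\ast}$ first, and then obtain the dual identity $g^*g^{\omega}g=g^*g^{\omega}$ by the left-right symmetry of the constructions, with \eqref{absorb} following formally from the two. Recall that $g^{\omega+\ast}=g^{\omega}g^*$ by \eqref{defomega+star}. So $gg^{\omega+\ast}=gg^{\omega}g^*$. Now I would use that $g^{\omega}$ is the idempotent power of $g$, so that $gg^{\omega}=g^{\omega}g=g^{\omega+1}$, and that $\ov{\langle g^{\omega+1}\rangle}$ is a (pro)cyclic group with identity $g^{\omega}$. The key observation is that $g^{\omega+1}$ is $\R$-equivalent (in fact $\HH$-equivalent) to $g^{\omega}$ inside that group, so that $g^{\omega+1}g^*$ and $g^{\omega}g^*$ ought to coincide once we know how $g^*$ interacts with left multiplication by elements $\HH$-below $g^{\omega}$. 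I expect the cleanest route is to observe that $g\leq g^*$ fails in general but $g^{\omega}\leq g^*$ holds (Proposition~\ref{cheapbound}(2)), and therefore to work instead directly with stable pairs.

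More concretely, here is the step-by-step argument I would carry out for $gg^{\omega+\ast}=g^{\omega+\ast}$. One inequality is free: $g^{\omega+\ast}=g^{\omega}g^*\leq gg^{\omega}g^*=gg^{\omega+\ast}$ since $g^{\omega}\leq gg^{\omega}=g^{\omega+1}$ (as $\mathscr C(L^2)$ is an ordered monoid and $g^{\omega}$ is below $g^{\omega+1}$ — this uses that $g^{\omega}$, being an idempotent, satisfies $g^{\omega}=g^{\omega}g^{\omega}\le g^{\omega+1}$... actually I should be careful here: I need $g^{\omega}\le g^{\omega+1}$, which would follow if $1_L\le g$, but that is false; rather I should argue $g^{\omega}\le gg^{\omega}$ because $g^{\omega}=g^{\omega}g^{\omega}$ and then... this needs $g^{\omega-1}\le g^{\omega}$, which holds since $g^{\omega-1}=g^{\omega}g^{\omega-1}\le g^{\omega}$ iff... ). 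Let me instead take the honest route through stable pairs. For the reverse inequality $gg^{\omega+\ast}\leq g^{\omega+\ast}$, by \eqref{reversal} it suffices to show every stable pair of $g^{\omega+\ast}$ is a stable pair of $gg^{\omega+\ast}=gg^{\omega}g^*$. Suppose $\ell\xrightarrow{\ov{g^{\omega}}}\ell'\xrightarrow{\ov g}\ell'$, which is the general form of a stable pair of $g^{\omega+\ast}$ by the Definition of $()^{\omega+\ast}$. Since $g^{\omega}$ is idempotent, $g^{\omega}=g\cdot g^{\omega-1}\cdot g^{\omega}\cdots$; more usefully, $g^{\omega}=g^{\omega}g^{\omega}$, so there exists $\ell''$ with $\ell\xrightarrow{\ov{g^{\omega}}}\ell''\xrightarrow{\ov{g^{\omega}}}\ell'$. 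Iterating, and using that $g^{\omega}=(g^{\omega})^n$ for all $n\geq 1$ while $g^{\omega+1}g^{\omega-1}=g^{\omega}$ as well, I would produce a factorization $\ell\xrightarrow{\ov g}\ell_1\xrightarrow{\ov{g^{k}}}\ell'$ for suitable $k$ with $g^{k}$ below $g^{\omega}$ in the $\R$-order, and then splice in the loop $\ell'\xrightarrow{\ov g}\ell'$ to close up. The cleanest packaging: write $g^{\omega}=g g^{\omega-1}g^{\omega}$ (valid since $g^{\omega-1}g^{\omega}=g^{\omega-1+\omega}=g^{\omega-1}$ is the inverse of $g^{\omega+1}$ times $g^{\omega}$... i.e. $g g^{\omega-1}g^{\omega}=g^{\omega+1}g^{\omega-1}g^{\omega}=g^{\omega}g^{\omega}=g^{\omega}$), so that any stable pair $\ell\xrightarrow{\ov{g^{\omega}}}\ell'$ of $g^{\omega}$ factors through $g$, giving $\ell\xrightarrow{\ov g}\ell_1\xrightarrow{\ov{g^{\omega-1}g^{\omega}}}\ell'$, and hence the same pair with the loop appended is stable for $gg^{\omega-1}g^{\omega}g^*$. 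But $g^{\omega-1}g^{\omega}g^*\le g^{\omega}g^*=g^{\omega+\ast}$ might go the wrong way; instead note $g^{\omega-1}g^{\omega}g^{\omega}=g^{\omega-1}g^{\omega}$ so one more multiplication by $g^{\omega}$ recovers what we want, and combined with $g^{\omega}\le g^*$ (Proposition~\ref{cheapbound}) we land inside $gg^{\omega}g^*$.

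Honestly, the slick proof the authors likely intend is purely equational: compute $gg^{\omega+\ast}=gg^{\omega}g^*=g^{\omega+1}g^*$, and since $g^{\omega+1}$ and $g^{\omega}$ are $\HH$-equivalent in the procyclic group $\ov{\langle g^{\omega+1}\rangle}$ one has $g^{\omega+1}=g^{\omega+1}g^{\omega}$ and $g^{\omega}=g^{\omega-1}g^{\omega+1}$; therefore $g^{\omega+1}g^*=g^{\omega+1}g^{\omega}g^*=g^{\omega+1}g^{\omega}g^*$... this is circular unless I know $g^{\omega+1}g^*=g^{\omega}g^*$ directly. The genuinely clean identity to invoke is $g^{\omega}g^*=g^{\omega+\ast}=(g^{\omega})g^*$ together with the fact, provable from Proposition~\ref{cheapbound}(5)--(6), that $g^*=(g^{\omega})^*$ is irrelevant — rather $\fix\ov{g}=\fix\ov{g^{\omega}}$? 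No. I will instead argue at the level of stable pairs as sketched: a pair $(\ell,\ell')$ is stable for $g^{\omega+\ast}$ iff $\ell\xrightarrow{\ov{g^{\omega}}}\ell'$ and $\ell'\xrightarrow{\ov g}\ell'$; using $g^{\omega}=g^{\omega+1}g^{\omega-1}$ hence $g^{\omega}=g\cdot(g^{\omega}g^{\omega-1})$ and $g^{\omega}g^{\omega-1}=g^{\omega-1}$, we get $g^{\omega}=gg^{\omega-1}$, so $\ell\xrightarrow{\ov{g^{\omega}}}\ell'$ factors as $\ell\xrightarrow{\ov g}\ell_1\xrightarrow{\ov{g^{\omega-1}}}\ell'$; since $\ell'\xrightarrow{\ov g}\ell'$ gives $\ell'\xrightarrow{\ov{g^{\omega-1}}}\ell'$ and in fact $\ell'\xrightarrow{\ov{g^{\omega}}}\ell'$, we may rewrite the tail as $\ell_1\xrightarrow{\ov{g^{\omega-1}}}\ell'\xrightarrow{\ov{g^{\omega}}}\ell'$, i.e. $\ell_1\xrightarrow{\ov{g^{\omega-1}g^{\omega}}}\ell'=\ell_1\xrightarrow{\ov{g^{\omega-1}}}\ell'$, and then appending the loop $\ell'\xrightarrow{\ov g}\ell'$ shows $(\ell,\ell')$ is stable for $g\cdot g^{\omega-1}g^{\omega}\cdot g^*$; since $g^{\omega-1}g^{\omega}=g^{\omega-1}$ and $g^{\omega-1}\ge$ nothing useful — at this point one multiplies on the left by $g^{\omega}$ using $g^{\omega}g=g^{\omega+1}$ and $g^{\omega}gg^{\omega-1}=g^{\omega}g^{\omega}=g^{\omega}$... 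I will organize these finitely many rewrites to land exactly on $g^{\omega}g^*=g^{\omega+\ast}$, proving $gg^{\omega+\ast}\ge g^{\omega+\ast}$ as sets of stable pairs, hence $\le$ as closure operators, and with the trivial reverse inequality we conclude $gg^{\omega+\ast}=g^{\omega+\ast}$. The dual $g^*g^{\omega}g=g^*g^{\omega}$ follows by the symmetric argument (interchanging $\pi_b,\pi_f$ and domains with ranges), and then \eqref{absorb} is pure monoid algebra: $g^{\omega+\ast}g^{\omega}g=g^{\omega}g^*g^{\omega}g=g^{\omega}(g^*g^{\omega}g)=g^{\omega}g^*g^{\omega}=g^{\omega+\ast}g^{\omega}$, and $gg^{\omega+\ast}g^{\omega}=(gg^{\omega+\ast})g^{\omega}=g^{\omega+\ast}g^{\omega}$.

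\textbf{Main obstacle.} The delicate point is the claim $g^{\omega}=gg^{\omega-1}$ in $\mathscr C(L^2)$ and, more generally, keeping track of which rewrites are legal: $\mathscr C(L^2)$ is only an \emph{ordered} monoid, not a group, so I cannot freely cancel, and $g\not\le 1_L$ in general, so naive domination arguments fail. The safe currency is \eqref{reversal} — inclusions of stable-pair sets — and the procyclic-group structure of $\ov{\langle g^{\omega+1}\rangle}$, which does give honest equalities like $g^{\omega}g^{\omega-1}=g^{\omega-1}$, $g^{\omega+1}g^{\omega-1}=g^{\omega}$, $g^{\omega}g^{\omega}=g^{\omega}$. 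I expect the bulk of the write-up to be the careful bookkeeping that every intermediate expression either equals or lies above/below the target in the correct direction, so that the chain of rewrites actually proves the asserted equality rather than a one-sided bound.
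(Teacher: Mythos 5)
Your deduction of \eqref{absorb} from the two absorption identities and your appeal to duality for $g^*g^{\omega}g=g^*g^{\omega}$ are fine, but neither half of the main identity $gg^{\omst}=g^{\omst}$ is actually established. For the direction $g^{\omst}\le gg^{\omst}$ the only argument you offer is $g^{\omega}\le gg^{\omega}=g^{\omega+1}$; you rightly flag mid-sentence that this would need something like $1_L\le g$, which is false, but you never repair it and at the end wave it through as ``the trivial reverse inequality''. It is not merely unjustified: $g^{\omega}\le g^{\omega+1}$ fails in general. Take $\ov g$ to be the graph of the automorphism of the four-element Boolean lattice swapping its two atoms; this is a meet-closed relation, $g^2=I$, so $g^{\omega}=I$ and $g^{\omega+1}=g$, and $I\not\le g$ since $\ov g$ is not contained in the diagonal. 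The paper gets this direction from facts you had in hand but discarded: Proposition~\ref{cheapbound}(2) gives $g^k\le g^*$ for \emph{all} $k\ge 0$ (so your assertion that ``$g\le g^*$ fails in general'' is wrong, and also $g^{\omega-1}\le g^*$), whence, using $g^*g^*=g^*$ and commutation of powers of $g$, one has $g^{\omst}=g^{\omega+1}g^{\omega-1}g^*\le g^{\omega+1}g^*g^*=gg^{\omega}g^*=gg^{\omst}$.

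Your stable-pair argument for the other direction $gg^{\omst}\le g^{\omst}$ has the right ingredients but is never closed: after factoring $\ell\xrightarrow{\ov{g^{\omega}}}\ell'$ as $\ell\xrightarrow{\ov g}\ell_1\xrightarrow{\ov{g^{\omega-1}}}\ell'$ you compose the tail with $\ov{g^{\omega}}$, which only returns $g^{\omega-1}g^{\omega}=g^{\omega-1}$, and you end with a promise to ``organize these finitely many rewrites''. The missing step is a single composition the other way: appending the loop $\ell'\xrightarrow{\ov g}\ell'$ to $\ell_1\xrightarrow{\ov{g^{\omega-1}}}\ell'$ gives $\ell_1\xrightarrow{\ov{g^{\omega-1}g}}\ell'=\ell_1\xrightarrow{\ov{g^{\omega}}}\ell'$, so $(\ell_1,\ell')\in\ov{g^{\omst}}$ and hence $(\ell,\ell')\in\ov g\,\ov{g^{\omst}}=\ov{gg^{\omst}}$, which is what \eqref{reversal} requires. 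For comparison, the paper's proof is a short equational sandwich in the ordered monoid: $gg^{\omst}=g^{\omega}gg^*\le g^{\omega}g^*g^*=g^{\omst}=g^{\omega+1}g^{\omega-1}g^*\le g^{\omega+1}g^*g^*=gg^{\omega}g^*=gg^{\omst}$, with \eqref{absorb} then immediate --- exactly the route you abandoned because of the mistaken belief that $g\not\le g^*$.
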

\begin{proof}
Equation \eqref{absorb} is an immediate consequence of the first part of the lemma.  We just prove $gg^{\omst} = g^{\omst}$ as the other equality is dual.  Indeed, Proposition~\ref{cheapbound} yields
\begin{align*}
gg^{\omst}&=gg^{\omega}g^* = g^{\omega}gg^* \\
& \leq g^{\omega}g^*g^*=g^{\omst} = g^{\omega+1}g^{\omega-1}g^*\\
&\leq g^{\omega+1}g^*g^* = gg^{\omega}g^* = gg^{\omst}
\end{align*}
as required.
\end{proof}

When trying to establish the companion upper bound to the lower bound introduced in this paper, it will often be necessary to work with a ``conjugated'' version of $f^{\omst}$.

\begin{Thm}[Conjugated star]
Let $f,g\in \mathscr C(L^2)$ and consider the $L$-automaton $\mathscr A$:
\[\xymatrix{q_0\ar[r]^{(fg)^{\omega}}& q_1\ar@/_/[r]_f &\ar@/_/[l]_gq_2}.\]
Then $\mathscr A(q_0,q_2) = f(gf)^{\omst}$.
\end{Thm}
\begin{proof}
Suppose first that $(\ell,\ell')$ is stable for $\mathscr A(q_0,q_2)$.  Then by Proposition~\ref{sampleisclosure} we can find $\ell''\in L$ so that \[\xymatrix{\ell\ar[r]^{\ov{(fg)^{\omega}}}& \ell''\ar@/_/[r]_{\ov f} &\ar@/_/[l]_{\ov g}\ell'}\]
and hence we can find $\ell_0,\ell_1\in L$ such that \[\ell\xrightarrow{\ov f}\ell_0\xrightarrow{\ov g}\ell_1\xrightarrow{\ov{(fg)^{\omega-1}}}\ell''\xrightarrow{\ov f}\ell'\xrightarrow{\ov {gf}}\ell'.\]
Composing, we obtain \[\xymatrix{\ell\ar[r]^{\ov f}&\ell_0\ar[r]^{\ov{(gf)^{\omega}}}&\ell'\ar@(r,u)_{\ov{gf}}}\]
and so $\ell\flow{\ov{f(gf)^{\omst}}}\ell'$, as required.

Conversely, assume $(\ell,\ell')$ is stable for $f(gf)^{\omst}$.  Lemma~\ref{rhodestrickiness} yields $f(gf)^{\omst}=f(gf)^{\omega-1}(gf)^{\omst}$ and so we can find $\ell_1,\ell_2\in L$ such that
\[\ell\xrightarrow{\ov {f(gf)^{\omega-1}}}\ell_1\xrightarrow{\ov {(gf)^{\omega}}}\ell'\xrightarrow{\ov g}\ell_2\xrightarrow{\ov f}\ell'.\] Because $f(gf)^{\omega-1}(gf)^{\omega}g=(fg)^{\omega}$,  we conclude
\[\xymatrix{\ell\ar[r]^{\ov{(fg)^{\omega}}}& \ell_2\ar@/_/[r]_{\ov f} &\ar@/_/[l]_{\ov g}\ell'}\]
and so $\ell\xrightarrow{\ov{\mathscr A(q_0,q_2)}}\ell'$, again by Proposition~\ref{sampleisclosure}, completing the proof.
\end{proof}

\section{The Presentation Lemma: Flow Form}\label{s:present}
The main tool for dealing with complexity is the Presentation
Lemma.  We shall use the version of~\cite[Section 4.14]{qtheor} (see also~\cite{aperiodic}), rather than that of~\cite{pl}.  The key difference is that~\cite{pl} views $R$ as $G\times B$ where $G$ is the maximal subgroup of $R$ and $B$ is the set of $\H$-classes of $R$ and uses the Dowling lattice (which was invented by the second author in 1968 before Dowling, but only published much later in~\cite{pl}) instead of the set-partition lattice.   The goal of this section is to prove the following result, where $\malce$ denotes the Mal'cev product and $\mathsf{RLM}(M)$ is the right letter mapping image of $M$ (see below or~\cite{qtheor}).

\begin{Thm*}[Presentation Lemma: Flow form]
Let $M$ be a finite $X$-generated group mapping monoid with
distinguished $\R$-class $R$.  Let \pv V be a pseudovariety.  Then
$M\in \pv A\malce (\pv G\ast \pv V)$ if and only if $\mathsf{RLM}(M)\in \pv
A\malce (\pv G\ast \pv V)$ and there exists a complete $\mathsf
{SP}(M,X)$-flow $F$ on a partial automaton $\Ac$
over $X$ with transition monoid in $\pv V$ such that if $r,s$ are in the same
block of $qF$ for some state $q$ and $r\H s$, then $r=s$.
\end{Thm*}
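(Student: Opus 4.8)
The plan is to reduce the statement to the classical Presentation Lemma in the form of \cite[Section~4.14]{qtheor} (see also \cite{aperiodic}), which is exactly the version the paper says it will invoke. That lemma characterizes, for a finite $X$-generated group mapping monoid $M$ with distinguished $\R$-class $R$ and structure group $G$, membership in $\pv A\malce(\pv G\ast\pv V)$: granting $\mathsf{RLM}(M)\in\pv A\malce(\pv G\ast\pv V)$, one has $M\in\pv A\malce(\pv G\ast\pv V)$ if and only if there is a fully-defined relational morphism of partial transformation monoids $\rho\colon(R,M)\to(Q,N)$ with $N\in\pv V$, together with an $N$-invariant family of partitions $P_q$ on the columns $Y_q=\{r\in R\mid q\in r\rho\}$, such that right translation by each generator induces \emph{partial injective} maps $Y_q/P_q\to Y_{q'}/P_{q'}$ and no block of any $P_q$ contains two distinct $\H$-equivalent points. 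The substance of the proof is the observation that such a datum is literally the same thing as a complete $\mathsf{SP}(M,X)$-flow $F$ on a partial $X$-automaton $\Ac$ with transition monoid in $\pv V$ subject to the stated $\H$-condition. I would isolate this equivalence as a lemma, after which the theorem is immediate.

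At the level of the set lattice, I would check that a map $F\colon Q\to\mathsf{SP}(M,X)$ is a flow exactly when its first coordinate $q\mapsto Y_q$ is a flow for $\mathsf S(M,X)$, i.e. $Y_qx\subseteq Y_{qx}$ whenever $qx$ is defined; that clause (1) of ``complete flow'' ($\square F=B$) is precisely what forces the overflow out of $R$ --- the points $r\in Y_q$ with $rx=\emptyset$, by our convention --- to be routed to the sink, which is what makes $q\mapsto Y_q$ arise from an \emph{honest} relational morphism of the partial transformation monoid $(R,M)$ rather than merely a relation on $R\cup\{\emptyset\}$; and that clause (2) (fully defined) says this relational morphism is fully defined. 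Tracing $X$-words forward from the points, as in the example following the definition of complete flow, recovers $\rho$, and $N$, the transition monoid of $\Ac$, lies in $\pv V$ by hypothesis; conversely any such relational morphism visibly produces a complete set flow.

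Adding the partition coordinate, Proposition~\ref{closureoperforstrings} unwinds the flow condition along an edge $q\xrightarrow{x}q'$ into exactly ``$Y_qx\subseteq Y_{q'}$ and $\cdot x$ induces a partial injective map $Y_q/P_q\to Y_{q'}/P_{q'}$'', so the family $\{P_q\}$ is precisely an injective automaton congruence on the derived transformation semigroup of $\rho$, the least flow above the points computing the minimal such congruence (as indicated right after that example). The hypothesis that $r\H s$ with $r,s$ in a common block of some $qF$ forces $r=s$ is then verbatim the requirement that every block of every $P_q$ be transverse to the $\H$-classes of $R$, which is the $\H$-separation condition of the classical Presentation Lemma (morally: the group coordinate inside a column is tracked by $G\in\pv G$, the column itself by $N\in\pv V$, and an aperiodic semigroup can undo the $P_q$-merging only when no column coordinate is confused, i.e. only under transversality). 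The $\mathsf{RLM}(M)$ hypothesis is carried across unchanged. Running these identifications in both directions proves the lemma, and with it the theorem.

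I expect the principal obstacle to be the first identification above: showing that clauses (1)--(2) of ``complete flow'' are equivalent to $q\mapsto Y_q$ coming from a fully-defined relational morphism of the \emph{partial} transformation monoid $(R,M)$ --- so that the sink $\square$ is handled compatibly with the convention $rm=\emptyset$ when $rm\notin R$. This is the step that justifies the word ``complete'' and is the one I would write out in full. A secondary bookkeeping issue is the group pseudovariety: depending on the exact phrasing of \cite[Section~4.14]{qtheor}, one may first have to use that $M$ is group mapping to replace a relational morphism whose image lies in $\pv G\ast\pv V$ by one whose transition monoid lies merely in $\pv V$, absorbing the group part into the structure group $G$; if that reduction is not already built into the cited version, it --- rather than the dictionary --- becomes the crux. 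The remaining verifications, namely the meet-closedness checks and the passage between automata over $X$ and the submonoids of $N$ generated by the images of $X$, are routine.
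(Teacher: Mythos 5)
Your proposal takes essentially the same route as the paper: it reduces the theorem to the classical Presentation Lemma (the paper's Theorem~\ref{preslem}) via a dictionary identifying complete $\mathsf{SP}(M,X)$-flows satisfying the $\H$-condition with parameterized relational morphisms $(R,M)\to(Q,N)$, $N\in\pv V$, equipped with admissible (injective, $\H$-separating) partitions on the derived automaton --- which is exactly the paper's Proposition~\ref{correspondence}, including your correct reading of the role of the sink condition $\square F=B$ and of fully-definedness. Your secondary worry about absorbing a $\pv G$-factor is moot, since the cited version of the Presentation Lemma is already phrased with the presentation over $\pv V$ itself.
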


The aim of this section is to show that the statement of the above theorem is equivalent to the Presentation Lemma as stated in~\cite[Theorem 4.14.19]{qtheor}.  The reader who is willing to accept
this as a fact may skip ahead to Theorem~\ref{flowform}.

In this paper, we mean by a transformation semigroup a faithful
partial transformation semigroup as per~\cite{Eilenberg}.   A
\emph{relational morphism} of partial transformation semigroups \mbox{$\p\colon (R,M)\to (Q,N)$}  is a fully defined
relation $\p\colon R\to Q$ such that, for all $m\in M$, there exists
$\til m\in N$ so that
\begin{equation}\label{cover2}
q\pinv m\subseteq (q\til m)\pinv
\end{equation}
for all $q\in Q$.  One says in this case that $\til m$
\emph{covers} $m$. There is a \emph{companion relational morphism}
$\p^c\colon M\to N$ defined by
\[m\p^c = \{n\in N\mid n\ \text{covers}\ m\}.\]

A \emph{parameterized relational morphism}\index{parameterized relational morphism} of partial transformation semigroups $\Phi\colon (R,M)\to (Q,N)$
is a pair $(\p_0,\p_1)$ where $\p_0\colon (R,M)\to (Q,N)$ and $\p_1\colon M\to
N$ are relational morphisms such that $\p_1\subseteq \p_0^c$, that
is, each $n\in m\p_1$ covers $m$.  Suppose that $M$ and $N$ are
both $X$-generated.  Then the parameterized relational morphism is
termed \emph{canonical} if $\p_1$ is the relational morphism whose
graph is generated by all pairs of the form $([x]_M,[x]_N)$ with
$x\in X$.  Here we use the convention that if $M$ is an
$X$-generated monoid and $w\in X^*$, then $[w]_M$ is the image of
$w\in M$.  Sometimes, we just write $w$ if $M$ is understood.

Let $\Phi\colon (R,M)\to (Q,N)$ be a parameterized relational morphism.
We shall need the following partial automaton, denoted
$\D_{\Phi}$, which is in fact the \emph{derived transformation
semigroup} of $\Phi$~\cite{Eilenberg} (without the empty function)
viewed as an automaton. The state set of $\D_{\Phi}$ is
\[\# \p_0 = \{(r,q)\mid q\in r\p_0\}.\]  The transitions
are of the form \[(r,q)\xrightarrow {(q,(m,n))} (rm,qn)\] where $n\in
m\p_1$ and $rm\in R$.

By an \emph{automaton congruence}\index{automaton congruence} on a partial automaton $\mathscr
A = (Q,X)$ we mean an equivalence relation $\equiv$ on $Q$ such
that $q\equiv q'$ and $qx,q'x\in Q$ implies $qx\equiv q'x$, for
$q,q'\in Q$, $x\in X$. The quotient automaton $\mathscr
A/{\equiv}$ has state set $Q/{\equiv}$ and input alphabet $X$.
There is a transition $[q]\xrightarrow x [q']$ if and only if there are
$q_0,q_0'\in Q$ with $q_0\in [q]$, $q_0'\in [q']$ and $q_0\xrightarrow
x q_0'$.  The automaton congruence is called \emph{injective} if
$qx,q'x\in Q$ and $qx\equiv q'x$ implies $q\equiv q'$, that is,
the transitions of the quotient automaton $\mathscr A/{\equiv}$
are partial one-to-one.

An automaton congruence, and its associated partition, on $\D_{\Phi}$ is called \emph{admissible}
if it is injective and \[(r,q)\equiv (r',q')\implies q=q'\]
for $r,r'\in R$ and $q,q'\in Q$.

Suppose now that $M$ is a finite group mapping monoid with
distinguished $\R$-class $R$.  Let $B$ be the set of $\mathscr L$-classes
of $M$.  Then $M$ acts by partial transformations on $B$ via right
multiplication, resulting in a transformation semigroup
$(B,\mathsf{RLM}(M))$.  Following~\cite{Arbib,qtheor}, $\mathsf{RLM}(M)$ is called the
\emph{right letter mapping semigroup of $M$}\index{right letter mapping}.  Since $R$ contains a
non-trivial group, $\mathsf{RLM}(M)$ is always a proper image of $M$~\cite{Arbib,qtheor,pl}.

\begin{Def}[Presentation] Let \pv V be a pseudovariety of monoids. Then a
\emph{presentation} for $(R,M)$ over $\pv V$ is a pair
$(\Phi,\mathscr P)$ where $\Phi\colon (R,M)\to (Q,N)$ is a parameterized
relational morphism, $N\in \pv V$ and $\mathscr P$ is an admissible
partition on $\D_{\Phi}$ such that
\[(r,q)\mathrel{\mathscr P} (s,q)\ \text{and}\ r\H s\implies r=s\] for $r,s\in R$ and $q\in Q$.\index{presentation}
\end{Def}

The following result is the Presentation Lemma~\cite[Theorem 4.14.19]{qtheor}, originally due to the second author~\cite{pl}.
Recall that \pv A denotes the pseudovariety of aperiodic monoids
and \pv G denotes the pseudovariety of finite groups.  If \pv V
and \pv W are pseudovarieties, $\pv V\ast \pv W$ denotes their
semidirect product~\cite{Eilenberg,qtheor} and $\pv V\malce \pv
W$ their Malcev product~\cite{HMPR,qtheor}.

\begin{Thm}[Presentation Lemma]\label{preslem}
Let $M$ be a finite group mapping monoid and \pv V be a
pseudovariety. Then $M\in \pv A\malce (\pv G\ast \pv V)$ if and
only if $\mathsf{RLM}(M)\in \pv A\malce (\pv G\ast \pv V)$ and $(R,M)$ has
a presentation over $\pv V$ where $R$ is the distinguished
$\R$-class of $M$.
\end{Thm}

Let $\pv C_n$ denote the pseudovariety of monoids of complexity at
most $n$~\cite{Eilenberg,qtheor}.  The Fundamental Lemma of Complexity~\cite{flc,TilsonXII} shows that \[\pv C_n = \pv A\malce (\pv G\ast
\pv C_{n-1}).\]   It is also a well-known consequence of the
Fundamental Lemma of Complexity that the decidability of
complexity reduces to the case of group mapping monoids; see the
discussion in~\cite{pl}. The above theorem, with $\pv V = \pv
C_{n-1}$, shows that decidability of complexity $n$ reduces to the
decidability of the existence of presentations over $\pv C_{n-1}$
for group mapping monoids.  Here we are using the fact that we can
assume by induction on order that membership of $\mathsf{RLM}(M)$ in $\pv C_n$ can
already be determined.

There is also a stronger version of Theorem~\ref{preslem}.  Recall
that if $M$ is a monoid and $\pv V$ is a pseudovariety, then a
subset $A\subseteq M$ is called \emph{$\pv V$-pointlike} if, for all
relational morphisms $\p\colon M\to N$ with $N\in \pv V$, there exists
$n\in N$ with $A\subseteq n\p\inv$~\cite{qtheor}.  The Presentation Lemma for pointlikes is due to the third author~\cite{aperiodic} and is~\cite[Theorem 4.14.20]{qtheor}.

\begin{Thm}[$\pv G\ast \pv V$-pointlikes]\label{pointlikes}
Let $M$ be a finite group mapping monoid with distinguished
$\R$-class $R$.  Let $\pv V$ be a pseudovariety.  Then $A\subseteq
R$ is $\pv G\ast \pv V$-pointlike if and only if, for every
parameterized relational morphism $\Phi\colon (R,M)\to (Q,N)$ with $N\in
\pv V$ and every admissible partition $\mathscr P$ on
$\D_{\Phi}$, there exists $q\in Q$ such that $A\subseteq
q\p_0\inv$ and $A\times \{q\}$ is contained in a single block of
$\mathscr P$.
\end{Thm}

We now wish to show how to go between set-partition flows and
parameterized relational morphisms with admissible
partitions on their derived automata.

\begin{Prop}\label{correspondence}
Let $M$ be an $X$-generated group mapping monoid with distinguished $\R$-class $R$.
\begin{enumerate}
\item Suppose $\mathscr A=(Q,X)$ is a partial automaton with transition monoid $N$ and let $F\colon Q\to \sp$ be a complete flow on $\mathscr A$.  Then there exist a parameterized relational morphism $\Phi\colon (R,M)\to (Q,N)$ and an admissible partition $\Pc$ on $\D_{\Phi}$ so that if $qF=(A,P)$, then $q\p_0\inv = A$ and $(r,q)\Pc (r',q)$, for $r,r'\in A$, if and only if $r\flow{P} r'$.
\item If $\Phi\colon (R,M)\to (Q,N)$ is a parameterized relational morphism and $\Pc$ is an admissible partition on $\D_{\Phi}$, then we can find a partial automaton $\mathscr A=(Q,X)$ with transition monoid a submonoid of $N$ and a complete flow $F$ on $\mathscr A$ so that, for all $q\in Q$, $qF=(q\p_0\inv, P_q)$ where $r\flow{P_q} r'$ if and only if $(r,q)\Pc (r',q)$.
\end{enumerate}
\end{Prop}
\begin{proof}
We begin with (1).  Define a canonical parameterized relational morphism $\Phi\colon (R,M)\to (Q,N)$ by putting $q\p_0\inv=A$ where $qF=(A,P)$.
To see that $\p_0$ is fully defined, let $r\in R$.  Since $F$ is a complete flow, there exists $q$ with $(r,r)\leq qF$.  Then $r\in q\p_0\inv$.  Next, we show that $[x]_N$ covers $[x]_M$.  Indeed, suppose $q\in Q$ and $x\in X$.  Let $qF=(A,P)$.    Assume first that $qx=\emptyset$.  Then since $F$ is a complete flow, we must have $Ax=\emptyset$.  Thus $q\p_0\inv [x]_M = Ax=\emptyset\subseteq q[x]_N\p_0\inv$.  Next suppose $qx\neq\emptyset$.  Let $(qx)F = (A',P')$.  Then, since $F$ is a flow, $Ax\subseteq A'$.  Hence $q\p_0\inv [x]_M = Ax\subseteq A'= (q[x]_N)\p_0\inv$.  Thus $[x]_N$ covers $[x]_M$, from which it follows by an easy induction that $[w]_N$ covers $[w]_M$ all $w\in X^*$.  So $\Phi$ is indeed a canonical parameterized relational morphism.

Next define a partition $\Pc$ on $\D_{\Phi}$ by setting $(r,q)\Pc (r',q)$ if $qF=(A,P)$ with $r,r'\in A$ and $r\flow {P} r'$.
Our goal is to verify that $\Pc$  is an admissible partition.  It is immediate that $\Pc$ is a partition. To see  it is an automaton congruence, we prove by induction on length that if $w\in X^*$ and \mbox{$(r,q)\Pc (r',q)$} are such that $(r,q)(q,([w]_M,[w]_N))$ and $(r',q)(q,([w]_M,[w]_N))$ are defined, then $(r[w]_M,q[w]_N)\Pc (r'[w]_M,q[w]_N)$.  This is trivial if $|w|=0$.  Suppose it is true for all words of length at most $n$ and suppose $w=ux$ with $|u|=n$ and $x\in X$.   By induction, $(r[u]_M,q[u]_N)\Pc (r'[u]_M,q[u]_N)$.     Suppose $quF =(A,P)$ and $qwF=quxF=(A',P')$.  Then $r[u]_M\flow{P} r'[u]_M$.   Since $F$ is a flow, $Ax\subseteq A'$, whence $r[u]_Mx,r'[u]_Mx\in A'$ and $r[u]_Mx\flow{P'}r'[u]_Mx$.  Thus $(r[w]_M,q[w]_N)\Pc (r'[w]_M,q[w]_N)$ and so $\Pc$ is an automaton congruence.

To see that $\Pc$ is injective, we establish by induction on length that if
$w\in X^*$, $(q_0,([w]_M,[w]_N))$ is defined on $(r,q), (r',q')$ and
\[(r,q)(q_0,([w]_M,[w]_N))\Pc (r',q')(q_0,([w]_M,[w]_N)),\] then $(r,q)\Pc (r',q')$. First note that we must have
$q=q_0=q'$.  So if $|w|=0$, there is nothing to prove.  Suppose the claim is true for all words of length
at most $n$ and consider $w=ux$ with $x\in X$ and $|u|=n$.  Set $p=qu$.  Then we have
\begin{align*}
(r[u]_M,p)(p,([x]_M,[x]_N)) &= (r,q)(q_0,([w]_M,[w]_N))\\
&\Pc (r',q')(q_0,([w]_M,[w]_N))\\ &=(r'[u]_M,p)(p,([x]_M,[x]_N)).
\end{align*}
Let $(A,P) = pF$ and $(A',P')=pxF$.  As $F$ is a complete flow, $Ax\subseteq A'$ and
\begin{equation}\label{ispartinj}
A/P\xrightarrow{\,\cdot x\,}A'/P'\ \text{is partial injective}.
\end{equation}
Thus $r[ux]_M,r'[ux]_M\in A'$.  Because \mbox{$(r[w]_M,q[w]_N)\Pc (r'[w]_M,q'[w]_N)$}, it follows from the definition that $r[ux]_M=r[w]_M\flow{P'} r'[w]_M=r'[ux]_M$.  Therefore, $r[u]_M\flow{P} r'[u]_M$ by \eqref{ispartinj} and hence $(r[u]_M,p)\Pc (r'[u]_M,p)$.  Induction now yields $(r,q)\Pc (r',q')$, as required.  So $\Pc$ is an injective automaton congruence.  It is admissible directly from the definition, establishing (1).

For (2), suppose that $\Phi=(\p_0,\p_1)$. Fix, for each $x\in X$ an element $n_x\in [x]_M\p_1$.  Define a partial deterministic automaton $\mathscr A$ with state set $Q$ and transitions given by
$q\xrightarrow{x} qn_x$; the transition monoid of $\mathscr A$ is \[\langle n_x\mid x\in X\rangle\leq N.\]
Define a flow $F\colon Q\to \sp$ by setting $qF=(q\p_0\inv, P_q)$ where $r\flow{P_q} r'$ if and only if $(r,q)\Pc (r',q)$. To see that $F$ is fully-defined, observe that if $r\in R$, then there exists $q\in Q$ so that $r\in q\p_0\inv$.  Then $(r,r)\leq qF$.  Suppose that $qx=\emptyset$ and let $(A,P)=qF$.  Then \[Ax=q\p_0\inv x\subseteq qn_x\p_0\inv =qx\p_0\inv =\emptyset\] and so $Ax=\emptyset$, whence $qF\xrightarrow{\ov x}B$.  Thus $F$ is complete.  It remains to verify that $F$ is a flow.  Suppose that $q\xrightarrow{x}qx$ is a transition.  Then we have
\begin{equation}\label{randomeq}
q\p_0\inv x\subseteq qn_x\p_0\inv = qx\p_0\inv
\end{equation}
since $n_x\in [x]_M\p_1\inv$ and $\Phi$ is a parameterized relational morphism.

To ease notation set $A=q\p_0\inv$ and $A'=qx\p_0\inv$.  Then \eqref{randomeq} implies that $Ax\subseteq A'$.
Let $\pi\colon \# \p_0\to R$ and $\rho\colon \D_{\Phi}\to
\D_{\Phi}/{\Pc}$ be the projections.  There then results the commutative diagram in Figure~\ref{acomdiagram}
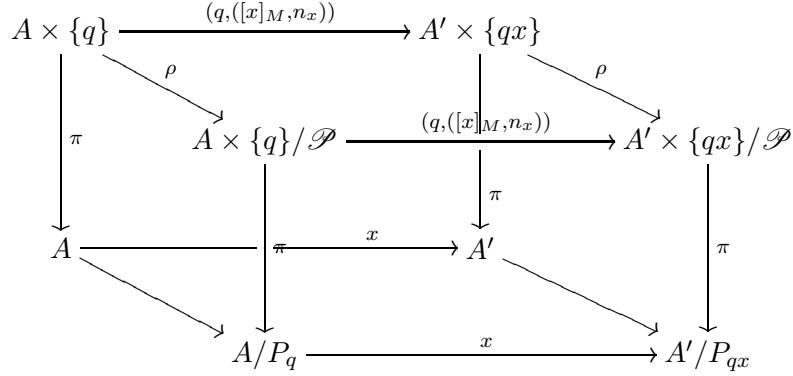
\begin{figure}[h!b!p!t!]
\SelectTips{eu}{12}
\[ \xymatrix{ A\times\{q\} \ar[dd]^{\pi} \ar[rd]^{\rho} \ar[rr]^{(q,([x]_M,n_x))} && A'\times \{qx\} \ar'[d][dd]^{\pi} \ar[rd]^{\rho} \\
& A\times\{q\}/{\Pc} \ar[dd]^{\pi} \ar[rr]^{(q,([x]_M,n_x))} && A'\times \{qx\}/{\Pc} \ar[dd]^{\pi} \\
A \ar'[r][rr]^{x} \ar[rd] && A' \ar[rd] \\
& A/P_q \ar[rr]^{x} && A'/{P_{qx}} }
\]
\caption{A commutative diagram showing $F$ is a flow\label{acomdiagram}}
\end{figure}
where the vertical lines are bijections and the unlabelled arrows are the projections.  Since $\Pc$ is an injective automaton congruence, it now follows that $A/P_q\xrightarrow{\cdot x}A'/P_{qx}$ is a partial injective map.  This completes the proof that $F$ is a flow thereby establishing (2).
\end{proof}

The above proposition easily leads to the following two theorems, which are restatements of
Theorems~\ref{preslem} and~\ref{pointlikes} in the language of flows.

\begin{Thm}[Presentation Lemma: Flow form]\label{flowform}
Let $M$ be a finite $X$-generated group mapping monoid with
distinguished $\R$-class $R$.  Let \pv V be a pseudovariety.  Then
$M\in \pv A\malce (\pv G\ast \pv V)$ if and only if \[\mathsf{RLM}(M)\in \pv
A\malce (\pv G\ast \pv V)\] and there exists a complete $\mathsf
{SP}(M,X)$-flow $F$ on a partial automaton $\Ac$
over $X$ with transition monoid in $\pv V$ such that if $(\{r,s\},\{\{r,s\}\})\leq qF$ for some state $q$ (i.e., $r,s$ belong to the same block of $qf$) and $r\H s$, then $r=s$.
\end{Thm}
\begin{proof}
Suppose first that such a flow $F$ exists.  Then $(\Phi,\Pc)$ constructed in Proposition~\ref{correspondence}(1) is a presentation over $\pv V$.  Conversely, if $(\Phi,\Pc)$ is a presentation over $\pv V$, then Proposition~\ref{correspondence}(2) defines the desired flow.  The result now follows from Theorem~\ref{preslem}.
\end{proof}

The next theorem can be deduced from Theorem~\ref{pointlikes} in an analogous fashion; we omit the proof.

\begin{Thm}[$\pv G\ast \pv V$-pointlikes: Flow form]\label{flowpoint}
Let $M$ be a finite $X$-generated group mapping monoid with
distinguished $\R$-class $R$.  Let \pv V be a pseudovariety.  Then
$A\subseteq R$ is $\pv G\ast \pv V$-pointlike if and only if, for
every complete $\mathsf {SP}(M,X)$-flow on a partial automaton $\Ac$
over $X$ with transition monoid in $\pv V$, there is a state $q$
such that $(A,\{A\})\leq qf$, i.e., $A$ is contained in a single block of $qf$.
\end{Thm}

So computing membership in $\pv C_n$ amounts to studying
set-partition flows on automata with transition monoid in $\pv
C_{n-1}$.

\subsection{Inevitable set-partitions}
In this section, we define the notion of an inevitable set-partition with respect to a pseudovariety $\pv V$. Throughout this section, we put $L=\sp$.

\begin{Def}[\pv V-inevitability]
Let \pv V be a pseudovariety. Then an element $\ell\in L$ is said to be \emph{\pv
V-inevitable} if, for all complete flows $F$ on a partial
automaton $\Ac=(Q,X)$ with transition monoid in $\pv V$, there is a
state $q\in Q$ such that $\ell\leq qF$.\index{$\pv V$-inevitability}
\end{Def}

  Notice the set of $\pv V$-inevitable elements of $L$
is an order ideal.  That is, if $\ell'\leq \ell$ and $\ell$ is $\pv
V$-inevitable, then so is $\ell'$.  Also notice that the points of $L$
are $\pv V$-inevitable by definition of a complete flow.  The importance of this notion comes from the following reformulation of Theorem~\ref{flowpoint}.

\begin{Cor}\label{pointlike-flowform}
Let $M$ be a finite $X$-generated group mapping monoid with
distinguished $\R$-class $R$.  Let \pv V be a pseudovariety.  Then
$A\subseteq R$ is \mbox{$\pv G\ast \pv V$}-pointlike if and only if there
is a $\pv V$-inevitable element \mbox{$(Y,P)\in \mathsf {SP}(L)$} such
that $A\subseteq Y$ and $A$ is contained in a block of $P$, i.e., $(A,\{A\})\leq (Y,P)$.
\end{Cor}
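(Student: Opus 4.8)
The plan is to unwind the definition of $\pv V$-inevitability and to recognize the corollary as a direct repackaging of Theorem~\ref{flowpoint}; assume $A\neq\emptyset$, the case $A=\emptyset$ being trivial on both sides. First I would record the elementary observation that for $(Z,Q)\in L$ one has $(A,\{A\})\leq (Z,Q)$ if and only if $A\subseteq Z$ and $A$ is contained in a single block of $Q$. This is immediate from the definition of the order on $\sp$, since the unique block of the partition $\{A\}$ is $A$ itself.

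Combining this observation with the definition of $\pv V$-inevitability, the assertion ``$(A,\{A\})$ is $\pv V$-inevitable'' says exactly that for every complete $\sp$-flow $F$ on a partial automaton $\Ac=(Q,X)$ with transition monoid in $\pv V$ there is a state $q\in Q$ with $A$ contained in a single block of $qF$. By Theorem~\ref{flowpoint} this holds if and only if $A$ is $\pv G\ast\pv V$-pointlike, so
\[
A\text{ is }\pv G\ast\pv V\text{-pointlike}\iff (A,\{A\})\text{ is }\pv V\text{-inevitable}.
\]
It then remains only to see that $(A,\{A\})$ being $\pv V$-inevitable is equivalent to the existence of \emph{some} $\pv V$-inevitable $(Y,P)\in L$ with $(A,\{A\})\leq (Y,P)$: the forward direction is trivial (take $(Y,P)=(A,\{A\})$), and the reverse direction is precisely the fact, noted just before the corollary, that the $\pv V$-inevitable elements of $L$ form an order ideal. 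Chaining the two equivalences proves the corollary.

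There is essentially no real obstacle; the only items needing (minor) care are the translation between ``contained in a single block'' and the lattice order on $\sp$, and the degenerate case $A=\emptyset$, both of which are routine.
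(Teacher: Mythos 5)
Your proof is correct and follows the paper's own argument exactly: identify $\pv G\ast\pv V$-pointlikeness of $A$ with $\pv V$-inevitability of $(A,\{A\})$ via Theorem~\ref{flowpoint}, then pass to a general $\pv V$-inevitable $(Y,P)$ above $(A,\{A\})$ using the order-ideal property of the $\pv V$-inevitable elements. The extra remarks on the lattice order and the empty-set case are harmless routine details.
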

\begin{proof}
Theorem~\ref{flowpoint} says exactly that $A$ is $\pv G\ast \pv V$-pointlike if and only if $(A,\{A\})$ is $\pv V$-inevitable.  Since the set of $\pv V$-inevitable set-partitions is an order ideal, this completes the proof.
\end{proof}

 The following standard compactness result
is called the ``Little Boxes Theorem'' by the second author (the boxes
refer to the blocks of the partition in a set-partition
flow).

\begin{Thm}[Little Boxes Theorem]\label{littleboxes}
Let \pv V be a pseudovariety. Then there is a partial
automaton $\Ac=(Q,X)$ with transition monoid in $\pv V$ and a
complete flow $F$ on $\Ac$ such that $\ell\in L$ is $\pv
V$-inevitable if and only if $\ell\leq qF$ for some $q\in Q$.
\end{Thm}
\begin{proof}
Since $L$ is finite, the set $W$ of elements of $L$ that are not
$\pv V$-inevitable is finite.  For each $\ell\in W$, we can find a
partial automaton $\Ac_{\ell}=(Q_\ell,X)$ with transition monoid in
$\pv V$ and a complete flow $F_\ell$ on $\Ac_{\ell}$ such that
$\ell\nleq qF_{\ell}$ for all $q\in Q$. Let $\Ac=\prod_{\ell\in W}
\Ac_{\ell}=(Q,X)$. Then $\Ac$ has transition monoid in $\pv V$.
Define $F\colon Q\to L$ by
\begin{equation}\label{definelittlebox}
(q_\ell)F = \bigwedge _{\ell\in W} q_{\ell}F_{\ell}.\end{equation}
Suppose that $(q_\ell)\xrightarrow x (q'_\ell)$ is an edge of $\Ac$. Then,
for each $\ell\in L$, $q_\ell\xrightarrow x {q'_\ell}$.  So
$q_{\ell}F_\ell\xrightarrow {\ov x} q_\ell'F_\ell$ for all $\ell\in W$.  Since
$\xrightarrow {\ov x}$ is meet-closed, it follows $(q_\ell)F\xrightarrow {\ov x}
(q'_\ell)F$ and hence $F$ is a flow.

We must now check that $F$ is a complete flow.  To see that $F$ is
fully defined, let $r\in R$.  Then, for each $\ell\in W$,
there is a state $q_\ell$ with $(r,r)\leq q_{\ell}F_\ell$ since the $F_\ell$
are complete flows. Hence
\[(r,r)\leq \bigwedge _{\ell\in W} q_{\ell}F_{\ell} = (q_\ell)F,\]
establishing that $F$ is fully defined.

 To see that $F$ extends to $\Ac^{\square}$, we must show that if $(q_\ell)x$ is not defined, then
$(q_\ell)F\xrightarrow {\ov x} B$. But if $(q_\ell)x$ is not defined, then
$q_{\ell'}x$ is undefined for some $\ell'\in L$.  Hence
\begin{equation}\label{fl2}
q_{\ell'} F_{\ell'}\xrightarrow {\ov x} B
\end{equation}
since $F_{\ell'}$ is a complete flow.   Suppose $q_{\ell'}F_{\ell'}=(Y',P')$ and $(q_{\ell})F = (Y,P)$.  From the definitions, we have $(Y,P)\leq (Y',P')$.  But $Y'x=\emptyset$ by \eqref{fl2} and so $Yx=\emptyset$.  Therefore, $(q_{\ell})F\xrightarrow{\ov x} B$, establishing that $F$ is a complete flow.

To see that $F$ has the desired property, we must show that if
$\ell\in W$, then $\ell\nleq qF$ for all $q\in Q$.  So suppose $\ell\in
W$ and that, by way of contradiction, $\ell\leq (q_{\ell'})F$.  Then
we have $\ell\leq q_{\ell'}F_{\ell'}$ for all $\ell'\in W$ by
\eqref{definelittlebox}. In particular, $\ell\leq q_{\ell}F_\ell$, a
contradiction to the choice of $F_\ell$.  It follows that $F$ has
the desired property, establishing the theorem.
\end{proof}

Combining Theorems~\ref{flowform} and~\ref{littleboxes}, one
easily deduces the following corollary.

\begin{Cor}\label{presentation-flowform-inev}
Let $M$ be a finite $X$-generated group mapping monoid with
distinguished $\R$-class $R$ and let \pv V be a pseudovariety of monoids.  Then
$M\in \pv A\malce(\pv G\ast \pv V)$ if and only if $\mathsf{RLM}(M)\in \pv A\malce (\pv G\ast \pv V)$ and there
are no $\pv V$-inevitable elements of the form $(\{r,s\},\{\{r,s\}\})$ with $r\H s$ and $r\neq s\in R$.
\end{Cor}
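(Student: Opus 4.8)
The plan is to obtain the corollary as a formal consequence of Theorem~\ref{flowform} (the flow form of the Presentation Lemma) and Theorem~\ref{littleboxes} (the Little Boxes Theorem). Since the clause ``$\mathsf{RLM}(M)\in \pv A\malce(\pv G\ast \pv V)$'' appears unchanged on both sides of the desired equivalence, it suffices to prove that, with $L=\sp$ as above, the following two conditions are equivalent:
\begin{enumerate}
\item there exists a complete flow $F$ on a partial automaton $\Ac$ over $X$ with transition monoid in $\pv V$ such that $(\{r,s\},\{\{r,s\}\})\leq qF$ for some state $q$ and $r\H s$ imply $r=s$;
\item there is no $\pv V$-inevitable element of $L$ of the form $(\{r,s\},\{\{r,s\}\})$ with $r\H s$ and $r\neq s$.
\end{enumerate}
Condition (1) is precisely the second requirement in Theorem~\ref{flowform}, and (2) is the second requirement of the corollary, so once the equivalence is established one conjoins each side with ``$\mathsf{RLM}(M)\in \pv A\malce(\pv G\ast \pv V)$'' and applies Theorem~\ref{flowform} to conclude.

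For the implication (1)$\Rightarrow$(2), I would argue by contradiction. Suppose $F$ on $\Ac$ witnesses (1) and that $(\{r,s\},\{\{r,s\}\})$ is $\pv V$-inevitable for some $r\H s$ with $r\neq s$. Because $F$ is a complete flow on a partial automaton whose transition monoid lies in $\pv V$, the definition of $\pv V$-inevitability yields a state $q$ of $\Ac$ with $(\{r,s\},\{\{r,s\}\})\leq qF$. Then $r$ and $s$ lie in a single block of $qF$, so (1) forces $r=s$, a contradiction. Hence no such inevitable element exists.

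For the implication (2)$\Rightarrow$(1), I would invoke the Little Boxes Theorem to produce a single partial automaton $\Ac_0=(Q_0,X)$ with transition monoid in $\pv V$ and a complete flow $F_0$ on $\Ac_0$ such that an element $\ell\in L$ is $\pv V$-inevitable if and only if $\ell\leq qF_0$ for some $q\in Q_0$. I then claim that $F_0$ already witnesses (1): whenever $(\{r,s\},\{\{r,s\}\})\leq qF_0$ with $r\H s$, the defining property of $F_0$ shows $(\{r,s\},\{\{r,s\}\})$ is $\pv V$-inevitable, and then (2) forces $r=s$. This establishes the equivalence.

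The argument is short, and the one place where something substantive happens is the implication (2)$\Rightarrow$(1): condition (2) only asserts, for each ``bad'' pair separately, the existence of a complete flow detecting its non-inevitability, whereas (1) demands a single complete flow that simultaneously separates every bad pair. Bridging this gap is exactly the content of the Little Boxes Theorem (a compactness statement, available since $L$ is finite), so that result carries the weight of the proof while everything else is a direct unwinding of the definition of $\pv V$-inevitability.
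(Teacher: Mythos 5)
Your proposal is correct and follows essentially the same route as the paper: one direction uses the Little Boxes Theorem to produce a single complete flow witnessing the separation condition of Theorem~\ref{flowform}, and the other direction unwinds the definition of $\pv V$-inevitability to show an inevitable bad pair obstructs every such flow. The only cosmetic difference is that you isolate the equivalence of the two ``second clauses'' before conjoining with the $\mathsf{RLM}(M)$ condition, which the paper does implicitly.
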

\begin{proof}  Suppose first that there are no $\pv V$-inevitable elements of the form $(\{r,s\},\{\{r,s\}\})$ with $r\H s $ and $r\neq s\in R$.  Then the flow provided in the Little Boxes Theorem satisfies the conditions of Theorem~\ref{flowform}.  Conversely, if there is a $\pv V$-inevitable element of the form $(\{r,s\},\{\{r,s\}\})$ with $r\H s$ and $r\neq s$, then no flow satisfying the conditions of Theorem~\ref{flowform} can exist and therefore $M\notin \pv A\malce (\pv G\ast \pv V)$ by Theorem~\ref{flowform}.
\end{proof}

\section{The Flow Monoid}
In this section, we provide the tools for constructing an
effective lower bound for complexity.  The lower bound will
be established in the next section.  Again fix the notation $L=\sp$.  We write $I$ for $I_L$.

\subsection{Loopable elements}
To describe our lower bound, we need the notion of an $n$-loopable element of a monoid, which is defined inductively.  First we need some definitions from~\cite{lowerbounds2}; complete details can be found in~\cite[Section 4.12]{qtheor}.  Denote by $\pv R$ the pseudovariety of $\R$-trivial semigroups and by $\pv {ER}$ the pseudovariety of semigroups whose idempotents generate an $\R$-trivial semigroup.  Stiffler proved $\pv{ER}=\pv R\ast \pv G$~\cite{Stiffler,Almeida:book,Eilenberg}.  In addition, we shall also require Stiffler's switching rule: $\pv G\ast \pv R\subseteq \pv R\ast \pv G$; see~\cite[Corollary 4.5.3]{qtheor}.

\begin{Def}[Type I]
A submonoid $S'$ of a monoid $S$ is said to be \emph{Type I} if, for any relational morphism $\p\colon S\to T$ with $T\in \pv A$, there exists a submonoid $T'\leq T$ so that $T'\in \pv{ER}$ and $S'\subseteq T'\pinv$.\index{Type I}
\end{Def}

A monoid is said to be \emph{absolute Type} I if it is a Type I submonoid of itself.
Absolute Type I monoids were effectively characterized by the first two authors, Margolis and Pin in~\cite{HMPR}, see~\cite[Theorem 4.12.19]{qtheor}.  It follows from a result of the authors~\cite{ourstablepairs} that it is decidable whether a submonoid of a monoid is Type I.  We briefly explain.

\begin{Def}[$\pv V$-stable pair]
Let $S$ be a monoid and suppose that $A\subseteq S$ and $S'$ is a submonoid of $S$.  Then $(A,S')$ is called a \emph{$\pv V$-stable pair} if, for all relational morphisms $\p\colon S\to T$ with $T\in \pv V$, there is an element $t\in T$ so that $A\subseteq t\pinv$ and $S'\subseteq \stab t\pinv$ where $\stab t=\{u\in T\mid tu=t\}$ is the right stabilizer of $t$ in $T$.\index{stable pair}
\end{Def}

The following theorem was proved by the authors in~\cite{ourstablepairs}.

\begin{Thm}
The set of $\pv A$-stable pairs of a finite monoid is effectively constructible.
\end{Thm}

The next proposition relates $\pv A$-stable pairs to Type I submonoids.  We shall make use of relatively free profinite monoids in the proof. Let $\pv V$ be a pseudovariety of monoids.
If $X$ is a finite set, then $\wh{F}_{\pv V}(X)$ denotes the free pro-$\pv V$ monoid on $X$ for $\pv V$ a pseudovariety of monoids~\cite[Chapter 3]{qtheor}.   We write $F_{\pv V}^{\omega}(X)$ for the unary submonoid of $\wh{F}_{\pv V}(X)$ generated by $X$ with $()^{\omega}$ as the unary operation.

\begin{Prop}\label{TypeI}
Let $S$ be a finite monoid and $S'$ a submonoid. Then the following are equivalent:
\begin{enumerate}
\item $S'$ is a Type I submonoid of $S$;
\item There exists $s\in S$ so that $(\{s\},S')$ is an $\pv A$-stable pair;
\item There exists $A\subseteq S$ so that $(A,S')$ is an $\pv A$-stable pair.
\end{enumerate}
Consequently, it is decidable whether a submonoid of $S$ is Type I.
\end{Prop}
\begin{proof}
Fix a generating set $X$ for $S$.  Let $\rho_{\pv A}\colon S\to \wh{F}_{\pv A}(X)$ be the canonical relational morphism: the graph of $\rho_{\pv A}$ is the closed submonoid of $S\times \wh{F}_{\pv A}(X)$ generated by all pairs $([x]_S,x)$ with $x\in X$.

To see that (1) implies (2),   we use~\cite[Corollary 3.7.5]{qtheor} to assert that there is a closed submonoid $T\leq \wh{F}_{\pv A}(X)$ that is pro-$\pv {ER}$ so that $S'\subseteq T\rho_{\pv A}\inv$.  Let $J$ be the minimal ideal of $T$; then $J$ is pro-$\pv A$ (and so has trivial maximal subgroups) and has a unique $\mathscr L$-class. It follows from stability of profinite semigroups that if $t\in J$ is any element, then $T\subseteq \stab t$.  So if $s\in t\rho_{\pv V}\inv$, then $(\{s\},S')$ is an $\pv A$-stable pair by~\cite[Theorem 2.6]{ourstablepairs}.  Clearly (2) implies (3).  For (3) implies (1), we note that~\cite[Theorem 2.6]{ourstablepairs} implies there exists $t\in \wh{F}_{\pv A}(X)$ so that $A\subseteq t\rho_{\pv A}\inv$ and $S'\subseteq \stab t\rho_{\pv A}\inv$.
But~\cite[Theorem 4.1]{ourstablepairs} shows that $\stab t$ is a chain in its own (internal) $\mathscr L$-order and hence it must be $\R$-trivial since it is pro-$\pv A$.  Thus $S'$ is a Type I submonoid of $S$ by another application of~\cite[Corollary 3.7.5]{qtheor}.

The decidability result is immediate from the decidability of (2) or (3).
\end{proof}

We also need the notion of Type II elements.
\begin{Def}[Type II]
An element $s$ of a monoid $S$ is said to be of \emph{Type II} if, for all relational morphisms $\p\colon S\to G$ with $G\in \pv G$, one has $s\in 1\pinv$.  Denote by $\KG S$ the set of all Type II elements of $S$; it is a submonoid.\index{Type II}
\end{Def}

If $a,b\in S$ are such that $aba=a$ and $s\in S$, then we say $asb,bsa$ are \emph{weak conjugates} of $s$.  The following effective characterization of Type II elements was conjectured by the second author and proved by Ash~\cite{Ash}, and independently Ribes and Zalesskii~\cite{RZ}; see~\cite[Theorem 4.17.30]{qtheor} or~\cite{KarlTypeII} for perhaps the easiest proofs.

\begin{Thm}
Let $S$ be a monoid.  Then $\KG S$ is the least submonoid of $S$ closed under weak conjugation.
\end{Thm}

We now wish to define the notion of a $\pv V$-aperiodic element of a monoid, where $\pv V$ is a pseudovariety of monoids.

\begin{Def}[$\pv V$-aperiodic element]
An element $s\in S$ of a profinite monoid is called \emph{aperiodic} if $s^{\omega}=s^{\omega+1}$.  If $\pv V$ is a pseudovariety of monoids, then an element $s$ of a monoid $S$ is called \emph{$\pv V$-aperiodic} if, for all relational morphisms $\p\colon S\to T$ with $T\in \pv V$, there exists an aperiodic element $t\in T$ so that $t\in s\p$.\index{$\pv V$-aperiodic element}
\end{Def}

\begin{Rmk}\label{obvious}
Notice that any element is $\pv A$-aperiodic.   An element of a monoid is $\pv G$-aperiodic if and only if it is of Type II.  It is obvious that if $S'\leq S$ and $s$ is $\pv V$-aperiodic in $S'$, then $s$ is $\pv V$-aperiodic in $S$.  Also note that if $S\in \pv V$, then each $\pv V$-aperiodic element of $S$ must be, in fact, aperiodic (consider the identity homomorphism).
\end{Rmk}

Recall from~\cite[Definition 3.6.25]{qtheor} that a subset $A$ of a finite monoid $S$ is said to
be \emph{\pv V-like}\index{$\pv V$-like} with respect to a pseudovariety $\pv W$ if, for all relational morphisms $\p\colon S\to W$ with
$W\in \pv W$, there exists a submonoid $V\leq W$ so that $V\in \pv V$ and $A\subseteq V\pinv$.  It follows immediately from the definitions that a submonoid $S'$ of $S$ is Type I if and only if it is $\pv{ER}$-like with respect to $\pv V$ and that an element $s\in S$ is $\pv V$-aperiodic if and only if $\{s\}$ is $\pv A$-like with respect to $\pv V$.  The following is then a special case of what is proved in~\cite[page 179]{qtheor}.

\begin{Prop}\label{witnessaperiodic}
Let $S$ be a finite monoid and $\pv V$ a pseudovariety.  Then there exists a relational morphism $\p\colon S\to V$ with $V\in \pv V$ such that $s\in S$ is $\pv V$-aperiodic if and only if there exists an aperiodic element $v\in V$ so that $v\in s\p$.
\end{Prop}

We aim to provide a computable set of $\pv C_n$-aperiodic elements.  We begin with a straightforward reduction to a generating set for the pseudovariety.

\begin{Lemma}\label{sillylemma}
Let $\p\colon S\to T$ be a relational morphism and $d\colon T\to U$ a division.  Fix $s\in S$ and suppose there is an aperiodic element $u\in U$ with $u\in s\p d$.  Then there is an aperiodic element $t\in T$ so that $t\in s\p$.
\end{Lemma}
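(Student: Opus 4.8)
The plan is to produce the required element outright: choose any $t\in s\p$ with $u\in td$ and show that this very $t$ is already aperiodic. The mechanism is that a division $d$ cannot separate $t^{\omega}$ from $t^{\omega+1}$ inside $U$ unless they already coincide in $T$, so the aperiodicity of $u$ propagates back to $t$.

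First I would unwind the hypothesis $u\in s\p d$. By definition of composition of relations there is an element $t\in s\p$ with $u\in td$, that is, with the pair $(t,u)$ lying in the graph $\#d\leq T\times U$ of the division $d$. This $t$ will be the element we output; no new element needs to be constructed.

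Next I would compute inside $\#d$. Since $\#d$ is a submonoid of $T\times U$ it is closed under the operation $()^{\omega}$, so it contains $(t,u)^{\omega}$; and because the two coordinate projections are homomorphisms this power is computed coordinatewise, $(t,u)^{\omega}=(t^{\omega},u^{\omega})$. Multiplying by $(t,u)\in\#d$ gives $(t^{\omega+1},u^{\omega+1})\in\#d$ as well. Now I would invoke the hypothesis that $u$ is aperiodic, i.e., $u^{\omega}=u^{\omega+1}$: the last membership becomes $(t^{\omega+1},u^{\omega})\in\#d$, so that $u^{\omega}$ lies in both $t^{\omega}d$ and $t^{\omega+1}d$. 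Since $d$ is a division, $t^{\omega}d\cap t^{\omega+1}d\neq\emptyset$ forces $t^{\omega}=t^{\omega+1}$; hence $t$ is an aperiodic element of $T$ with $t\in s\p$, as desired.

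I do not expect any genuine obstacle; the argument collapses to essentially one line once the graph $\#d$ is brought into play. The only points meriting a word of care are the elementary facts that idempotent ($\omega$-)powers in a product of finite monoids are computed coordinatewise and are inherited by submonoids (and, if one wants the statement in its profinite form, that the coordinate projections of $\#d$ are continuous, so the same coordinatewise description of $()^{\omega}$ persists). The conceptual content is entirely the observation that a division reflects the equation $x^{\omega}=x^{\omega+1}$.
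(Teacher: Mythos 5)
Your argument is correct and is essentially the paper's proof: pick $t\in s\p$ with $u\in td$, observe $u^{\omega}\in t^{\omega}d\cap t^{\omega+1}d$ (you justify this carefully via the graph of $d$ and coordinatewise $\omega$-powers, which the paper leaves implicit), and conclude $t^{\omega}=t^{\omega+1}$ since $d$ is a division.
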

\begin{proof}
Choose $t\in T$ so that $t\in s\p$ and $u\in td$.  We claim $t$ is aperiodic.  Indeed, $u{^\omega}\in t^{\omega}d\cap t^{\omega+1}d$ and so $t^{\omega}=t^{\omega+1}$ as $d$ is a division.
\end{proof}

The following lemma shows how to generate $\pv V\ast \pv G\ast \pv A$-aperiodic elements.

\begin{Lemma}\label{generateap}
Let $\pv V$ be a pseudovariety such that $\pv V\ast \pv R=\pv V$.  Let $S'$ be a Type I submonoid of a monoid $S$ and suppose that $s$ is a $\pv V$-aperiodic element of $\KG {S'}$.  Then $s$ is $\pv V\ast \pv G\ast \pv A$-aperiodic in $S$.
\end{Lemma}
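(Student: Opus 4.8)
The plan is to strip off the three layers of $\pv V\ast\pv G\ast\pv A$ one at a time: the top aperiodic layer using that $S'$ is Type~I, the group layer using that $s\in\KG{S'}$, and the residual layer using that $s$ is $\pv V$-aperiodic in $\KG{S'}$. Throughout, ``restricting'' a relational morphism means replacing its graph by its intersection with a submonoid of the relevant product; the verification that the result is again a fully defined relational morphism is routine, and its graph is contained in the original one.

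First I would invoke Lemma~\ref{sillylemma}. Since $\pv V\ast\pv G\ast\pv A=(\pv V\ast\pv G)\ast\pv A$ and every monoid in a semidirect product pseudovariety $\pv W_1\ast\pv W_2$ divides $W_1\wr W_2$ for suitable $W_i\in\pv W_i$, it suffices to produce an aperiodic element of $U_1\wr A$ inside $s\varphi$ for an arbitrary relational morphism $\varphi\colon S\to U_1\wr A$ with $U_1\in\pv V\ast\pv G$ and $A\in\pv A$. Let $\pi\colon U_1\wr A\to A$ be the projection onto the base coordinate; applying the Type~I hypothesis to $\varphi\pi\colon S\to A$ (legitimate since $A\in\pv A$) yields a submonoid $A'\le A$ with $A'\in\pv{ER}$ and $S'\subseteq A'(\varphi\pi)\inv$. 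Restricting $\varphi$ to the domain $S'$ and the target $U'\colon=\pi\inv(A')=U_1^A\rtimes A'$ gives a relational morphism $\varphi'\colon S'\to U'$, fully defined precisely because $S'\subseteq A'(\varphi\pi)\inv$.

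Next I would note that $U'=U_1^A\rtimes A'$ is a semidirect product of the member $U_1^A\in\pv V\ast\pv G$ (a finite direct power of $U_1$) by $A'\in\pv{ER}$, hence $U'\in(\pv V\ast\pv G)\ast\pv{ER}$; and, using $\pv{ER}=\pv R\ast\pv G$, associativity of $\ast$, the switching rule $\pv G\ast\pv R\subseteq\pv R\ast\pv G$, and the hypothesis $\pv V\ast\pv R=\pv V$, one gets $(\pv V\ast\pv G)\ast\pv R=\pv V\ast(\pv G\ast\pv R)\subseteq\pv V\ast\pv R\ast\pv G=\pv V\ast\pv G$, whence $(\pv V\ast\pv G)\ast\pv{ER}=((\pv V\ast\pv G)\ast\pv R)\ast\pv G\subseteq(\pv V\ast\pv G)\ast\pv G=\pv V\ast\pv G$. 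So $U'$ divides $V_2\wr G_2$ for some $V_2\in\pv V$ and $G_2\in\pv G$, and a second application of Lemma~\ref{sillylemma}, now with source $S'$ and the division $U'\to V_2\wr G_2$, reduces the problem to finding an aperiodic element of $V_2\wr G_2$ in $s(\varphi'd')$, where $\varphi'd'\colon S'\to V_2\wr G_2$.

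For the last step, let $q\colon V_2\wr G_2\to G_2$ be the projection; then $(\varphi'd')q\colon S'\to G_2$ is a relational morphism into a group. Since every element of $\KG{S'}$ is of Type~II in $S'$, for each $x\in\KG{S'}$ there is $w\in x(\varphi'd')$ with $wq=1$, i.e.\ $w\in q\inv(1)=V_2^{G_2}$; restricting $\varphi'd'$ to the domain $\KG{S'}$ and the target $V_2^{G_2}$ then yields a fully defined relational morphism $\varphi''\colon\KG{S'}\to V_2^{G_2}$, with $V_2^{G_2}\in\pv V$. Because $s$ is $\pv V$-aperiodic in $\KG{S'}$, the set $s\varphi''$ contains an aperiodic element $u$ of $V_2^{G_2}$; as $\langle u\rangle\subseteq V_2^{G_2}$, this $u$ is aperiodic in $V_2\wr G_2$ and lies in $s(\varphi'd')$, so unwinding the two invocations of Lemma~\ref{sillylemma} produces an aperiodic element of $s\varphi$, as required. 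I expect the main obstacle to be the middle step, which is exactly where $\pv V\ast\pv R=\pv V$ and the switching rule are genuinely needed: after the Type~I restriction the target only lies in $\pv V\ast\pv G$, not in a pseudovariety with a single collapsible group quotient, so the group coordinate must first be exposed by a further wreath product division before the Type~II property of $\KG{S'}$ can be brought to bear, and one must keep careful track that all the domain/target restrictions really are fully defined relational morphisms with nested graphs.
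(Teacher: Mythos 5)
Your proposal is correct, and its first half coincides with the paper's proof: the same reduction via Lemma~\ref{sillylemma} to relational morphisms into $B\rtimes A$ (or $U_1\wr A$) with $B\in\pv V\ast\pv G$, $A\in\pv A$, the same use of the Type~I hypothesis applied to the projection onto the aperiodic coordinate to cut down to $A'\in\pv{ER}$, and the same Stiffler/switching-rule computation showing the restricted target $T=B\rtimes A'$ lies in $\pv V\ast\pv G$. Where you genuinely diverge is the endgame. The paper stays with $T$: it notes that $\KG T\in\pv V$ because $T\in\pv V\ast\pv G$, and then, to get a fully defined relational morphism $\KG{S'}\to\KG T$ inside $\eta$, it canonically factors $\eta=\alpha\inv\beta$ through a finite monoid $R$ and invokes the functorial behaviour of the group kernel ($\KG R\alpha=\KG{S'}$ and $\KG R\beta\subseteq\KG T$, citing Proposition~4.12.6 of the $q$-theory book). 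You avoid any appeal to these properties of $\KG{(-)}$ by instead dividing $T$ into an explicit wreath product $V_2\wr G_2$ (paying with a second application of Lemma~\ref{sillylemma}) and then using only the \emph{definition} of Type~II: composing with the group projection $q\colon V_2\wr G_2\to G_2$ forces every element of $\KG{S'}$ to relate to something in $q\inv(1)=V_2^{G_2}\in\pv V$, so the restriction of the graph to $\KG{S'}\times V_2^{G_2}$ is fully defined, and $\pv V$-aperiodicity of $s$ in $\KG{S'}$ finishes. Your route is more elementary and self-contained (no canonical factorization, no quoted facts about $\KG{(-)}$ under surjections and homomorphisms), at the cost of one extra division and a second use of Lemma~\ref{sillylemma}; the paper's route is shorter once those standard kernel facts are taken as known and makes the statement $\KG T\in\pv V$ explicit, which is conceptually where the hypothesis $\pv V\ast\pv R=\pv V$ is being cashed in. Both arguments are sound.
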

\begin{proof}
By Lemma~\ref{sillylemma} it suffices to show that if $\p\colon S\to B\rtimes A$ is a relational morphism with $B\in \pv V\ast \pv G$ and $A\in \pv A$, then there exists an aperiodic element $t\in B\rtimes A$ so that $s\in t\pinv$.

Let $\pi\colon B\rtimes A\to A$ be the semidirect product projection.  Setting $\psi = \p\pi$, we can find a submonoid $A'\leq A$ with $A'\in \pv {ER}$ and $S'\subseteq A'\psi\inv$.  Hence, by restricting $\p$, we can obtain a relational morphism $\eta\colon S'\to B\rtimes A'$ with $\eta\subseteq \p$.  Set $T=B\rtimes A'$.  Then $T\in \pv V\ast \pv G\ast\pv {ER}$ and
\[\pv V\ast \pv G\ast \pv {ER}=\pv V\ast \pv G\ast \pv R\ast \pv G\subseteq \pv V\ast \pv R\ast \pv G\ast \pv G\subseteq \pv V\ast \pv G\] and so $\KG T\in \pv V$.  Choose a finite monoid $R$ so that $\eta=\alpha\inv\beta$ with $\alpha\colon R\twoheadrightarrow S'$ an onto homomorphism and $\beta\colon R\to T$ a homomorphism.  By~\cite[Proposition 4.12.6]{qtheor}, we have that $\KG R\alpha=\KG {S'}$  and $\KG R\beta \subseteq \KG T$.  Hence, we obtain a relational morphism $\tau\colon \KG {S'}\to \KG{T}$ with $\tau\subseteq \eta$.  Since $\KG T\in \pv V$ and $s$ is $\pv V$-aperiodic, there exists an aperiodic element $t\in \KG T\subseteq T$ so that $s\in t\tau\inv\subseteq t\eta\inv\subseteq t\pinv$.  This completes the proof that $s$ is $\pv V\ast \pv G\ast \pv A$-aperiodic in $S$.
\end{proof}

With Lemma~\ref{generateap} in hand, we can define recursively a class of $\pv C_n$-aperiodic elements, which we call $n$-loopable elements.

\begin{Def}[$n$-loopable element]
An \emph{$n$-loopable} element of a monoid $S$ is defined recursively as follows:
\begin{itemize}
\item Any element of $S$ is $0$-loopable;
\item An element $s\in S$ is $n$-loopable in $S$, for $n\geq 1$, if there exists a Type I submonoid $T$ of $S$ so that $s$ is an $(n-1)$-loopable element of $\KG T$.
\end{itemize}\index{$n$-loopable element}
\end{Def}

Since one can effectively find all the  Type I submonoids of a monoid and the Type II submonoid is effectively computable, it follows that the set of $n$-loopable elements of a monoid is effectively computable.
An easy induction establishes that $n$-loopable elements are $\pv C_n$-aperiodic.

\begin{Prop}\label{loopableisaperiodic}
Suppose that $s\in S$ is $n$-loopable.  Then $s$ is $\pv C_n$-aperiodic.
\end{Prop}
\begin{proof}
We proceed by induction on $n$, the case $n=0$ being trivial since all elements are $\pv A$-aperiodic.  Suppose the result is true for $n\geq 0$ and suppose $s\in S$ is $(n+1)$-loopable.  Then we can find a Type I submonoid $T$ of $S$ so that $s$ is $n$-loopable in $\KG T$.  By induction, $s$ is $\pv C_n$-aperiodic in $\KG T$ and so is $\pv C_n\ast \pv G\ast \pv A$-aperiodic in $T$ by Lemma~\ref{generateap}.  Thus $s$ is $\pv C_{n+1}$-aperiodic in $S$ by Remark~\ref{obvious}.
\end{proof}

Let us show that computing $\pv C_n$-aperiodic elements is tantamount to computing complexity.
\begin{Prop}\label{approp1}
Let $\pv V$ be a pseudovariety of monoids.  Then $s\in S$ is $\pv V$-aperiodic if and only if it is $\pv A\malce \pv V$-aperiodic.
\end{Prop}
\begin{proof}
Clearly any $\pv A\malce \pv V$-aperiodic element is $\pv V$-aperiodic.  For the converse, by Lemma~\ref{sillylemma}, it suffices to consider relational morphisms $\p\colon S\to T$ so that $T$ admits an aperiodic homomorphism $\psi\colon T\to V$ with $V\in \pv V$.  Let $v\in V$ be aperiodic with $v\in s\p\psi$.  By Lemma~\cite[Lemma 4.4.4]{qtheor}, $\langle v\rangle\psi\inv$ is aperiodic.  Hence if $t\in T$ is such that $t\in s\p$ and $v=t\psi$, then $t$ is aperiodic.
\end{proof}

Consequently, we have the following membership criterion for $\pv A\malce \pv V$.
\begin{Cor}
Let $\pv V$ be a pseudovariety.  Then the following are equivalent:
\begin{enumerate}
\item $S\in \pv A\malce \pv V$;
\item Each $\pv A\malce \pv V$-aperiodic element of $S$ is aperiodic;
\item Each $\pv V$-aperiodic element of $S$ is aperiodic.
\end{enumerate}
\end{Cor}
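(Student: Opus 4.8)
The plan is to run the cycle of implications $(1)\Rightarrow(2)\Rightarrow(3)\Rightarrow(1)$, where the first two implications are essentially immediate and all the work is in $(3)\Rightarrow(1)$. The tools are Proposition~\ref{approp1} and Proposition~\ref{witnessaperiodic}, together with the defining property of the Mal'cev product: $S\in\pv A\malce\pv V$ if and only if there is a relational morphism $\p\colon S\to V$ with $V\in\pv V$ such that $e\pinv$ is aperiodic for every idempotent $e\in V$.

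For $(1)\Rightarrow(2)$: if $S\in\pv A\malce\pv V$ and $s\in S$ is $\pv A\malce\pv V$-aperiodic, apply the definition of $\pv A\malce\pv V$-aperiodicity to the identity homomorphism $1_S\colon S\to S$, which is a legitimate relational morphism into a member of $\pv A\malce\pv V$ precisely because $S$ itself lies in $\pv A\malce\pv V$. It produces an aperiodic element of $s\cdot 1_S=\{s\}$, forcing $s$ to be aperiodic. (This is exactly Remark~\ref{obvious} applied to the pseudovariety $\pv A\malce\pv V$.) For $(2)\Leftrightarrow(3)$: Proposition~\ref{approp1} asserts that an element of $S$ is $\pv V$-aperiodic if and only if it is $\pv A\malce\pv V$-aperiodic, so statements (2) and (3) are literally the same condition on $S$.

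It remains to prove $(3)\Rightarrow(1)$. Invoke Proposition~\ref{witnessaperiodic} to obtain a single relational morphism $\p\colon S\to V$ with $V\in\pv V$ such that $s\in S$ is $\pv V$-aperiodic if and only if there is an aperiodic $v\in V$ with $v\in s\p$. I claim this $\p$ witnesses $S\in\pv A\malce\pv V$, i.e.\ that $e\pinv$ is an aperiodic subsemigroup of $S$ for every idempotent $e\in V$. First, $e\pinv$ is closed under multiplication: if $e\in s\p$ and $e\in s'\p$ then $e=ee\in (ss')\p$. Second, given $s\in e\pinv$, the idempotent $e$ is in particular an aperiodic element of $V$ lying in $s\p$, so $s$ is $\pv V$-aperiodic, and hence aperiodic by hypothesis $(3)$; that is, $s^{\omega}=s^{\omega+1}$. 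Since every element of the finite semigroup $e\pinv$ satisfies this, $e\pinv$ is an aperiodic semigroup, and therefore $\p$ exhibits $S$ as a member of $\pv A\malce\pv V$.

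The only non-formal point, and the place to be careful, is that $(3)\Rightarrow(1)$ genuinely uses the \emph{uniform} witnessing morphism supplied by Proposition~\ref{witnessaperiodic}: applying the definition of $\pv V$-aperiodicity element-by-element only yields, for each non-aperiodic $s$, some relational morphism on which $s$ is not detected, and these cannot in general be assembled into a single morphism witnessing Mal'cev product membership. The remaining ingredients --- that relational morphisms multiply, that $e\pinv$ is a subsemigroup, and that a finite semigroup is aperiodic exactly when each of its elements $x$ satisfies $x^{\omega}=x^{\omega+1}$ --- are standard and require no computation.
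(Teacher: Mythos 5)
Your proof is correct and follows essentially the same route as the paper: (1)$\Rightarrow$(2) via Remark~\ref{obvious}, (2)$\Leftrightarrow$(3) via Proposition~\ref{approp1}, and (3)$\Rightarrow$(1) by taking the single witnessing relational morphism $\p\colon S\to V$ of Proposition~\ref{witnessaperiodic} and checking that $e\pinv$ is aperiodic for each idempotent $e\in V$. Your explicit remarks that $e\pinv$ is a subsemigroup and that the uniform witness (rather than element-by-element morphisms) is what makes (3)$\Rightarrow$(1) work are correct details the paper leaves implicit.
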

\begin{proof}
The implication from (1) to (2) was mentioned in Remark~\ref{obvious}.  Trivially, (2) implies (3).  Suppose (3) holds.  Find a relational morphism $\p\colon S\to V$ with $V\in \pv V$ as per Proposition~\ref{witnessaperiodic}.  Let $e\in V$ be an idempotent.  Then since $e$ is aperiodic, each element of $e\pinv$ is $\pv V$-aperiodic and hence aperiodic by assumption.  Thus $S\in \pv A\malce \pv V$.
\end{proof}

The Fundamental Lemma of Complexity~\cite{flc,TilsonXII,qtheor} asserts that $\pv C_n=\pv A\malce (\pv G\ast \pv C_{n-1})$, so we obtain the following consequence.

\begin{Cor}\label{complexityasaperiodic}
A monoid $S$ belongs to $\pv C_n$ if and only if all its $\pv C_n$-aperiodic elements are aperiodic.  Hence the complexity of $S$ is the least $n$ so that all $\pv C_n$-aperiodic elements of $S$ are aperiodic.
\end{Cor}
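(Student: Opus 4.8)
The plan is to read this off from the preceding corollary (the membership criterion for $\pv A\malce \pv V$) together with the Fundamental Lemma of Complexity; there is no new content, only a matching of indices.

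First I would treat the case $n=0$ separately. By Remark~\ref{obvious} every element of any monoid is $\pv A$-aperiodic, hence $\pv C_0$-aperiodic; so the assertion ``all $\pv C_0$-aperiodic elements of $S$ are aperiodic'' says precisely that $S$ is aperiodic, i.e.\ that $S\in \pv A=\pv C_0$. For $n\geq 1$ I would apply the preceding corollary with $\pv V=\pv G\ast \pv C_{n-1}$. The Fundamental Lemma of Complexity gives $\pv A\malce(\pv G\ast \pv C_{n-1})=\pv C_n$, so the equivalence of items (1) and (2) in that corollary becomes exactly the statement that $S\in \pv C_n$ if and only if every $\pv C_n$-aperiodic element of $S$ is aperiodic. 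This proves the first assertion.

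For the ``hence'' clause, recall that by the Prime Decomposition Theorem every finite monoid belongs to $\pv C_n$ for $n$ sufficiently large, so the complexity of $S$, defined as the least $n$ with $S\in \pv C_n$, exists. By the first assertion, for each $n$ the condition $S\in \pv C_n$ is equivalent to ``all $\pv C_n$-aperiodic elements of $S$ are aperiodic'', whence the least $n$ realizing the former coincides with the least $n$ realizing the latter, which is the second assertion. The argument is entirely a specialization of results already in hand, so I anticipate no genuine obstacle; the only delicate point is the bookkeeping that turns $\pv A\malce \pv V$ into $\pv C_n$, and this is supplied verbatim by the Fundamental Lemma.
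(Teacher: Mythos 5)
Your proof is correct and follows the paper's route exactly: the paper also deduces this by specializing the preceding membership criterion for $\pv A\malce \pv V$ to $\pv V=\pv G\ast \pv C_{n-1}$ and invoking the Fundamental Lemma of Complexity $\pv C_n=\pv A\malce(\pv G\ast \pv C_{n-1})$. Your explicit handling of the $n=0$ case and of the ``hence'' clause is just a careful spelling-out of what the paper leaves implicit.
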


The next corollary is a rephrasing of the Type I-Type II lower bound of~\cite{lowerbounds2} in the language of loopable elements.
\begin{Cor}
Let $n$ be the least non-negative integer so that each $n$-loopable element of $S$ is aperiodic.  Then $S$ has complexity at least $n$.
\end{Cor}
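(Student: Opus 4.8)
The plan is to deduce the corollary immediately from Proposition~\ref{loopableisaperiodic} together with Corollary~\ref{complexityasaperiodic} (equivalently, the last observation of Remark~\ref{obvious}).

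First I would let $c$ denote the complexity of $S$, so that $S\in \pv C_c$. By Corollary~\ref{complexityasaperiodic}, every $\pv C_c$-aperiodic element of $S$ is aperiodic. On the other hand, Proposition~\ref{loopableisaperiodic} shows that every $c$-loopable element of $S$ is $\pv C_c$-aperiodic. Combining these two facts, every $c$-loopable element of $S$ is aperiodic. In particular the set of non-negative integers $m$ such that all $m$-loopable elements of $S$ are aperiodic is non-empty --- it contains $c$ --- so the integer $n$ in the statement is well defined. Since $n$ is by hypothesis the \emph{least} such integer, we conclude $n\le c$; that is, the complexity of $S$ is at least $n$, as required.

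There is no genuine obstacle here: the corollary is essentially a repackaging of the Type~I--Type~II lower bound of~\cite{lowerbounds2} in the language of loopable elements, and all of the substance --- the recursive definition of $n$-loopable elements and the verification via Lemma~\ref{generateap} that they are $\pv C_n$-aperiodic --- has already been carried out. The only point that perhaps deserves an explicit word is the existence of the minimum defining $n$, which is handled above by noting that $m$ equal to the complexity of $S$ already has the required property, so that the minimum is taken over a non-empty set bounded below by $0$.
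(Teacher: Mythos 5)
Your proof is correct and matches the paper's (implicit) argument: the corollary is stated there without proof as an immediate consequence of Proposition~\ref{loopableisaperiodic} and Corollary~\ref{complexityasaperiodic}, which is exactly the combination you use. Your extra remark that the minimum is well defined because the complexity $c$ itself lies in the relevant set is a fine touch and requires nothing beyond what you already cited.
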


The Tall Fork from~\cite[Section 4.14]{qtheor} has the property that all its $1$-loopable elements are aperiodic, but it has complexity $2$.  Thus the $n$-loopable elements form a proper subset of the $\pv C_n$-aperiodic elements.

\subsection{Values and inevitable flows}

Fix again an $X$-generated group mapping monoid $M$ with distinguished $\R$-class $R$ and put $L=\sp$.
In this subsection we single out a certain submonoid of the abstract flow monoid consisting of so called $\pv V$-inevitable elements.

If $(Q,X)$ is a partial automaton with transition monoid in $\pv V$, then $\widehat F_{\pv V}(X)$ acts
naturally on $Q$.  In what follows, if $F$ is a complete flow on a partial automaton $\Ac$ and $qt$ is undefined for $q\in Q$, then we interpret $qtF$ as $\square F=B$.

\begin{Def}[$\pv V$-inevitable flow]\label{definevflow}
Let $\pv V$ be a pseudovariety of monoids.  An abstract flow $f\in \mathscr C(L^2)$ is said to be \emph{$\pv V$-inevitable} if, for all complete flows $F$ on a partial automaton $\Ac=(Q,X)$ with
transition monoid $N\in \pv V$, there exists $t\in N$ so that one has
\begin{equation}\label{inevflow}
qF\xrightarrow {\ov f} qtF
\end{equation}
for all states $q\in Q$.\index{$\pv V$-inevitable flow}
\end{Def}

It turns out that, for any complete flow on an automaton with transition monoid in $\pv V$, the values of the flow are stable under back-flow by any $\pv V$-inevitable abstract flow.

\begin{Lemma}\label{realsflowsfclosed}
Let $F$ be a complete flow on a partial automaton $\mathscr A=(Q,A)$ with transition monoid $N\in \pv V$.  Suppose that $f$ is a $\pv V$-inevitable abstract flow and $q\in Q$.  Then $(qF)\overleftarrow{f}=qF$.
\end{Lemma}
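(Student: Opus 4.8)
The plan is to reduce the statement to two facts that are already available: the definition of $\pv V$-inevitability of $f$, and the characterization of the image of the back-flow closure operator $\overleftarrow f$ as $\dom\ov f$. Once $qF$ is shown to lie in $\dom\ov f$, being fixed by $\overleftarrow f$ is automatic, since a closure operator fixes exactly the elements of its image.

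First I would apply Definition~\ref{definevflow} to the complete flow $F$ on the partial automaton $\mathscr A$, whose transition monoid $N$ lies in $\pv V$ by hypothesis. This produces an element $t\in N$ such that $qF\xrightarrow{\ov f}qtF$ for every state $q\in Q$, where, as stipulated by the convention introduced just before Definition~\ref{definevflow}, $qtF$ is interpreted as $\square F=B$ when $qt$ is undefined. In either case, for the fixed state $q$ in the statement the pair $(qF,qtF)$ is a stable pair of $f$, i.e.\ $(qF,qtF)\in\ov f$, and hence $qF\in\dom\ov f$.

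Then I would recall that $\overleftarrow f$ is by definition the closure operator on $L$ whose image is the meet-closed set $\dom\ov f$, so that, by the general correspondence between closure operators and their images (the stable elements of a closure operator $c$ are precisely the members of $\Image c$, as recorded in the discussion around \eqref{closureformula}), an element $\ell\in L$ satisfies $\ell\overleftarrow f=\ell$ if and only if $\ell\in\dom\ov f$. Combining this with the previous paragraph gives $(qF)\overleftarrow f=qF$, as desired.

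I do not anticipate a genuine obstacle: the proof is essentially a translation between two formulations of the same information (``$f$ is $\pv V$-inevitable'' $\Rightarrow$ ``$qF\in\dom\ov f$'') followed by a quotation of the closure-operator/image correspondence. The only place warranting a line of care is the possibility that $qt$ is undefined, which is absorbed by the convention $qtF=B$ so that \eqref{inevflow} still witnesses $qF\in\dom\ov f$.
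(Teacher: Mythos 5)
Your proof is correct and follows exactly the paper's argument: choose $t\in N$ witnessing $\pv V$-inevitability so that $(qF,qtF)$ is a stable pair of $f$, conclude $qF\in\dom\ov f$, and invoke the fact that $\overleftarrow f$ fixes precisely the elements of its image $\dom\ov f$. The extra care about the convention $qtF=\square F=B$ when $qt$ is undefined is a fine detail the paper leaves implicit, but it does not change the route.
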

\begin{proof}
Choose $t\in N$ so that \eqref{inevflow} holds.  Then evidently $qF\in \dom {\ov f}$ and hence stable for $\overleftarrow{f}$.
\end{proof}

\begin{Prop}\label{fixB}
Let $f$ be a $\pv V$-inevitable abstract flow.  Then $B\xrightarrow{\ov f} B$.
\end{Prop}
\begin{proof}
Consider the  one-state complete automaton over $X$ \[\Ac = q_0\looop {X}\] and define a complete flow $F$ by $q_0F=B$.  Then by the definition of $\pv V$-inevitability $B=q_0F \xrightarrow{\ov f} q_0F=B$ since the transition monoid is trivial.
\end{proof}

It turns out that one can switch the order of the quantifiers in Definition~\ref{definevflow}.

\begin{Prop}\label{quantifierswitch}
An abstract flow $f\in \mathscr C(L^2)$ is $\pv V$-inevitable if and only if, for all partial automata $\Ac=(Q,X)$ with transition monoid $N\in \pv V$, there exists $t\in N$ so that, for all complete flows $F$ on $\mathscr A$, one has
\begin{equation*}
qF\xrightarrow {\ov f} qtF
\end{equation*}
for all states $q\in Q$.
\end{Prop}
\begin{proof}
Trivially, if the condition given in the proposition is verified, then $f$ is $\pv V$-inevitable.  For the converse, assume $f$ is $\pv V$-inevitable and suppose $\mathscr A=(Q,X)$ is a partial automaton with transition monoid $N\in \pv V$.  Let $\{F_1,\ldots,F_m\}$ be the \emph{finite} set of all complete flows on $\mathscr A$. Consider the partial automaton $\mathscr A'=(Q\times \{1,\ldots,m\},X)$ whose transitions, for $x\in X$, are of the form $(q,i)x=(qx,i)$ if $qx$ is defined and undefined otherwise.  In other words $\mathscr A'$ is a disjoint union of $m$ copies of $\mathscr A$.  Evidently, the transition monoid of $\mathscr A'$ is $N$. Define a flow $F$ on $\mathscr A'$ by $(q,i)F= qF_i$.  It is straightforward to verify that $F$ is a complete flow.  Since $f$ is $\pv V$-inevitable, there exists $t\in N$ so that \[qF_i=(q,i)F\xrightarrow{\ov f}(q,i)tF=qtF_i\] for all $q\in Q$ and $i=1,\ldots, m$.  This completes the proof.
\end{proof}

As with many inevitability notions, there is always a finite model witnessing $\pv V$-inevitable flows.

\begin{Prop}\label{inevflowcompactness}
There exists a complete flow $F$ on a finite partial automaton $\mathscr A$ with transition monoid $N\in \pv V$ so that $f\in \mathscr C(L^2)$ is $\pv V$-inevitable if and only if there exists $t\in T$ so that \eqref{inevflow} holds.
\end{Prop}
\begin{proof}
Let $A$ be the set of elements of $\mathscr C(L^2)$ that are not $\pv V$-inevitable; it is a finite set.  For each $g\in A$, choose a complete flow $F_g$ on a partial automaton $\mathscr A_g=(Q_g,X)$ with transition monoid $N_g\in \pv V$ so that, for all $t\in N_g$, there is a state $q_{g,t}\in Q_g$ so that $(q_{g,t}F_g,q_{g,t}tF_g)$ is not stable for $g$.  Let $\mathscr A=\coprod_{g\in A} \mathscr A_g$ be the disjoint union of these automata and let $F$ be the flow defined on $\mathscr A$ by $F|_{\mathscr A_g}=F_g$.  Then the transition monoid $N$ of $\mathscr A$ is a subdirect product of the $N_g$ and so belongs to $\pv V$.  Clearly, $F$ is a complete flow on $\mathscr A$.   Let $f\in A$ and suppose there exists $t\in N$ so that \eqref{inevflow} holds for all states $q$ of $\mathscr A$.  Then in particular, \[q_{f,t}F_f=q_{f,t}F\xrightarrow{\ov f}q_{f,t}tF=q_{f,t}tF_f\] a contradiction.  This shows that $F$ is the desired complete flow.
\end{proof}

Next we introduce the notion of a value for an abstract flow; it will turn out that an abstract flow has a value if and only if it is $\pv V$-inevitable.

\begin{Def}[Values]
Let \pv V be a pseudovariety of monoids.  An element $t\in \wh F_{\pv V}(X)$
is said to be a \emph{value} of $f\in \mathscr C(L^2)$ (relative to $\pv V$) if,
for all complete flows $F$ on a partial automaton $\Ac=(Q,X)$ with
transition monoid in \pv V and all states $q\in Q$, we have
\begin{equation}\label{valueflow}
qF\xrightarrow {\ov f} qtF.
\end{equation}
We use $f\upsilon_{\pv V}$ to denote the set of values of $f$.\index{value}
\end{Def}

Values in the aperiodic setting are very closely related to what are
called bases in geometric semigroup theory~\cite{gst}.
A standard compactness argument shows that the $\pv V$-inevitable flows are exactly those which admit values.

\begin{Prop}\label{value=inev}
Let $\pv V$ be a pseudovariety of monoids.  Then $f\in \mathscr C(L^2)$ is $\pv V$-inevitable if and only if it has a value in $\wh{F}_{\pv V}(X)$.
\end{Prop}
\begin{proof}
Suppose first that $t\in f\upsilon_{\pv V}$ and let $F$ be a complete flow on an automaton $\mathscr A=(Q,X)$ with transition monoid $N\in \pv V$.  Then \eqref{inevflow} holds since \eqref{valueflow} does.

Conversely, suppose $f$ is $\pv V$-inevitable.  Write $\wh{F}_{\pv V}(X)=\ilim_{i\in A} V_i$ where $\{V_i\mid i\in A\}$ is the set of all $X$-generated monoids in $\pv V$.  Consider the Cayley graph $\mathscr V_i =(V_i,X)$ and let $C_i$ be the set of all $t\in V_i$ so that \eqref{inevflow} holds for all complete flows $F$ on $\mathscr V_i$; by Proposition~\ref{quantifierswitch}, $C_i\neq \emptyset$.  We claim that $\{C_i\mid i\in A\}$ is an inverse subsystem of $\{V_i\mid i\in A\}$.  Indeed, suppose that $i\geq j$ and $\pi_{ij}\colon V_i\to V_j$ is the projection.  Let $F$ be a complete flow on $\mathscr V_j$.  Then $\pi_{ij}F$ is a complete flow on $\mathscr V_i$ since $q\pi_{ij}F\xrightarrow{\ov x}q\pi_{ij}xF = qx\pi_{ij}F$.  Suppose now that $t\in C_i$.  Let $p\in V_j$ and choose a preimage $q\in p\pi_{ij}\inv$.  Then \[pF = q\pi_{ij}F\xrightarrow{\ov f} (qt)\pi_{ij}F = (p(t\pi_{ij}))F\] and so $t\pi_{ij}\in C_j$.

Since an inverse limit of  non-empty finite sets is non-empty~\cite[Lemma 3.1.22]{qtheor} $C=\ilim_{i\in A}C_i\leq \wh {F}_{\pv V}(X)$ is non-empty.  Choose $t\in C$; we claim that $t$ is a value for $f$.  Let $\mathscr A=(Q,X)$ be any partial automaton with transition monoid $V_i$ in $\pv V$ and let $F$ be a complete flow on $\mathscr A$.  Let $q\in Q$.  We need to show that $qF\xrightarrow{\ov f}qtF$.  First we view $F$ as a flow on the complete automaton $\mathscr A^{\square} = (Q\cup \{\square\},X)$ by defining $\square F=B$; note that $V_i$ is still the transition monoid of $\mathscr A^{\square}$.  Next define a complete flow $F'$ on $\mathscr V_i$ by putting $vF' = qvF$.  To verify that this is a complete flow, note that \[vF'=qvF\xrightarrow{\ov x}qvxF = vxF'\] since $F$ is a complete flow.  Let $\pi_i\colon \wh{F}_{\pv V}(X)\to V_i$ be the continuous projection.  Then $C\pi_i\subseteq C_i$ and so if $I$ is the identity of $V$, then  \[qF=IF'\xrightarrow{\ov f}It\pi_iF'=qtF,\] as required.  This shows that $C\subseteq f\upsilon_{\pv V}$, completing the proof.
\end{proof}

\subsection{The values lemma}
Here we establish that the $\pv V$-inevitable flows form a submonoid of $\mathscr C(L^2)$ with certain closure properties.

\begin{Lemma}[Values lemma]\label{valueslemma}
Let \pv V be a pseudovariety.  The collection $\mathsf M_{\pv V}(L)$ of all elements of
$\mathscr C(L^2)$ that have values satisfies:
\begin{enumerate}
\item  {\rm (Identity)}\ The multiplicative identity $I$ of $\mathscr
C(L^2)$ is in $\mathsf M_{\pv V}(L)$;
\item {\rm (Points)}\ For all $x\in X$,
free flow along $x$ belongs to $\mathsf M_{\pv V}(L)$;
\item {\rm (Products)}\
If $f_1,f_2\in \mathsf M_{\pv V}(L)$, then $f_1f_2\in \mathsf M_{\pv V}(L)$;
\item  {\rm (Vacuum)}\ If $f\in \mathsf M_{\pv V}(L)$, then $\overleftarrow f\in \mathsf M_{\pv V}(L)$;
\item  {\rm (Aperiodic Action)}\ If $f\in \mathsf M_{\pv V}(L)$ is a $\pv V$-aperiodic element,
then $f^{\omega+*}\in \mathsf M_{\pv V}(L)$;
\item {\rm (Pointlikes)}\  If $A\subseteq \mathsf M_{\pv V}(L)$ is $\pv V$-pointlike, then $\bigvee A\in \mathsf M_{\pv V}(L)$;
\item {\rm (Stable pairs)}\ If $(A,T)$ is a $\pv V$-stable pair for $\mathsf M_{\pv V}(L)$, then we have $\bigvee A\cdot \left(\bigvee T\right)^*\in M_{\pv V}(L)$.
\end{enumerate}
Moreover, $\upsilon_{\pv V}\colon  \mathsf M_{\pv V}(L)\to \wh F_{\pv V}(X)$ is a relational morphism of
profinite monoids.
\end{Lemma}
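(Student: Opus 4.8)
The plan is to reduce every item, and the final assertion, to the fact (Proposition~\ref{value=inev}) that membership in $\fmon{\pv V}{L}$ is equivalent to $\pv V$-inevitability, and to route the substantive items through one uniform device. For each complete flow $F$ on a partial automaton $\Ac=(Q,X)$ whose transition monoid $N$ lies in $\pv V$, I would introduce the relation $\rho_F\colon \fmon{\pv V}{L}\to N$ given by
\[g\rho_F=\{t\in N\mid qF\xrightarrow{\ov g} qtF\ \text{for every}\ q\in Q\}\]
(using throughout the convention $qtF=\square F=B$ when $qt$ is undefined). The first real step is to verify that $\rho_F$ is a relational morphism of finite monoids: it contains $(I,1_N)$ since $(\ell,\ell)$ is always stable for $I$; it is multiplicative because, given $t_i\in g_i\rho_F$, applying the defining condition for $g_2$ at the state $qt_1$ gives $qF\xrightarrow{\ov{g_1}}qt_1F\xrightarrow{\ov{g_2}}qt_1t_2F$; and it is fully defined because if $g$ has a value $t_0\in\wh F_{\pv V}(X)$ and $\eta\colon \wh F_{\pv V}(X)\to N$ is the canonical projection, then $qt_0F=q(t_0\eta)F$ for all $q$, so $t_0\eta\in g\rho_F$.

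With $\rho_F$ in hand, items (1)--(4) reduce to exhibiting explicit values. For (1), $1$ is a value of $I$ since $(\ell,\ell)$ is stable for $I$. For (2), the generator image $[x]$ is a value of free flow along $x$: when $qx$ is defined this holds because $F$ is a flow, and when $qx$ is undefined it holds because $F$ is a complete flow, so $qF\xrightarrow{\ov x}B=\square F$. For (3), if $t_i$ is a value of $f_i$, then $t_1t_2$ is a value of $f_1f_2$ by the same ``compose at a state'' argument used for $\rho_F$. For (4), $1$ is a value of $\overleftarrow f$: since $f$ has a value it is $\pv V$-inevitable, so Lemma~\ref{realsflowsfclosed} gives $(qF)\overleftarrow f=qF$ for all complete flows $F$ with transition monoid in $\pv V$ and all $q$, that is, $(qF,qF)$ is stable for $\overleftarrow f$.

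The heart of the matter is items (5)--(7), each proved by feeding $\rho_F$ into the relevant closure hypothesis and recognizing the resulting witness $t\in N$ as a witness for the claimed flow; since $F$ is arbitrary this yields $\pv V$-inevitability, hence membership in $\fmon{\pv V}{L}$. For (5), $\pv V$-aperiodicity of $f$ applied to $\rho_F$ produces an \emph{aperiodic} $t\in N$ with $t\in f\rho_F$. Iterating gives $qF\xrightarrow{\ov{f^n}}qt^nF$, so choosing $n$ with $f^n=f^{\omega}$ and $t^n=t^{\omega}$ gives $qF\xrightarrow{\ov{f^{\omega}}}qt^{\omega}F$; and applying $t\in f\rho_F$ at the state $qt^{\omega}$ together with $t^{\omega+1}=t^{\omega}$ gives $qt^{\omega}F\xrightarrow{\ov f}qt^{\omega}F$. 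By the description of $f^{\omst}=f^{\omega}f^*$, this says exactly $qF\xrightarrow{\ov{f^{\omst}}}qt^{\omega}F$ for all $q$. For (6) and (7) I would use that the join in $\mathscr C(L^2)$ is intersection of images, so $(\ell,\ell')$ is stable for $\bigvee A$ precisely when it is stable for every member of $A$. In (6), $\pv V$-pointlikeness of $A$ applied to $\rho_F$ gives $t\in N$ with $t\in g\rho_F$ for all $g\in A$, whence $qF\xrightarrow{\ov{\bigvee A}}qtF$ for all $q$. In (7), $\pv V$-stability of $(A,T)$ applied to $\rho_F$ gives $t\in N$ with $t\in g\rho_F$ for all $g\in A$ and, for each $h\in T$, an element $u_h\in h\rho_F$ with $tu_h=t$; applying the condition for $h$ at the state $qt$ and using $tu_h=t$ shows $(qtF,qtF)$ is stable for every $h\in T$, so $qtF$ is fixed by $\bigvee T$, hence $(qtF,qtF)$ is stable for $(\bigvee T)^*$, which combined with $qF\xrightarrow{\ov{\bigvee A}}qtF$ gives $qF\xrightarrow{\ov{(\bigvee A)(\bigvee T)^*}}qtF$ for all $q$.

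For the final assertion, $\upsilon_{\pv V}$ is fully defined by the very definition of $\fmon{\pv V}{L}$, its graph is a submonoid of $\fmon{\pv V}{L}\times\wh F_{\pv V}(X)$ by items (1) and (3), and the graph is closed: $\fmon{\pv V}{L}$ is finite and discrete, and each value set $f\upsilon_{\pv V}$ is an intersection of clopen subsets of $\wh F_{\pv V}(X)$, since for fixed $(F,\Ac,q)$ the map $t\mapsto qtF$ factors through the finite transition monoid and so is locally constant, making $\{t\mid (qF,qtF)\in\ov f\}$ clopen. Hence $\upsilon_{\pv V}$ is a relational morphism of profinite monoids. I expect the main obstacle to be item (5): the key point is that, for an aperiodic witness $t$ of $f$, the single element $t^{\omega}$ is simultaneously a forward witness for $f^{\omega}$ and a loop (fixed-point) witness for $f$, which is exactly what forces $f^{\omst}$ to be inevitable; verifying uniformly that every $\rho_F$ is a relational morphism into a member of $\pv V$ is the other technical point that must be pinned down carefully.
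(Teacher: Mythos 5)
Your proposal is correct and takes essentially the same route as the paper: explicit values for (Identity), (Points), (Products) and (Vacuum), a relational morphism from $\mathsf M_{\pv V}(L)$ into the transition monoid $N\in\pv V$ to which the $\pv V$-aperiodic, $\pv V$-pointlike and $\pv V$-stable-pair hypotheses are applied to get the witness for (5)--(7) (with $t^{\omega}$ serving simultaneously as the $f^{\omega}$-witness and the loop for $f$), and closedness of the value sets via finite quotients for the profinite assertion. The only difference is cosmetic packaging: you build a per-flow relational morphism $\rho_F\colon \mathsf M_{\pv V}(L)\to N$ (which contains the paper's composite $\upsilon_{\pv V}\pi$ with $\pi\colon \wh F_{\pv V}(X)\to N$ the canonical projection), and, like the paper, you pass silently through the sink state when $qt$ is undefined --- a step covered by Proposition~\ref{fixB}, which gives $B\xrightarrow{\ov g}B$ for any $g$ admitting a value.
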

\begin{proof}
First we verify (Identity).   We do this by showing that
the empty word $\varepsilon$ is a value for $I$.  Indeed, the stable
pairs of $I$ are just the elements of the form $(\ell,\ell)$ with
$\ell\in L$.  So if $\Ac=(Q,X)$ is any partial automaton, then
$qF\xrightarrow {\ov I} qF$ for all complete flows $F$ on $\Ac$.

Next we verify (Points).  We claim that $x$ is a value for
free flow along $x$.  Indeed, by definition of a complete flow on a
partial automaton $\Ac=(Q,X)$, if $q\in Q$, then \[qF\xrightarrow {\ov x}
qxF,\] establishing that $x$ is a value for free flow along $x$.

Turning to (Products), we claim that if $u\in f\upsilon_{\pv V}$ and $t\in g\upsilon_{\pv V}$, then $ut\in
(fg)\upsilon_{\pv V}$. Indeed, let $F$ be a complete flow on a
partial automaton $\Ac=(Q,X)$ with transition monoid in \pv V. Then,
for all $q\in Q$,
\[qF\xrightarrow {\ov f} quF\xrightarrow {\ov g}qutF.\] From this we obtain
$qF\xrightarrow {\ov {fg}} qutF$, as required.

The proofs for (Identity) and (Products) shows that $\upsilon_{\pv V}$ is
indeed a relational morphism of monoids.  But if $t\in
\ov{f\upsilon_{\pv V}}$, then in every finite $X$-generated
monoid in $\pv V$, $t$ agrees with a value of $f$ and so \eqref{valueflow} holds.  Consequently, $t$ is a value for $f$. Thus
$f\upsilon_{\pv V}$ is closed and so $\upsilon_{\pv V}$ is
a relational morphism of profinite monoids.

Next, we verify (Vacuum).  Suppose $f\in \mathsf M_{\pv V}(L)$; we claim that the empty word $\varepsilon$ is a value for $\overleftarrow{f}$.  Let $F$ be a complete flow on a partial automaton with transition monoid in $\pv V$.  By Proposition~\ref{value=inev}, $f$ is $\pv V$-inevitable and hence, by Lemma~\ref{realsflowsfclosed}, $(qF)\overleftarrow{f}=qF$. Therefore, $(qF,qF)$ is a stable pair for $\overleftarrow{f}$, establishing that $\varepsilon\in \overleftarrow{f}\upsilon_{\pv V}$.

To check (Aperiodic Action), suppose that $f\in \mathsf M_{\pv V}(L)$ is $\pv V$-aperiodic; we show that $f^{\omst}$ is $\pv V$-inevitable and hence has a value.  Let $\Ac = (Q,X)$
be a partial automaton with transition monoid $N\in \pv V$.   Let $\pi\colon \wh F_{\pv V}(X)\to N$ be the canonical projection.  Then $\upsilon_{\pv V}\pi\colon \mathsf M_{\pv V}(L)\to N$ is a relational morphism and so there exists an aperiodic element $n\in N$ and $t\in f\upsilon_{\pv V}$ so that $t\pi =n$.

Let $F$ be a complete flow on $\Ac$. We claim that, for all states $q\in Q$, we have $qF\xrightarrow{\ov{f^{\omst}}}(qn^{\omega})F$.   First note that $t^{\omega}$ is a value for $f^{\omega}$ since $\upsilon_{\pv V}$ is a relational morphism of profinite monoids. Thus $qF\xrightarrow{\ov {f^{\omega}}} qt^{\omega}F$.  Moreover, \[qn^{\omega}F\xrightarrow{\ov f}qn^{\omega}nF=qn^{\omega}F\] since $t$ is a value for $f$, $n=t\pi$ and $n$ is aperiodic.  Thus \[qF\plstar{\ov{f^{\omega}}}{\ov f}qn^{\omega}F;\] in other words $qF\xrightarrow{\ov{f^{\omst}}}qn^{\omega}F$.  This establishes that $f$ is $\pv V$-inevitable.

Next suppose that $A\subseteq \mathsf M_{\pv V}(L)$ is $\pv V$-pointlike.  Let $F$ be a complete flow on a partial automaton $\Ac = (Q,X)$ with transition monoid $N\in \pv V$.  Let $\pi\colon \wh F_{\pv V}(X)\to N$ be the canonical projection.  Then $\psi=\upsilon_{\pv V}\pi\colon \mathsf M_{\pv V}(L)\to N$ is a relational morphism and so there exists an  element $n\in N$ so that $A\subseteq n\psi\inv$.  Hence, for each $a\in A$, we can find $t_a\in a\upsilon_{\pv V}$ so that $t_a\pi =n$.  Then $qF\xrightarrow{\ov a}qnF$ is stable for all $a\in A$ and therefore \[qF\xrightarrow{\ov {\bigvee A}}qnF.\]  Thus $\bigvee A$ is $\pv V$-inevitable and so belongs to $\mathsf M_{\pv V}(L)$.

Finally, suppose that $(A,T)$ is a $\pv V$-stable pair for $\mathsf M_{\pv V}(L)$.  As before,  let $F$ be a complete flow on a partial automaton $\Ac = (Q,X)$ with transition monoid $N\in \pv V$ and denote by $\pi\colon \wh F_{\pv V}(X)\to N$ the canonical projection.  Then $\psi=\upsilon_{\pv V}\pi\colon \mathsf M_{\pv V}(L)\to N$ is a relational morphism and so there exists an  element $n\in N$ so that $A\subseteq n\psi\inv$ and $T\subseteq \stab n\psi\inv$.  It follows that \[qF\plstar{\ov a}{\ov f}qnF\] for all $a\in A$ and $f\in T$ and so \[qF\xrightarrow{\ov{\bigvee A\cdot\left(\bigvee T\right)^*}}qnF.\]  We conclude that $\bigvee A\cdot\left(\bigvee T\right)^*\in \mathsf M_{\pv V}(L)$.
\end{proof}

\subsection{The $n$-flow monoid}
To create our lower bound for complexity $n$, we would like to use $\mathsf M_{\pv C_{n-1}}(L)$, but it is not clear that this set is computable.  In fact, computability of $\pv C_n$-aperiodic elements for all $n\geq 0$ implies the decidability of complexity by Corollary~\ref{complexityasaperiodic}.  So instead we use the effectively constructible set of $(n-1)$-loopable elements.

\begin{Def}[$n$-flow monoid]\label{flowmonoid}
The $n$-flow monoid $\mathsf M_n(L)$, for $n\geq 0$, is the smallest
subset of $\mathscr C (L^2)$ satisfying the following axioms:
\begin{enumerate}
\item  {\rm (Identity)}\ The multiplicative identity $I$ of $\mathscr
C(L^2)$ is in $\mathsf M_n(L)$;
\item {\rm (Points)}\ For all $x\in X$,
free flow along $x$ belongs to $\mathsf M_n(L)$;
\item {\rm (Products)}\
If $f_1,f_2\in \mathsf M_n(L)$, then $f_1f_2\in \mathsf M_n(L)$;
\item  {\rm (Vacuum)}\ If $f\in \mathsf M_n(L)$, then $\overleftarrow f\in \mathsf M_n(L)$;
\item  {\rm (Loops)}\ If $f\in \mathsf M_n(L)$ is $n$-loopable,
then $f^{\omega+*}\in \mathsf M_n(L).$
\end{enumerate}\index{$n$-flow monoid}
\end{Def}

We obtain as an immediate corollary of Lemma~\ref{valueslemma} and Proposition~\ref{loopableisaperiodic}:

\begin{Cor}\label{usablevalues}
The set $\mathsf M_n(L)$ is an effectively computable submonoid of $\mathsf M_{\pv C_n}(L)$.  Hence each element of $\mathsf M_n(L)$ has a value relative to $\pv C_n$ and $\upsilon_{\pv
C_n}\colon M_n(L)\rightarrow \wh{F}_{\pv C_n}(X)$ is a relational morphism.  Consequently, each element of $M_n(L)$ is $\pv C_n$-inevitable.
\end{Cor}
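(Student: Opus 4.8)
The plan is to deduce everything from the Values Lemma (Lemma~\ref{valueslemma}), the fact that $n$-loopable elements are $\pv C_n$-aperiodic (Proposition~\ref{loopableisaperiodic}), and the minimality built into Definition~\ref{flowmonoid}.

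\textbf{Effective computability.} Since $L=\sp$ is a finite lattice, $\mathscr C(L^2)$ is a finite monoid, so $\mathsf M_n(L)$ is a finite set. I would compute it by saturation: start with the list consisting of the identity $I$ and free flow along each $x\in X$, and then repeatedly close under products, under $f\mapsto\overleftarrow f$, and under $f\mapsto f^{\omega+*}$ applied only to those $f$ that are $n$-loopable, adjoining any new element so produced. Each of these is an effective operation on the concrete finite monoid $\mathscr C(L^2)$; in particular, as already observed, the Type~I submonoids and the Type~II submonoid of a finite monoid can be computed, so $n$-loopability is decidable. The resulting increasing chain of subsets stabilizes after finitely many rounds because $\mathscr C(L^2)$ is finite, and its union is the smallest subset of $\mathscr C(L^2)$ closed under axioms (1)--(5) of Definition~\ref{flowmonoid}, that is, $\mathsf M_n(L)$.

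\textbf{The inclusion $\mathsf M_n(L)\subseteq\mathsf M_{\pv C_n}(L)$.} By minimality of $\mathsf M_n(L)$ it suffices to verify that $\mathsf M_{\pv C_n}(L)$ itself satisfies axioms (1)--(5) of Definition~\ref{flowmonoid}. Axioms (1) (Identity), (2) (Points), (3) (Products) and (4) (Vacuum) are verbatim clauses of Lemma~\ref{valueslemma} with $\pv V=\pv C_n$. For axiom (5) (Loops): if $f\in\mathsf M_{\pv C_n}(L)$ is $n$-loopable, then $f$ is $\pv C_n$-aperiodic by Proposition~\ref{loopableisaperiodic}, so the (Aperiodic Action) clause of Lemma~\ref{valueslemma} yields $f^{\omega+*}\in\mathsf M_{\pv C_n}(L)$. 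Hence $\mathsf M_{\pv C_n}(L)$ satisfies all five axioms, and minimality of $\mathsf M_n(L)$ gives the inclusion.

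\textbf{The remaining assertions.} Since $I\in\mathsf M_n(L)$ and $\mathsf M_n(L)$ is closed under products, it is a submonoid of $\mathscr C(L^2)$. Because $\mathsf M_{\pv C_n}(L)$ is by definition the set of elements of $\mathscr C(L^2)$ having a value relative to $\pv C_n$, every element of $\mathsf M_n(L)$ has such a value; restricting the relational morphism $\upsilon_{\pv C_n}\colon\mathsf M_{\pv C_n}(L)\to\wh F_{\pv C_n}(X)$ of Lemma~\ref{valueslemma} to the submonoid $\mathsf M_n(L)$ is again a relational morphism, since a relational morphism restricted to a submonoid of its domain is one. Finally, $\pv C_n$-inevitability of each element of $\mathsf M_n(L)$ is immediate from Proposition~\ref{value=inev}, which identifies having a value in $\wh F_{\pv C_n}(X)$ with being $\pv C_n$-inevitable. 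The only genuine content is the inclusion $\mathsf M_n(L)\subseteq\mathsf M_{\pv C_n}(L)$, and within that the single nontrivial point is invoking Proposition~\ref{loopableisaperiodic} to pass from ``$n$-loopable'' to ``$\pv C_n$-aperiodic'' so that the (Aperiodic Action) clause applies; everything else is bookkeeping.
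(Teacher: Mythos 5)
Your proposal is correct and follows exactly the route the paper intends: the paper states this corollary as an immediate consequence of Lemma~\ref{valueslemma} (with $\pv V=\pv C_n$) together with Proposition~\ref{loopableisaperiodic}, which is precisely your argument, with the minimality/saturation and restriction-of-$\upsilon_{\pv C_n}$ details spelled out. Nothing further is needed.
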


The proof in fact shows the following: each element of $\mathsf
M_0(L)$ has a value in $F^{\omega}_{\pv A}(X)$.  Thus for complexity one, we are looking at some type of tameness as per~\cite{Almeida&Steinberg:2000,Almeida&Steinberg:1998}.

It is natural to ask why we do not choose some of the other properties from Lemma~\ref{valueslemma} in our definition of $M_n(L)$.  For instance, $\pv A$-pointlikes are decidable~\cite{Henckell,ouraperioidcpointlikes,qtheor}, so why not allow them to be joined in the definition of $M_0(L)$?  It turns out that they are not necessary.

\begin{Prop}
Let $A$ be an $\pv A$-pointlike subset of $M_0(L)$.  Then there exists $f\in M_0(L)$ so that $\bigvee A\leq f$.
\end{Prop}
\begin{proof}
According to~\cite[Theorem 4.19.2]{qtheor}, the collection of $\pv A$-pointlike subsets of $M_0(L)$ is the smallest subsemigroup of $P(M_0(L))$ containing the singletons and closed under $Z\mapsto Z^{\omega}\bigcup_{k\geq 0}Z^k$.  Let $B$ be the collection of subsets of $M_0(L)$ with an upper bound in $M_0(L)$.  We show that $B$ satisfies these properties.  Clearly all singletons belong to $B$.  if $Y,Z\in B$ and $f,g\in M_0(L)$ bound $Y$ and $Z$ respectively, then $fg$ bounds each element of $YZ$ since $M_0(L)$ is an ordered monoid.  Finally, suppose $f\in M_0(L)$ is an upper bound for $Z\in B$.  Then each element of $Z^k$ is below $f^k$, which in turn is below $f^*$.  Thus $f^{\omst}$ is an upper bound for $Z^{\omega}\bigcup_{k\geq 0}Z^k$.  This completes the proof.
\end{proof}

\subsection{The $\Fc$-operator}
We continue to denote $\sp$ by $L$.  Fix $n\geq 0$.
The following definition is crucial to what follows.

\begin{Def}[$\Fc$-operator]
Define a closure operator
$\Fc\in \mathscr C(L)$ by
\begin{equation}\label{Fopeq}
\Fc = \bigvee _{f\in \mathsf M_n(L)} \overleftarrow {f}.
\end{equation}
We sometimes call $\Fc$ the \emph{vacuum}.\index{$\Fc$-operator}
\end{Def}

The stable
set for $\Fc$ is the intersection $\bigcap_{f\in \mathsf M_n(L)} \dom \ov f$.

\begin{Prop}
Viewing $\Fc$ as a two-variable closure operator, we have $\Fc$ is an idempotent of $\mathsf M_n(L)$.
\end{Prop}
\begin{proof}
This is immediate from Axiom (Products), Axiom (Vacuum) and Proposition~\ref{twoasone}.
\end{proof}

Thus it is natural to localize $\mathsf M_n(L)$ at $\Fc$, i.e., work with the subsemigroup $\Fc \mathsf M_n(L)\Fc$ of $\mathsf M_n(L)$.

\begin{Def}[\Fc-stable]
An element $\ell\in L$ is said to be \emph{$\Fc$-stable} if $\ell\Fc
= \ell$. An element $f$ of $\mathsf M_n(L)$ is said to be
\emph{\Fc-stable} if $\Fc \FFF f = f$.\index{$\Fc$-stable}
\end{Def}

Our first example of an $\Fc$-stable subset is the bottom.

\begin{Prop}\label{FpreservesB}
The bottom $B$ of $L$ is $\Fc$-stable.
\end{Prop}
\begin{proof}
Proposition~\ref{fixB} implies $B=B\overleftarrow{\Fc}=B\Fc$.
\end{proof}

Our next observation is that any flow on an automaton of complexity at most $n$ takes on only $\Fc$-stable values.  More precisely, we have:

\begin{Lemma}\label{aperiodicvaluesareFstable}
Let $F$ be a complete flow on a partial automaton $\mathscr A=(Q,X)$ with transition monoid in $\pv C_n$.  Then $qF$ is $\Fc$-stable for all $q\in Q$.
\end{Lemma}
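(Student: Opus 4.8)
The plan is to combine Corollary~\ref{usablevalues} with Lemma~\ref{realsflowsfclosed}. By Corollary~\ref{usablevalues}, every element $f\in \mathsf M_n(L)$ is $\pv C_n$-inevitable. Since $\mathscr A$ is a partial automaton with transition monoid in $\pv C_n$ and $F$ is a complete flow on $\mathscr A$, Lemma~\ref{realsflowsfclosed} (applied with $\pv V = \pv C_n$) tells us that $(qF)\overleftarrow f = qF$ for every $q\in Q$ and every $f\in \mathsf M_n(L)$. In other words, $qF$ is a fixed point of $\overleftarrow f$ for all $f\in \mathsf M_n(L)$.

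It then remains to observe that being stable under all the $\overleftarrow f$ is exactly what it means to be $\Fc$-stable. Indeed, $\Fc = \bigvee_{f\in \mathsf M_n(L)}\overleftarrow f$ in $\mathscr C(L)$, and the join in $\mathscr C(L)$ corresponds to intersection of stable sets (equivalently, the stable set of $\Fc$ is $\bigcap_{f\in \mathsf M_n(L)}\dom\ov f$, as already remarked after the definition of $\Fc$). So $\ell\Fc = \ell$ if and only if $\ell\overleftarrow f = \ell$ for every $f\in \mathsf M_n(L)$. Applying this with $\ell = qF$, which we have just shown lies in $\dom\ov f$ for every $f\in \mathsf M_n(L)$, we conclude $qF\Fc = qF$, i.e., $qF$ is $\Fc$-stable.

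There is no real obstacle here: the content has been front-loaded into Lemma~\ref{realsflowsfclosed} and Corollary~\ref{usablevalues}, and this lemma is essentially bookkeeping that translates "stable under back-flow by everything in $\mathsf M_n(L)$" into "$\Fc$-stable." The only point requiring minor care is making sure the correspondence between joins in $\mathscr C(L)$ and intersections of images (i.e., equation~\eqref{reversal} and the surrounding discussion identifying $\mathscr C(L)$ with $\mathsf C(L)$) is invoked correctly, so that a fixed point of every $\overleftarrow f$ is genuinely a fixed point of the join $\Fc$.

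\begin{proof}
By Corollary~\ref{usablevalues}, every $f\in \mathsf M_n(L)$ is $\pv C_n$-inevitable. Since $\mathscr A$ has transition monoid in $\pv C_n$ and $F$ is a complete flow on $\mathscr A$, Lemma~\ref{realsflowsfclosed} gives $(qF)\overleftarrow f = qF$ for all $q\in Q$ and all $f\in \mathsf M_n(L)$; that is, $qF\in \dom\ov f$ for every $f\in \mathsf M_n(L)$. As noted after the definition of the $\Fc$-operator, the stable set of $\Fc = \bigvee_{f\in \mathsf M_n(L)}\overleftarrow f$ is $\bigcap_{f\in \mathsf M_n(L)}\dom\ov f$. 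Hence $qF$ is $\Fc$-stable for all $q\in Q$.
\end{proof}
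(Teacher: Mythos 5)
Your proof is correct and follows essentially the same route as the paper: Corollary~\ref{usablevalues} plus Lemma~\ref{realsflowsfclosed}. The only cosmetic difference is that the paper applies Lemma~\ref{realsflowsfclosed} once to $\Fc$ itself (using that $\Fc\in \mathsf M_n(L)$ and $\Fc=\overleftarrow{\Fc}$ via Remark~\ref{oneastwonoflow}), while you apply it to every $f\in \mathsf M_n(L)$ and then invoke the description of the stable set of $\Fc$ as $\bigcap_{f\in \mathsf M_n(L)}\dom \ov f$; both are valid bookkeeping for the same argument.
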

\begin{proof}
Since each element of $\mathsf M_n(L)$ is $\pv C_n$-inevitable by Corollary~\ref{usablevalues} and $\Fc = \overleftarrow{\Fc}$ (cf.~Remark~\ref{oneastwonoflow}), it follows that $(qF)\Fc = qF$ by Lemma~\ref{realsflowsfclosed}.
\end{proof}

As a consequence, we can prove that sets do not change under $\Fc$ and hence the points of $\sp$ are $\Fc$-stable.
\begin{Prop}\label{pointsareclosed}
If $Y\subseteq R$, then $(Y,\{Y\})$ is $\Fc$-stable.  In particular, if $r\in R$, then $(r,r)$ is $\Fc$-stable.
\end{Prop}
\begin{proof}
Let $\mathscr A$ be the complete automaton \[\xymatrix{q_0\ar[r]^X&q_1\ar@(r,u)_X}\] where an edge labelled $X$ is shorthand for a set of edges labelled by each element of $X$.   Consider the complete flow $F$ on $\mathscr A$ given by $q_0F=(Y,\{Y\})$ and $q_1F=(R,\{R\})$.  This is a flow since, for any $x\in X$, trivially $Yx\subseteq R$ and any partial map $Y/\{Y\}\to R/\{R\}$ is injective.  Of course, $(R,\{R\})$ is stable for $x^*$ all $x\in X$ being the top of $\sp$.  Since $\mathscr A$ has aperiodic transition monoid, it follows $(Y,\{Y\})\Fc = (Y,\{Y\})$ by Lemma~\ref{aperiodicvaluesareFstable}.
\end{proof}

Thus the vacuum only changes partitions and not sets.

\begin{Cor}
If $(Y,P)\in L$, then $(Y,P)\Fc = (Y,P')$ for some partition $P'$ on $Y$.
\end{Cor}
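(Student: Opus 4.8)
The statement is essentially immediate once we combine Proposition~\ref{pointsareclosed} with the fact that $\Fc$ is a closure operator (in particular order-preserving and increasing). The plan is to sandwich $(Y,P)$ between two elements of $L$ whose underlying set is $Y$ and exploit monotonicity of $\Fc$ to pin down the underlying set of $(Y,P)\Fc$.

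First I would note the two comparisons in $\sp$ coming directly from the definition of the order: on the one hand $(Y,P)\le (Y,\{Y\})$, since the underlying sets agree and every $P$-related pair is trivially related by the one-block partition $\{Y\}$; on the other hand $(Y,P)\le (Y,P)\Fc$ because $\Fc$ is increasing. Write $(Y,P)\Fc=(Z,Q)$. From $(Y,P)\le (Z,Q)$ and the definition of the order on $\sp$ we get $Y\subseteq Z$. Applying the order-preserving map $\Fc$ to $(Y,P)\le (Y,\{Y\})$ and invoking Proposition~\ref{pointsareclosed}, which says $(Y,\{Y\})$ is $\Fc$-stable, we obtain $(Z,Q)=(Y,P)\Fc\le (Y,\{Y\})\Fc=(Y,\{Y\})$, hence $Z\subseteq Y$. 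Therefore $Z=Y$, so $(Y,P)\Fc=(Y,Q)$ and we may take $P'=Q$, which is by construction a partition of $Y=Z$.

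There is no real obstacle here; the only thing to be careful about is to use Proposition~\ref{pointsareclosed} for the \emph{specific} set $Y$ (not merely for points/singletons) so that the upper bound $(Y,\{Y\})$ is genuinely $\Fc$-stable, and to recall that $\Fc$ viewed as a two-variable operator still acts as the one-variable closure operator on $L$ (cf.\ Remark~\ref{oneastwonoflow}), so that monotonicity and the increasing property are available in the form used above.
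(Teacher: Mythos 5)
Your argument is correct and is essentially the paper's own proof: sandwiching $(Y,P)\le (Y,P)\Fc\le (Y,\{Y\})\Fc=(Y,\{Y\})$ via Proposition~\ref{pointsareclosed} (which indeed covers arbitrary $Y\subseteq R$, not just singletons) and the closure-operator properties of $\Fc$. Your version just spells out the two set inclusions explicitly; no substantive difference.
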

\begin{proof}
Since $(Y,P)\leq (Y,\{Y\})$, we have \[(Y,P)\leq (Y,P)\Fc \leq (Y,\{Y\})\Fc = (Y,\{Y\})\] by Proposition~\ref{pointsareclosed}.  Thus $(Y,P)\Fc=(Y,P')$ for some partition $P'$.
\end{proof}

The operator $\Fc$ captures the back-flow from all
elements of $\mathsf M_n(L)$.  More precisely we have the following
proposition.

\begin{Prop}\label{Fnobackflow}
Suppose $\ell\in L$ is \Fc-stable and let $f\in \mathsf M_n(L)$.
Then $\ell\overleftarrow {f} = \ell$, that is, $\ell\xrightarrow f B =
(\ell,\ell\overrightarrow f)$.
\end{Prop}
\begin{proof}
This follows since $\overleftarrow f\leq \Fc$ and hence the
fact that $\ell$ is \Fc-stable implies that $\ell$ is stable for
$\overleftarrow f$.
\end{proof}

\begin{Prop}\label{FMF}
The map $\mathscr C(L^2)\to \mathscr C (L^2)$ given by
$f\mapsto \Fc f\Fc$ is a closure operator satisfying
\begin{equation}\label{pre}\Fc fg\Fc\leq (\Fc f\Fc)(\Fc g\Fc).
\end{equation}
 The set $\Fc \mathsf M_n(L)\Fc$
of \Fc-stable elements of $\mathsf M_n(L)$ is a subsemigroup  with identity $\Fc$. Moreover, an element
$f\in \mathsf M_n(L)$ is \Fc-stable if and only if its stable pairs
belong to $L\Fc\times L\Fc$.
\end{Prop}
\begin{proof}
Proposition~\ref{actionfacts} implies $f\mapsto \Fc f\Fc$ is
a closure operator.  Equation \eqref{pre} is an immediate consequence of the inequality $1_L\leq \Fc$.
It is obvious that $\Fc \mathsf M_n(L)\Fc$ is a subsemigroup with identity $\Fc$.  Since as a binary relation, $\Fc = 1_{L\Fc}$, the final statement is clear.
\end{proof}

Recall that if $P$ is a partially ordered set, a subset $K\subseteq P$ is
called a \emph{filter} if $k\in K$ and $p\geq k$ implies $p\in K$.

\begin{Cor}\label{filterprop}
The subset $\Fc \fmon n L\Fc$ is a filter in $\fmon n L$.  In
particular, if $f\in \Fc \fmon n L\Fc$ is $n$-loopable, then
$f^{\omega+\ast}\in \Fc \fmon n L\Fc$.
\end{Cor}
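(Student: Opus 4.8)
The plan is to obtain both assertions from the stable-pair characterization of $\Fc$-stability in Proposition~\ref{FMF} together with the reversal \eqref{reversal}.

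I would first prove that $\Fc\fmon n L\Fc$ is a filter. By Proposition~\ref{FMF}, an element $g\in \fmon n L$ belongs to $\Fc\fmon n L\Fc$ (equivalently, is $\Fc$-stable) precisely when every stable pair of $g$ lies in $L\Fc\times L\Fc$; moreover $\Fc\fmon n L\Fc$ is a subsemigroup with identity $\Fc$, hence nonempty. Now suppose $f\in \Fc\fmon n L\Fc$ and $g\in \fmon n L$ with $f\leq g$. Applying \eqref{reversal} to the lattice $L^2$, the inequality $f\leq g$ yields $\Image g\subseteq \Image f$, i.e.\ $\ov g\subseteq \ov f$: every stable pair of $g$ is a stable pair of $f$ and so lies in $L\Fc\times L\Fc$. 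By Proposition~\ref{FMF} again, $g$ is $\Fc$-stable, so $g\in \Fc\fmon n L\Fc$. Thus $\Fc\fmon n L\Fc$ is an up-set in $\fmon n L$, i.e.\ a filter. The one point demanding attention is the direction of \eqref{reversal}: it is the \emph{larger} operator whose set of stable pairs is \emph{smaller}, which is exactly what makes $\Fc$-stability propagate upward.

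For the ``in particular'', let $f\in \Fc\fmon n L\Fc$ be $n$-loopable. Since $\fmon n L$ is a monoid and $\Fc\fmon n L\Fc$ is a subsemigroup of it, $f^k\in \Fc\fmon n L\Fc$ for all $k\geq 1$; as $L$ is finite, $f^{\omega}=f^k$ for some such $k$, whence $f^{\omega}\in \Fc\fmon n L\Fc$. By Axiom (Loops) of Definition~\ref{flowmonoid}, $f^{\omega+\ast}\in \fmon n L$, and $f^{\omega}\leq f^{\omega+\ast}$ by Proposition~\ref{actionfacts} (with $1_L$ on the left and $f^*$ on the right). The filter property just established then forces $f^{\omega+\ast}\in \Fc\fmon n L\Fc$. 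I do not expect a genuine obstacle here: everything is assembled from already-proved propositions, so the work is purely in arranging them correctly; if one preferred to avoid invoking the filter part, one could instead check directly that every stable pair $(\ell,\ell')$ of $f^{\omega+\ast}$ has $\ell'\in L\Fc$ (as $(\ell',\ell')$ is stable for $f$) and $\ell\in L\Fc$ (as $(\ell,\ell')$ is stable for the $\Fc$-stable operator $f^{\omega}$), but routing through the filter is shorter.
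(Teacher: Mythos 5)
Your proof is correct and follows essentially the same route as the paper: the filter property comes from the reversal of images ($f\leq g$ forces $\Image g\subseteq\Image f\subseteq L\Fc\times L\Fc$) together with the stable-pair characterization in Proposition~\ref{FMF}, and the ``in particular'' follows because $f^{\omega}$ is $\Fc$-stable and $f^{\omega}\leq f^{\omega+\ast}$ by Proposition~\ref{actionfacts}. Your extra remarks (invoking Axiom (Loops) to place $f^{\omega+\ast}$ in $\fmon n L$, and the optional direct stable-pair check) are harmless elaborations of the same argument.
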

\begin{proof}
Suppose that $f\in \fmon n L$ is $\Fc$-stable and $g\geq f$.  Then
Propositions~\ref{closure} and~\ref{FMF} show \[\mathop{\mathrm{Im}}
g\subseteq \mathop{\mathrm{Im}} f\subseteq L\Fc\times L\Fc.\]  Another
application of Proposition~\ref{FMF} lets us deduce that $g$ is
$\Fc$-stable.

The last statement follows since $f^{\omega}$ is $\Fc$-stable and
$f^{\omega}\leq f^{\omega+\ast}$ by Proposition~\ref{actionfacts}.
\end{proof}

\begin{Def}[$\Fc$-stable transformation monoid]\label{stabletm}
There is a monoid action of $\Fc \fmon n L\Fc$ on $L\Fc$ by total
functions defined by $\ell\cdot f = \ell\overrightarrow f$ for
$\ell\in L\Fc$, $f\in \Fc\fmon n L\Fc$.\index{$\Fc$-stable transformation monoid}
\end{Def}

\begin{Prop}
The action in Definition~\ref{stabletm} is well defined.
\end{Prop}
\begin{proof}
Let $\ell\in L\Fc$ and $f\in \Fc \fmon n L\Fc$.  Then by the last
statement of Proposition~\ref{FMF}, $\ell\overrightarrow f$ is
$\Fc$-closed. It now follows from Propositions~\ref{nobackflow} and~\ref{Fnobackflow} that the action is a well defined semigroup
action.  Clearly $\Fc$ acts as the identity on $L\Fc$ since $\overrightarrow{\Fc}=\Fc$ by Remark~\ref{oneastwonoflow}.
\end{proof}

The action of $\Fc \mathsf M_n(L)\Fc$ is not faithful.  In fact, we have:

\begin{Prop}
Suppose that $g\in \Fc \fmon n L\Fc$ is $n$-loopable. Then the equality
$\ell\cdot g^{\omst} = \ell\cdot g^{\omst}g^{\omega}$ holds for any $\ell\in L\Fc$.
\end{Prop}
\begin{proof}
Observe that $\ell\cdot g^{\omst}g^{\omega} =
\ell\cdot g^{\omst}g^{\omega}g$ by
Lemma~\ref{rhodestrickiness}.  So if $\ell_1=\ell\cdot
g^{\omst}g^{\omega}$, then $\ell_1\cdot g = \ell_1$.  Therefore, we have
\[\ell\plstar{\ov{g^{\omega}}}{\ov g}\ell_1,\] that is,
$\ell\mathrel{\ov {g^{\omst}}}\ell_1$.  We conclude that $\ell\cdot
g^{\omst}\leq \ell_1$ by definition of forward-flow.  But
$g^{\omst}g^{\omega}\leq g^{\omst}$ by Lemma~\ref{gomega+*isidempotent}, so $\ell_1\leq \ell\cdot
g^{\omst}$.  This completes the proof.
\end{proof}

\section{The Lower Bound}
This section constructs our lower bound  for complexity.
More precisely, given a finite $X$-generated group mapping monoid
$M$ with distinguished $\R$-class $R$, we shall effectively
construct a collection of $\pv C_n$-inevitable elements of
$\mathsf{SP}(M,X)$, for $n\geq 0$. Then the results of Section~\ref{s:present}
show that a necessary condition for $M$ to have complexity $n+1$ is
that no element of the form $(Y,P)$ of this collection have a block $B$ of $P$
containing distinct $\H$-equivalent elements of $R$, cf.~Corollary~\ref{presentation-flowform-inev}.

\subsection{The evaluation monoid}
We continue to denote $\sp$ by $L$ and fix $n\geq 0$.  The evaluation transformation semigroup will be the
combinatorial object that encodes the $\pv C_n$-inevitable elements of
$L$ as well as an action of a certain submonoid of $\Fc \fmon n
L\Fc$ on these elements.

 First we need the following notion of a well-formed
formula.

\begin{Def}[Well-formed formulae]
Let $X$ be an alphabet.  We define a well-formed formula inductively as follows.  The empty string $\varepsilon$ is a well-formed formula.  Each letter $x\in X$ is a well-formed formula.  If $\tau,\sigma$ are well-formed formulae, then so is $\tau \sigma$.  If $\tau$ is a well-formed formula  that is not a proper power, then $\tau^{\omst}$ is also a well-formed formula.  The set of well-formed formulae is denoted $\Omega(X)$.  Well-formed formulae will be denoted by Greek letters.  As a convention, if $\tau=\sigma^n$ where $\sigma$ is not a proper power, then we set $\tau^{\omst}=\sigma^{\omst}$; in other words, we extract roots before applying the unary operation.\index{well-formed formula}
\end{Def}

We want to interpret well-formed formulae in $\Fc \mathsf M_n(L)\Fc$.
\begin{Def}[Standard Interpretation]
Define recursively a partial function \mbox{$\Upsilon\colon \Omega(X)\to \Fc \mathsf M_n(L)\Fc$} as follows.  Set $\varepsilon\Upsilon =\Fc$ and $x\Upsilon =\Fc x\Fc$ for $x\in X$.  If $\Upsilon$ is already defined on $\tau,\sigma\in \Omega(X)$, set $(\tau\sigma)\Upsilon = \tau\Upsilon \sigma\Upsilon$.  If $\tau\in \Omega(X)$ is not a proper power and $\tau\Upsilon$ is defined and $n$-loopable, set $\tau^{\omst}\Upsilon = (\tau\Upsilon)^{\omst}$. We normally omit $\Upsilon$ and assume that
$\tau\in \Omega(X)$ is being evaluated in $\Fc\mathsf M_n(L)\Fc$ according to the
standard interpretation.   However, when there is danger of confusion
with free flow, we use $\Upsilon$.\index{standard interpretation}
\end{Def}

We can now define the set of $\Fc$-states.
\begin{Def}[\Fc-states]\label{definefstates}
The collection of \emph{$\Fc$-states} is by
definition the smallest subset $\st n L$ of $L$ such that:
\begin{enumerate}
\item {\rm (Points)} $(r,r)\in \st n L$, for all $r\in R$;
\item {\rm (Forward-flow)} $\ell \overrightarrow{\tau}\in \st n L$ for all $\ell\in \st n L$, $\tau \in \Om (X)$;
\item {\rm (Order ideal)} If $\ell\leq \ell'\in \st n L$, then $\ell\Fc\in \st n L$.
\end{enumerate}\index{$\Fc$-states}
\end{Def}

\begin{Rmk}\label{downto2points}
If there exists $(Y,P)\in  \st n L$ and $r\neq s\in Y$ such that $r\flow P s$, then $(\{r,s\},\{\{r,s\}\})\in \st n L$ by Axiom (Order Ideal) since $(\{r,s\},\{\{r,s\}\})$ is $\Fc$-stable by Proposition~\ref{pointsareclosed}.
\end{Rmk}

First we prove that $\Fc$-states are $\Fc$-stable.

\begin{Prop}
The elements of $\st n L$ are $\Fc$-stable.
\end{Prop}
\begin{proof}
We show $L\Fc$ satisfies the axioms of Definition~\ref{definefstates}.  Axiom (Points) holds by Proposition~\ref{pointsareclosed}.   Since
the front of any stable pair of an element of $\Fc\mathsf M_n(L)\Fc$
is $\Fc$-stable by Proposition~\ref{FMF}, Axiom (Forward-flow) holds
for $L\Fc$.  Axiom (Order ideal) trivially holds for $L\Fc$.
This establishes $\st n L\subseteq L\Fc$.
\end{proof}

We do not have any axiom about back-flow since
if $f\in L_1(M)$ is $\Fc$-stable and $\ell$ is an $\Fc$-state, then
$(\ell,B)f = (\ell,\ell\overrightarrow f)$, so there is no
back-flow.

\begin{Def}[Evaluation transformation monoid]
The action, from Definition~\ref{stabletm}, of $\Fc\fmon n L\Fc$ on
$L\Fc$  restricts to an action of $\Omega(X)\Upsilon$  on $\st n L$ by
Axiom (Forward-flow).  The associated faithful transformation monoid is
denoted \[\Ec n L = (\st n L, M(\Ec n L))\] and called the
\emph{evaluation transformation monoid}.  We term $M(\Ec n L)$
the \emph{evaluation monoid}.\index{evaluation transformation monoid}
\end{Def}

\subsection{Action on sets}
In this section, we try to understand how $\Omega(X)$ acts on set-partitions in the set coordinate.  For this reason, will interpret elements of $\Omega(X)$ as elements of $\mathscr C(\mathsf S(M,X)^2)$, as well.

\begin{Def}[Interpretation on sets]
Define $\Psi\colon \Omega(X)\to \mathscr C(\mathsf S(M,X)^2)$ as follows.
Set $\varepsilon\Psi$ to be the identity of $\mathscr C(\mathsf S(M,X)^2)$.  If $x\in X$, then $x\Psi$ is free flow along $x$.  If $\sigma,\tau\in \Omega(x)$, then $(\sigma\tau)\Psi = \sigma\Psi \tau\Psi$.  If $\sigma$ is not a proper power, then $\sigma^{\omst}\Psi = \sigma\Psi^{\omst}$.    Again, we drop $\Psi$ from the notation when no confusion can arise.\index{interpretation on sets}
\end{Def}

Our aim is to establish a compatibility between the interpretation of $\Omega(X)$ in the set flow lattice and the set-partition flow lattice.

\begin{Prop}\label{projecttoset}
Let $\tau\in \Omega(X)$ be in $\dom \Upsilon$.  Then $(Y,P)\xrightarrow{\ov {\tau}} (Y',P')$ implies $Y\xrightarrow{\ov {\tau}} Y'$.
\end{Prop}
\begin{proof}
We go by induction on the recursive construction of well-formed formulae.  If $(Y,P)\xrightarrow{\ov {\Fc}} (Y',P')$, then $Y=Y'$ and so $Y\xrightarrow{\ov{\varepsilon}} Y=Y'$.  If $x\in X$ and $(Y,P)\xrightarrow{\ov{\Fc x\Fc}} (Y',P')$, then $(Y,P)\xrightarrow{\ov{x}} (Y',P')$ and so $Yx\subseteq Y'$. But this says exactly that $Y\xrightarrow{\ov{x}}Y'$.

Assume that the desired implication holds for $\sigma,\tau\in \Omega(X)$ and suppose $(Y,P)\xrightarrow{\ov {\sigma\tau}} (Y',P')$.  Then we can find $(Y'',P'')$ so that \[(Y,P)\xrightarrow{\ov {\sigma}} (Y'',P'')\xrightarrow{\ov {\tau}} (Y',P').\]  Thus by hypothesis we have \[Y\xrightarrow{\ov {\sigma}} Y''\xrightarrow{\ov {\tau}} Y'\] and so $Y\xrightarrow{\ov{\sigma\tau}}Y'$, as required.  Finally, suppose that $\sigma$ satisfies the conclusion of the proposition, $\sigma\Upsilon$ is $n$-loopable and $\sigma$ is not a proper power.  Choose $m$ so that $f^m=f^{\omega}$ for all $f\in \mathsf M_n(\sp)$ and all $f\in \mathscr C(\mathsf S(M,X)^2)$.  Suppose that $(Y,P)\xrightarrow{\ov {\sigma^{\omst}}}(Y',P')$.  Then \[(Y,P)\plstar{\ov{\sigma^{m}}}{\ov {\sigma}}(Y',P')\] and so by assumption on $\sigma$ and the case of products handled above it follows \[Y\plstar{\ov{\sigma^{m}}}{\ov {\sigma}}Y'\] and hence $Y\xrightarrow{\ov {\sigma^{\omst}}}Y'$.  This completes the proof.
\end{proof}

As a corollary, we see that sets only flow forward: there is no back-flow.

\begin{Cor}\label{nosetbackflow}
If $Y\in \mathsf S(M,X)$ and $\tau \in \Omega(X)$, then $Y\overleftarrow{\tau} =Y$.
\end{Cor}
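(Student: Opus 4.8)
The plan is to derive this from Proposition~\ref{projecttoset}. It suffices to show that for every $Y\in \mathsf S(M,X)$ the set $Y$ lies in the domain of the relation associated to $\tau$ (interpreted via $\Psi$ on $\mathsf S(M,X)$): indeed, $\overleftarrow{\tau}$ is by definition the closure operator on $\mathsf S(M,X)$ whose image is $\dom\overline{\tau}$, so by~\eqref{closureformula}, since $Y\le Y$ and $Y\in\dom\overline{\tau}$, we get $Y\le Y\overleftarrow{\tau}\le Y$, whence $Y\overleftarrow{\tau}=Y$. So fix $Y\subseteq R$ and first work in $\sp$ using the standard interpretation $\Upsilon$. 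By Proposition~\ref{pointsareclosed} the element $(Y,\{Y\})$ is $\Fc$-stable, and $\tau\Upsilon\in \Fc\mathsf M_n(L)\Fc\subseteq \mathsf M_n(L)$ where $L=\sp$. Hence Proposition~\ref{Fnobackflow} applies with $\ell=(Y,\{Y\})$ and $f=\tau\Upsilon$ and yields $(Y,\{Y\})\overleftarrow{\tau\Upsilon}=(Y,\{Y\})$; equivalently, $(Y,\{Y\})$ lies in $\dom\overline{\tau\Upsilon}$, so there are $Y'\supseteq Y$ and a partition $P'$ on $Y'$ with $(Y,\{Y\})\xrightarrow{\overline{\tau}}(Y',P')$ in $\sp$. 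Proposition~\ref{projecttoset} now projects this stable pair onto the set coordinate: $Y\xrightarrow{\overline{\tau}}Y'$ in $\mathsf S(M,X)$. Thus $Y\in\dom\overline{\tau}$, and the reduction above finishes the argument.

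There is one point to watch: Proposition~\ref{projecttoset} requires $\tau\in\dom\Upsilon$, which is exactly the situation relevant to the lower bound. For a completely general $\tau\in\Omega(X)$ (so that $\tau\Upsilon$ may be undefined) one argues instead directly by structural induction on $\tau$, proving the slightly stronger statement: whenever $\tau$ contains at least one letter, $(Y,R)$ is a stable pair of $\tau\Psi$ for every $Y\in\mathsf S(M,X)$, where $R$ is the top of $\mathsf S(M,X)$ (while $Y\in\dom\overline{\tau\Psi}$ trivially when $\tau=\varepsilon$, since then $\overline{\tau\Psi}$ is the diagonal). For a letter $x$ this is immediate since $Yx\subseteq R$. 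For a product $\sigma\rho$ (both factors nonempty, the other cases reducing to a single factor) one composes $Y\xrightarrow{\overline{\sigma\Psi}}R\xrightarrow{\overline{\rho\Psi}}R$, the first piece from the inductive hypothesis on $\sigma$ and the second from that on $\rho$ applied to $R$, then uses $\overline{(\sigma\rho)\Psi}=\overline{\sigma\Psi}\,\overline{\rho\Psi}$ from Proposition~\ref{flowmonoidismonoid}. For $\sigma^{\omega+\ast}$ one writes $(\sigma\Psi)^{\omega}=(\sigma\Psi)^m$ for a suitable $m$, so that by Proposition~\ref{flowmonoidismonoid} $\overline{(\sigma\Psi)^{\omega}}$ is the $m$-fold composite of $\overline{\sigma\Psi}$; the chain $Y\xrightarrow{\overline{\sigma\Psi}}R\xrightarrow{\overline{\sigma\Psi}}\cdots\xrightarrow{\overline{\sigma\Psi}}R$ (first step by the inductive hypothesis, the rest because $(R,R)$ is always a stable pair, being the empty meet) exhibits $(Y,R)$ as stable for $(\sigma\Psi)^{\omega}$, and pairing it with the stable pair $(R,R)$ of $(\sigma\Psi)^{\ast}$ gives a stable pair of $(\sigma\Psi)^{\omega+\ast}$. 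In all cases $Y\in\dom\overline{\tau\Psi}$, and the reduction concludes.

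The step I expect to be the real obstacle — and the reason the result is placed as a corollary of Proposition~\ref{projecttoset} — is keeping the two interpretations straight: the vanishing of back-flow is a genuine phenomenon on the set\emph{-partition} lattice, where it holds because $(Y,\{Y\})$ is $\Fc$-stable (Propositions~\ref{pointsareclosed} and~\ref{Fnobackflow}), and must be transported down to the set lattice, which is precisely what Proposition~\ref{projecttoset} does. The alternative direct induction avoids this transport but then has to confront the $(\cdot)^{\omega+\ast}$ clause, where one relies on the top $R$ always being a fixed point of every flow to supply a target for the required stable pair.
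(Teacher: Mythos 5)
Your main argument is correct and is exactly the paper's: $(Y,\{Y\})$ is $\Fc$-stable by Proposition~\ref{pointsareclosed}, so Proposition~\ref{Fnobackflow} gives a stable pair $(Y,\{Y\})\xrightarrow{\ov{\tau}}(Y,\{Y\})\overrightarrow{\tau}$, which Proposition~\ref{projecttoset} projects to the set coordinate, putting $Y$ in $\dom\ov{\tau}$ and hence fixing it under $\overleftarrow{\tau}$. The only divergence is how the restriction $\tau\in\dom\Upsilon$ is removed. The paper's device is the observation that $\Upsilon$ is total when $n=0$ (every element is $0$-loopable), and since the corollary's statement lives entirely in $\mathsf S(M,X)$ via $\Psi$ -- which does not depend on $n$ -- one may simply run the argument with the $n=0$ interpretation; this disposes of all $\tau\in\Omega(X)$ in one line. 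You instead patch the general case with a direct structural induction showing $(Y,R)$ is a stable pair of $\tau\Psi$ whenever $\tau$ contains a letter, using that the top $R$ is a stable point of every meet-closed relation (the empty meet) to handle products and the $(\cdot)^{\omega+\ast}$ clause. That induction is sound and even more self-contained, since it bypasses the set-partition lattice entirely for those $\tau$; the paper's trick is shorter and keeps the whole corollary as a genuine consequence of Proposition~\ref{projecttoset}. Either route is acceptable; just note that your two branches ``$\tau=\varepsilon$'' and ``$\tau$ contains a letter'' should be read as covering all letter-free formulae (whose interpretation under $\Psi$ is the diagonal, so they are trivial in any case).
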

\begin{proof}
Note that $\Upsilon$ is total for $n=0$.
Since $\tau\Upsilon\in \mathcal F_0\mathsf M_0(\sp)\mathcal F_0$ and $(Y,\{Y\})$ is $\mathcal F_0$-stable by Proposition~\ref{pointsareclosed}, it follows  $(Y,\{Y\})\xrightarrow{\ov {\tau}} (Y,\{Y\})\overrightarrow{\tau}$.  Thus $Y\in \dom \tau$ by Proposition~\ref{projecttoset} and hence $Y\overleftarrow{\tau} =Y$.
\end{proof}

Our next goal is to prove that certain set flows yield set-partition flows.

\begin{Prop}\label{dumbpartition}
Let $\tau\in \Omega(X)$.  Then, for $Y,Y'\in \mathsf S(M,X)$, we have $Y\xrightarrow{\ov {\tau}}Y'$ if and only if $(Y,\{Y\})\xrightarrow{\ov {\tau}}(Y',\{Y'\})$ for $\tau\in \dom \Upsilon$.
\end{Prop}
\begin{proof}
The implication from right to left is a consequence of Proposition~\ref{projecttoset} so we handle the forward implication.  Again, we proceed by induction on the recursive definition of well-formed formulae.   If $Y\xrightarrow{\ov {\varepsilon}}Y'$, then $Y=Y'$.  Since $(Y,\{Y\})$ is $\Fc$-stable, by Proposition~\ref{pointsareclosed}, it follows  $(Y,\{Y\})\xrightarrow{\ov {\Fc}}(Y,\{Y\})$.  Next, suppose $x\in X$ and that $Y\xrightarrow{\ov x}Y'$.  Then $Yx\subseteq Y'$.  Since any partial function $Y/\{Y\}\to Y'/\{Y'\}$ is injective and $(Y,\{Y\}), (Y',\{Y'\})$ are $\Fc$-stable by Proposition~\ref{pointsareclosed}, we conclude \[(Y,\{Y\})\xrightarrow{\ov {\Fc x\Fc}}(Y',\{Y'\}).\]

Assume the proposition holds for $\sigma,\tau\in \Omega(X)$ and suppose $Y\xrightarrow{\ov{\sigma\tau}}Y'$. Then there exists $Y''$ so that \[Y\xrightarrow{\ov {\sigma}} Y''\xrightarrow{\ov {\tau}} Y'.\] By assumption, we have \[(Y,\{Y\})\xrightarrow{\ov {\sigma}} (Y'',\{Y''\})\xrightarrow{\ov {\tau}} (Y',\{Y'\})\] and so $(Y,\{Y\})\xrightarrow{\ov{\sigma\tau}}(Y',\{Y'\})$.

Finally, suppose that the desired conclusion holds for $\sigma\in \Omega(X)$ where $\sigma$ is not a proper power and $\sigma\Upsilon$ is $n$-loopable. Choose $m$ so that $f^m=f^{\omega}$ for all $f\in \mathsf M_n(\sp)$ and all $f\in \mathscr C(\mathsf S(M,X)^2)$.  Then the proposition also holds for $\sigma^m$.  Therefore, if $Y\xrightarrow{\ov {\sigma^{\omst}}}Y'$, then \[Y\plstar{\ov{\sigma^{m}}}{\ov {\sigma}}Y'\] and hence \[(Y,\{Y\})\plstar{\ov{\sigma^{m}}}{\ov {\sigma}}(Y',\{Y'\})\] by the induction hypothesis.  We conclude that $(Y,\{Y\})\xrightarrow{\ov {\sigma^{\omst}}}(Y',\{Y'\})$, as required.
\end{proof}

As a corollary, we can determine what happens to the sets when applying elements of $\Omega(X)$ to elements of $\sp$.
\begin{Cor}\label{setsstayput}
Suppose $\tau\in \Omega(X)$ belongs to $\dom \Upsilon$ and that $Y\xrightarrow{\tau} Y' = (Z,Z')$.  Then, for any partitions $P,P'$ on $Y,Y'$, respectively, we have \[(Y,P)\xrightarrow{\tau}(Y',P') = ((Z,Q),(Z',Q'))\] for some partitions $Q,Q'$ on $Z,Z'$, respectively.
\end{Cor}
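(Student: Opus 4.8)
The plan is to exploit Propositions~\ref{projecttoset} and~\ref{dumbpartition} together with the description, from \eqref{closureformula}, of the value of a closure operator at a point as the least stable element lying above that point. Write $Y\xrightarrow{\tau}Y'=(Z,Z')$ as in the statement and write $(Y,P)\xrightarrow{\tau}(Y',P')=\bigl((Z_1,Q),(Z_1',Q')\bigr)$. Since elements of $L$ are by definition pairs consisting of a subset of $R$ together with a partition of that subset, $Q$ is automatically a partition of $Z_1$ and $Q'$ a partition of $Z_1'$; so all that remains is to prove $Z_1=Z$ and $Z_1'=Z'$. Here $(Z,Z')$ is the least stable pair of $\tau\Psi$ above $(Y,Y')$, while $\bigl((Z_1,Q),(Z_1',Q')\bigr)$ is the least stable pair of $\tau\Upsilon$ above $\bigl((Y,P),(Y',P')\bigr)$.

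First I would check $Z\subseteq Z_1$ and $Z'\subseteq Z_1'$. As $\bigl((Z_1,Q),(Z_1',Q')\bigr)$ is stable for $\tau\Upsilon$, Proposition~\ref{projecttoset} shows that $(Z_1,Z_1')$ is stable for $\tau\Psi$; and since closure operators are increasing we also have $Y\subseteq Z_1$ and $Y'\subseteq Z_1'$, so minimality of $(Z,Z')$ gives $(Z,Z')\leq (Z_1,Z_1')$. For the reverse inclusions I would apply Proposition~\ref{dumbpartition} to the $\tau\Psi$-stable pair $(Z,Z')$, obtaining that $\bigl((Z,\{Z\}),(Z',\{Z'\})\bigr)$ is stable for $\tau\Upsilon$. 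Now $(Y,P)\leq (Z,\{Z\})$ in $\sp$: indeed $Y\subseteq Z$, and the partition condition in the order of $\sp$ is vacuous when the larger set carries the one-block partition $\{Z\}$; similarly $(Y',P')\leq (Z',\{Z'\})$. Hence $\bigl((Y,P),(Y',P')\bigr)\leq \bigl((Z,\{Z\}),(Z',\{Z'\})\bigr)$, and minimality of $\bigl((Z_1,Q),(Z_1',Q')\bigr)$ forces $Z_1\subseteq Z$ and $Z_1'\subseteq Z'$. Combining the two directions yields $Z_1=Z$ and $Z_1'=Z'$, which is the assertion.

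I do not expect a genuine obstacle here: the proof is a direct synthesis of the two preceding propositions with the elementary fact recorded in \eqref{closureformula}. The only step needing a moment's care is verifying the comparisons $(Y,P)\leq(Z,\{Z\})$ and $(Y',P')\leq(Z',\{Z'\})$ in the set-partition flow lattice, which reduces to unwinding the definition of its order relation and observing that $\{Z\}$ is the largest partition on $Z$.
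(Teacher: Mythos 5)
Your proof is correct and follows essentially the same route as the paper: both directions are obtained exactly as in the paper's argument, using Proposition~\ref{projecttoset} (plus the increasing/minimality properties of the closure operator) for $Z\subseteq Z_1$, $Z'\subseteq Z_1'$, and Proposition~\ref{dumbpartition} together with the comparisons $(Y,P)\leq(Z,\{Z\})$, $(Y',P')\leq(Z',\{Z'\})$ for the reverse inclusions. The only difference is cosmetic: the paper phrases the first direction via order-preservation of the closure operator rather than via minimality of the least stable pair, which is the same fact.
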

\begin{proof}
Suppose that $(Y,P)\xrightarrow{\tau}(Y',P') = ((W,S),(W',S'))$.  Then $W\xrightarrow{\ov {\tau}}W'$ by Proposition~\ref{projecttoset}.  Since $Y\leq W$ and $Y'\leq W'$, it follows that \[(Z,Z') = Y\xrightarrow{\tau} Y'\leq W\xrightarrow{\tau}W' = (W,W').\]
On the other hand, $(Z,\{Z\})\xrightarrow{\ov {\tau}}(Z',\{Z'\})$ by Proposition~\ref{dumbpartition}. From the inequalities $(Y,P)\leq (Z,\{Z\})$ and $(Y',P')\leq (Z',\{Z'\})$, we must have $(W,S)\leq (Z,\{Z\})$ and $(W',S')\leq (Z',\{Z'\})$.  Thus $W=Z$ and $W'=Z'$, as required.
\end{proof}

Note that we can define an action of $\Omega(X)$ on subsets of $R$ by setting $Y\cdot \tau = Y\overrightarrow{\tau}$.  This is an action by Corollary~\ref{nosetbackflow} and Proposition~\ref{nobackflow}.  As a consequence of Corollary~\ref{setsstayput} we obtain the following result.

\begin{Thm}\label{secactionok}
Let $(Y,P)$ be $\Fc$-stable and suppose that $\tau\in \Omega(X)$ is in $\dom \Upsilon$.  Then $(Y,P)\cdot \tau = (Y\cdot \tau, P')$ for some partition $P'$ on $Y\cdot \tau$.
\end{Thm}
\begin{proof}
Since $Y\xrightarrow{\tau} B = (Y\overleftarrow{\tau},Y\overrightarrow{\tau})$, Corollary~\ref{setsstayput} guarantees that \[(Y,P)\xrightarrow{\tau} B = ((Y\overleftarrow{\tau},Q),(Y\overrightarrow{\tau},P'))\] for some partitions $Q$ and $P'$ (actually $Y=Y\overleftarrow{\tau}$ and $Q=P$ since there is no back-flow on $\Fc$-stable set-partitions).
\end{proof}

We would like to make a conjecture on what $Y\cdot \tau$ is for $\tau\in \Omega(X)$.

\begin{Def}[Interpretation in $P(M)$]
Define a map $\Lambda\colon \Omega(X)\to P(M)$ recursively as follows.  Set $\varepsilon \Lambda = \{I\}$ where $I$ is the identity of $M$.  Put $x\Lambda = \{[x]_M\}$.  If $\Lambda$ is defined on $\sigma,\tau$, then set $(\sigma\tau)\Lambda = \sigma\Lambda \tau\Lambda$.  If $\sigma$ is not a proper power, put $\sigma^{\omst}\Lambda = \bigcup_{k\geq 0}(\sigma\Lambda)^{\omega}(\sigma\Lambda)^k$.\index{interpretation in $P(M)$}
\end{Def}

\begin{Conjecture}
If $\tau\in \Omega(X)$ and $Y\subseteq R$, then $Y\cdot \tau = Y(\tau\Lambda)$.
\end{Conjecture}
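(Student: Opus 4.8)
The plan is to prove the conjecture by structural induction on the well-formed formula $\tau\in\Om(X)$, working entirely in the set flow lattice $\mathsf S(M,X)$, where $Y\cdot\tau=Y\overrightarrow{\tau}$ is computed from $\tau\Psi$. Throughout I use Corollary~\ref{nosetbackflow}, namely $Y\overleftarrow{\tau}=Y$ for every $\tau\in\Om(X)$ and $Y\subseteq R$, so that $Y\xrightarrow{\tau}B=(Y,\,Y\cdot\tau)$; the inductive claim is exactly $Y\cdot\tau=Y(\tau\Lambda)$ for all $Y\subseteq R$. The base cases are immediate from the definitions of $\Psi$ and $\Lambda$: since $\varepsilon\Psi$ is the multiplicative identity of $\mathscr C(\mathsf S(M,X)^2)$ we get $Y\cdot\varepsilon=Y=Y\{I\}=Y(\varepsilon\Lambda)$, and since $x\Psi$ is free flow along $x$ we get $Y\cdot x=Yx=Y[x]_M=Y(x\Lambda)$.

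For a product $\tau=\sigma\sigma'$, Corollary~\ref{nosetbackflow} gives $Y\overleftarrow{\sigma}=Y$ and, applied at the subset $Y\overrightarrow{\sigma}=Y(\sigma\Lambda)\subseteq R$, $(Y\overrightarrow{\sigma})\overleftarrow{\sigma'}=Y\overrightarrow{\sigma}$. Hence Proposition~\ref{nobackflow} (with $\ell=Y$, $f=\sigma\Psi$, $g=\sigma'\Psi$) yields $Y\cdot(\sigma\sigma')=(Y\cdot\sigma)\cdot\sigma'$, i.e. $\cdot$ genuinely is an action. Combining this with the induction hypothesis for $\sigma,\sigma'$, the clause $(\sigma\sigma')\Lambda=(\sigma\Lambda)(\sigma'\Lambda)$, and associativity of the right action of $P(M)$ on $P(R)$ gives $Y\cdot(\sigma\sigma')=\bigl(Y(\sigma\Lambda)\bigr)(\sigma'\Lambda)=Y\bigl((\sigma\sigma')\Lambda\bigr)$.

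The essential case is $\tau=\sigma^{\omst}$ with $\sigma$ not a proper power. Write $g=\sigma\Psi$ and $A=\sigma\Lambda\subseteq M$; by definition $g^{\omst}=g^{\omega}g^{*}$, so $(\ell_1,\ell_2)$ is stable for $g^{\omst}$ iff $\ell_1\xrightarrow{\ov{g^{\omega}}}\ell_2\xrightarrow{\ov g}\ell_2$. Fix, as in the proof of Proposition~\ref{projecttoset}, an exponent $m$ with $g^{m}=g^{\omega}$ in $\mathscr C(\mathsf S(M,X)^2)$ and $A^{m}=A^{\omega}$ in $P(M)$; the already-proved product case, applied to $\sigma^{m}=\sigma\cdots\sigma$, gives $\ell_1\xrightarrow{g^{\omega}}B=(\ell_1,\ell_1A^{\omega})$ and $\ell_1\xrightarrow{g^{k}}B=(\ell_1,\ell_1A^{k})$ for all $\ell_1,k$. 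The inclusion $Y\cdot\sigma^{\omst}\supseteq Y(\sigma^{\omst}\Lambda)$ then comes out painlessly: $Y\xrightarrow{g^{\omst}}B=(Y,\,Y\cdot\sigma^{\omst})$ is stable for $g^{\omst}$, so setting $\ell_2=Y\cdot\sigma^{\omst}$ we have $Y\xrightarrow{\ov{g^{\omega}}}\ell_2\xrightarrow{\ov g}\ell_2$; comparing with the minimal stable pairs $(Y,YA^{\omega})$ and $(\ell_2,\ell_2A)$ forces $YA^{\omega}\subseteq\ell_2$ and $\ell_2A\subseteq\ell_2$, hence $YA^{\omega}A^{k}\subseteq\ell_2$ for all $k\ge 0$, i.e. $Y(\sigma^{\omst}\Lambda)=\bigcup_{k\ge 0}YA^{\omega}A^{k}\subseteq\ell_2$.

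The reverse inclusion is the crux: it is enough to show $\bigl(Y,\,Y(\sigma^{\omst}\Lambda)\bigr)$ is itself stable for $g^{\omst}$, for then $Y\xrightarrow{g^{\omst}}B\le\bigl(Y,\,Y(\sigma^{\omst}\Lambda)\bigr)$. Unwinding $g^{\omst}=g^{\omega}g^{*}$, this reduces to the two assertions $Y\xrightarrow{\ov{g^{\omega}}}Y(\sigma^{\omst}\Lambda)$ and $Y(\sigma^{\omst}\Lambda)\in\fix\ov{g}=\fix\ov{\sigma\Psi}$. To get these I would prove, by a parallel structural induction on $\sigma$, an auxiliary lemma giving a closed form for $\ov{\sigma\Psi}$: there is a \emph{content set} $C(\sigma)\subseteq M$, defined recursively alongside $\Psi$ and $\Lambda$ (with $C(\varepsilon)=\{I\}$, $C(x)=\{[x]_M\}$, and recursive clauses for products and for $(-)^{\omst}$ that push the transformations of earlier factors through the $\omega$-idempotent of later ones), such that $\sigma\Lambda\subseteq\langle C(\sigma)\rangle$, that $\langle C(\sigma)\rangle$ right-stabilises $\sigma^{\omst}\Lambda$, and that $\ell_1\xrightarrow{\ov{\sigma\Psi}}\ell_2$ iff $\ell_1(\sigma\Lambda)\subseteq\ell_2$ and $\ell_2\langle C(\sigma)\rangle\subseteq\ell_2$. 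Granting the lemma, $Y(\sigma^{\omst}\Lambda)\in\fix\ov{\sigma\Psi}$ follows from $Y(\sigma^{\omst}\Lambda)(\sigma\Lambda)\subseteq Y(\sigma^{\omst}\Lambda)$ (clear from $\sigma^{\omst}\Lambda=(\sigma\Lambda)^{\omega}\langle\sigma\Lambda\rangle$) together with the stabilisation fact, and $Y\xrightarrow{\ov{g^{\omega}}}Y(\sigma^{\omst}\Lambda)$ follows by composing the chain $Y\xrightarrow{\ov g}YA\xrightarrow{\ov g}\cdots\xrightarrow{\ov g}YA^{m-1}\xrightarrow{\ov g}Y(\sigma^{\omst}\Lambda)$, whose last step uses $YA^{m-1}(\sigma\Lambda)=YA^{\omega}\subseteq Y(\sigma^{\omst}\Lambda)$ and the same fact; the consistency of the recursion relies on $\fix\ov{\sigma^{\omst}\Psi}=\fix\ov{\sigma\Psi}$, since a fixed set of $\sigma\Psi$ is fixed by every power and conversely. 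The deductions from the lemma, the base and product cases, and the ``$\supseteq$'' inclusion are just bookkeeping with Corollary~\ref{nosetbackflow}, Propositions~\ref{multofforwardflow} and~\ref{nobackflow}, and the absorption identities of Lemma~\ref{rhodestrickiness}.

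The main obstacle is exactly this auxiliary lemma, and within it the identification of $\fix\ov{\sigma\Psi}$, which is in general \emph{strictly} smaller than $\{\ell\subseteq R:\ell(\sigma\Lambda)\subseteq\ell\}$ — already for $\sigma=y^{\omst}$ the fixed sets must be closed under right multiplication by $[y]$ itself, not merely by the maximal subgroup $(y^{\omst})\Lambda$. So the content $C(\sigma)$ cannot be read off $\sigma\Lambda$ alone; its recursive definition must track the individual sub-transformations inside $\sigma$, and the genuinely delicate point is to arrange, through the product and $(-)^{\omst}$ clauses, that $\langle C(\sigma)\rangle$ nonetheless right-stabilises $\sigma^{\omst}\Lambda$ — this rests on elementary but fiddly facts about idempotents in $P(M)$ (e.g. $G_y[y]=G_y$, so that a trailing $[y]$ is absorbed into the group kernel), and is where the real work of the proof lies.
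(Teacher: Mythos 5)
This statement is not proved in the paper at all: it is stated as a Conjecture, and the authors only establish the string case (Proposition~\ref{stringactionissupereasy}), explicitly remarking that ``things are more complicated for higher rank elements of $\Omega(X)$.'' So there is no proof of record to compare against, and your argument has to stand on its own. Your base and product cases are sound (they use exactly Corollary~\ref{nosetbackflow} and Proposition~\ref{nobackflow}, the same ingredients the paper uses to see that $\cdot$ is an action), and your inclusion $Y(\sigma^{\omst}\Lambda)\subseteq Y\cdot\sigma^{\omst}$ is fine. But the whole $()^{\omst}$ case is delegated to an ``auxiliary lemma'' about a content set $C(\sigma)$ that you never prove and that you yourself describe as ``where the real work of the proof lies.'' That is a genuine gap, not bookkeeping.

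Moreover, the lemma as you state it cannot be proved, because its desiderata are mutually inconsistent. With $C(x)=\{[x]_M\}$ the claimed equivalence ``$\ell_1\xrightarrow{\ov{\sigma\Psi}}\ell_2$ iff $\ell_1(\sigma\Lambda)\subseteq\ell_2$ and $\ell_2\langle C(\sigma)\rangle\subseteq\ell_2$'' already fails for a single letter: free flow along $x$ imposes no closure condition on $\ell_2$, and if it did, forward flow would give $Y\cdot x=\bigcup_{k\geq 1}Y[x]_M^k$, contradicting the very conjecture you are proving (and your own base case). Likewise the requirement $\sigma\Lambda\subseteq\langle C(\sigma)\rangle$ clashes with the equivalence: for $\sigma=xy^{\omst}$ one computes $\ov{\sigma\Psi}=\{(U,V)\mid U[x]_M[y]_M^{\omega}\subseteq V,\ V[y]_M\subseteq V\}$, so the only trailing closure is under $[y]_M$, while $\sigma\Lambda\not\subseteq\langle [y]_M\rangle$ in general; enlarging $C(\sigma)$ to contain $[x]_M[y]_M^{\omega}$ destroys the ``only if'' direction. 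What is actually needed is weaker and cleaner: set $C(\varepsilon)=C(x)=\emptyset$, $C(\sigma\tau)=C(\tau)$, $C(\sigma^{\omst})=C(\sigma)\cup\sigma\Lambda$, and prove simultaneously, by structural induction on nonempty $\tau$, the absorption $\tau\Lambda\, C(\tau)\subseteq\tau\Lambda$ together with the normal form $\ov{\tau\Psi}=\{(U,V)\mid U(\tau\Lambda)\subseteq V,\ V C(\tau)\subseteq V\}$; the product and $\omega$-power steps then go through because the least admissible intermediate value is $U\sigma\Lambda\langle C(\sigma)\rangle=U\sigma\Lambda$, and the conjecture follows at once since $(Y,Y(\tau\Lambda))$ becomes the least stable pair above $(Y,B)$. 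With some such lemma actually proved the induction appears to close; as written, your proposal is a sketch whose pivotal step is both missing and mis-stated.
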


We do know exactly what happens for strings; things are more complicated for higher rank elements of $\Omega(X)$.

\begin{Prop}\label{stringactionissupereasy}
Let $w\in X^*\subseteq \Omega(X)$ and suppose that $(Y,P)$ is $\Fc$-stable.  Let $P=\{B_1,\ldots,B_r\}$ where $B_1,\ldots,B_k$ are the blocks of $P$ with $B_iw\neq \emptyset$. Then $B_1w,\ldots, B_kw$ are disjoint and the equality \[(Y,P)\cdot w\Upsilon=(Yw,\{B_1w,\ldots, B_kw\}) = (Y,P)\overrightarrow{w}\] holds.
\end{Prop}
\begin{proof}
We proceed by induction on $|w|$.  If $|w|=0$, then there is nothing to prove.  Suppose it is true for $u$ and let $w=ux$ with $x\in X$.  Suppose that $B_1,\ldots, B_m$ are the blocks with $B_iu\neq\emptyset$.  Then $k\leq m$. By induction, \[(Y,P)\cdot u\Upsilon=(Yu,\{B_1u,\ldots, B_mu\})=(Z,Q).\]   Since $(Y,P)\overrightarrow{u\Upsilon}=(Z,Q)$ is $\Fc$-stable, Proposition~\ref{Fnobackflow} yields $(Z,Q)\overleftarrow{x} =(Z,Q)$.  Consequently, Proposition~\ref{stringsareasy} implies that \[(Z,Q)\overrightarrow{x} = (Yw,\{B_1w,\ldots, B_kw\})=(W,S).\]  As $x\leq \Fc x\Fc$, it follows that if $(W',S')=(Z,Q)\cdot x\Upsilon$, then we have $(W,S)\leq (W',S')$. Now $W'=Zx=Yw$ by Theorem~\ref{secactionok} and so $(W',S')=(Zx,S')$.  But if $(W,S)<(W,S')$, then two blocks $B_iux,B_jux$ with $i\neq j$ must be contained in a single block of $S'$, contradicting $(Z,Q)\xrightarrow{\ov x}(W,S')$.  Thus $(W,S)=(W,S')$, as required.
\end{proof}

As $N=\langle \Fc x\Fc\mid x\in X\rangle$ is $X$-generated and acts on $\st n L$, it is natural to try and compute complexity using $\mathscr A=(\st n L,X)$ where the transitions come via the action of $N$ on $\st n L$.  One could then define a complete flow $F$ on $\mathscr A$ by $(Y,P)F = (Y,P)$.  If $N$ had complexity at most $n$, then this would prove that $\st n L$ contains all maximal $\pv C_n$-inevitable elements.  Unfortunately, $M\cong N$ and so we do not know the complexity of $N$.

\begin{Prop}\label{Mdivides}
The submonoid $\langle \Fc x\Fc\mid x\in X\rangle$ of $M(\Ec n L)$ is isomorphic to $M$.
\end{Prop}
\begin{proof}
If $r\in R$, then $(r,r)\in \st n L$ and Proposition~\ref{stringactionissupereasy} readily implies, for $w\in X^*$, that $(r,r)\cdot w\Upsilon = (rw,rw)$ (where as usual $rw=\emptyset$ if it is not defined).  Since $M$ acts faithfully on $R$, it follows that $v\Upsilon = w\Upsilon$ implies $[v]_M=[w]_M$ for $v,w\in X^*$.  Conversely, Proposition~\ref{stringactionissupereasy} immediately yields that $[v]_M=[w]_M$ implies $v\Upsilon = w\Upsilon$ for $v,w\in X^*$. This completes the proof.
\end{proof}

\subsection{The lower bound theorem}
We are now ready to prove the lower bound theorem for complexity.    Once again $M$ is a fixed group mapping monoid generated by $X$ and we set $L=\sp$.

\begin{Thm}[The Inevitability Theorem]\label{aperiodicinev}
Each element of $\st n L$ is $\pv C_n$-inevitable.
\end{Thm}
\begin{proof}
Let $\mathscr I$ be the set of $\pv C_n$-inevitable elements of $L$.
We show that it satisfies the axioms of Definition~\ref{definefstates}. This will show that each element of $\st n L$
is $\pv C_n$-inevitable.

As was observed earlier, points are \pv
V-inevitable for any pseudovariety \pv V.  Suppose that $\ell\in \mathscr I$ and $f\in \mathsf M_n(L)$. We show that $\ell\overrightarrow f\in \mathscr I$. This will imply Axiom (Forward-flow). If $\ell\overrightarrow{f}=B$, there is nothing to prove. So assume henceforth that $\ell\overrightarrow f\neq B$. Let
$\Ac=(Q,X)$ be any partial automaton with transition
monoid in $\pv C_n$ and suppose $F$ is a complete flow on $\Ac$. By Corollary~\ref{usablevalues}, there is an
element $t\in f\upsilon_{\pv C_n}$.  Since $\ell$ is $\pv
C_n$-inevitable, there is a state $q\in Q$ such that $\ell\leq qF$. By
the definition of a value, $qF\xrightarrow {\ov f} qtF$.  Since
$(\ell,B)\leq (qF,qtF)$, it follows that \[\ell\xrightarrow f B =
(\ell\overleftarrow f,\ell\overrightarrow f)\leq (qF,qtF);\] in particular, $qt\neq \square$ as $\ell\overrightarrow{f}\neq B$.  Since
$\Ac$ and $F$ were arbitrary, we deduce that $\ell\overrightarrow
f\in \mathscr I.$

For Axiom (Order ideal), suppose that $\ell'\in \mathscr I$ and $\ell\leq \ell'$.  Since the $\pv C_n$-inevitable elements of $L$ form an order ideal, it follows that $\ell\in \mathscr I$.  So let $F$ be a complete flow on a partial automaton $\mathscr A$ with transition monoid in $\pv C_n$.  Then $\ell\leq qF$ for some state $q$.  But $qF$ is $\Fc$-stable by Lemma~\ref{aperiodicvaluesareFstable}.  Therefore, $\ell\Fc\leq qF$ establishing that $\ell\Fc\in \mathscr I$.
\end{proof}

This leads to our lower bound for complexity, which is the main result of this paper.

\begin{Thm}[Lower Bound Theorem for Complexity]\label{thelowerboundtheorem}
Suppose $M$ is a finite $X$-generated group mapping monoid with
distinguished $\R$-class $R$.  If there exists $r\neq s\in R$ such that $r\H s$ and \[(\{r,s\},\{\{r,s\}\})\in \st n {\sp},\] then $M$ has complexity at least $n+2$.
\end{Thm}
\begin{proof}
This follows directly from Theorem~\ref{aperiodicinev}, Corollary~\ref{presentation-flowform-inev} and Remark~\ref{downto2points}.
\end{proof}

We remark that the proof of Theorem~\ref{aperiodicinev} would seem
to indicate that in Axiom (Forward-flow) we should allow any element
of $\Fc\mathsf M_n(L) \Fc$ to be used.  But it follows easily from
the definition of $\mathsf M_n(L)$, from the fact that $\Fc$ is a
closure operator and from \eqref{pre} that each element of $\mathsf
M_n(L)$ is below an element determined by the standard
interpretation of a well-formed formula.  More precisely, the set of
elements of $\mathscr C(L^2)$ that are below the standard
interpretation of a well-formed formula satisfies the axioms of
Definition~\ref{flowmonoid}.  Hence we would obtain no new maximal
$\pv C_n$-inevitable elements by allowing these other elements.  But only maximal elements are needed to
obtain the lower bound.

\section{Example: The Tall Fork}
The Tall Fork $F$ is a semigroup that was constructed by the second author in order to show that the Type I-Type II lower bound of~\cite{lowerbounds2} is not tight.  A description of $F$ can be found at the beginning of~\cite[Section 4.14]{qtheor} and we shall follow the notation therein religiously.  We also adjoin an identity $I$ to $F$ to make it a monoid $F^I$.  Of course, $F$ and $F^I$ have the same complexity.  The complexity of $F$ is at most $2$ by the Depth Decomposition Theorem~\cite{TilsonXI}.  We use our lower bound to show the complexity of $F^I$ is at least $2$.  To do so, it will be convenient to use the following form of the ``Tie-Your-Shoes'' Lemma~\cite[Lemma 4.14.29]{qtheor}.

\begin{Lemma}[Tie-your-shoes]\label{tieyourshoes}
Suppose  $R$ is the distinguished $\R$-class of an $X$-generated group mapping monoid $M$.  Assume the $\J$-class $J$ of $R$ has Rees matrix coordinatization $J^0\cong\MM^0(G,A,B,C)$. Let $a\in A$ correspond to the $\R$-class $R$.  Suppose that
$b_1Ca_0\neq 0\neq b_2Ca_0$.  Then if $(Y,P)\in \sp$ is $\Fc$-closed and
\[x=(a,g(b_1Ca_0)\inv,b_1),\ y=(a,g(b_2Ca_0)\inv,b_2)\in Y\] some $g\in G$, then $x\flow P y$.
\end{Lemma}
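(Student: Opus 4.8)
The plan is to produce an element $z\in M$ with $xz=yz\in R$, and then to exploit the fact that an $\Fc$-closed set-partition admits no back-flow along strings in order to force $x$ and $y$ into the same block of $P$. We may assume $b_1\neq b_2$, since otherwise $x=y$ and there is nothing to prove.

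First I would construct the element $z$. Pick any $b_0\in B$ and set $z=(a_0,1,b_0)\in J\subseteq M$, where $1$ is the identity of $G$; this is a genuine nonzero element of $J$. Since $b_1Ca_0\neq 0\neq b_2Ca_0$, the Rees matrix multiplication in $J^0$ applies and cancels the factors $(b_iCa_0)\inv(b_iCa_0)=1$, so that $xz=(a,g,b_0)=yz$. In particular this lies in $R$, so, viewing $(R,M)$ as a faithful partial transformation monoid, the action $x\cdot z=y\cdot z=(a,g,b_0)$ is defined.

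Next I would transfer this to the flow lattice. As $M$ is $X$-generated, choose $w\in X^*$ with $[w]_M=z$. Then free flow along $w$ is a product of the free flows $x\Phi$, $x\in X$, hence belongs to $\mathsf M_n(L)$ by axioms (Points) and (Products) of Definition~\ref{flowmonoid}. Since $(Y,P)$ is $\Fc$-closed, Proposition~\ref{Fnobackflow} gives $(Y,P)\overleftarrow{w}=(Y,P)$, i.e.\ $(Y,P)\in\dom\ov w$; so there is some $(Z,Q)\in\sp$ with $(Y,P)\xrightarrow{\ov w}(Z,Q)$, and by Proposition~\ref{closureoperforstrings} right multiplication by $w$ induces a \emph{partial injective} map $Y/P\to Z/Q$. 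Applying this to $r=x$, $s=y\in Y$: both $xw=x\cdot z$ and $yw=y\cdot z$ equal $(a,g,b_0)\in R$, so they are trivially $Q$-related, whence injectivity forces $x\flow P y$.

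The only real subtlety I anticipate is the Rees-coordinate bookkeeping: checking that $z=(a_0,1,b_0)$ is a genuine (nonzero) element of $J\subseteq M$, that $xz$ and $yz$ both land in the distinguished $\mathscr R$-class $R$ so that the transformation-monoid action of $z$ — equivalently of $[w]_M=z$ — is defined there, and that this action agrees with multiplication in $M$. Everything past that is a direct appeal to the no-back-flow results already established for $\Fc$-closed set-partitions, together with the description of string flows in Proposition~\ref{closureoperforstrings}.
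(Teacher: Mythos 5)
Your proof is correct and follows essentially the same route as the paper: construct an element $z$ with $xz=yz\in R$ (the paper takes $z=(a_0,1,b_1)$ rather than an arbitrary $(a_0,1,b_0)$), lift it to a word $w\in X^*$, and use the absence of back-flow for $\Fc$-closed set-partitions together with injectivity of the induced partial map on blocks to force $x\flow P y$. The only difference is presentational: the paper packages the final step as Proposition~\ref{stringactionissupereasy} (disjointness of the block images $B_iw$), whereas you appeal directly to Propositions~\ref{Fnobackflow} and~\ref{closureoperforstrings}, from which that proposition is itself derived.
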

\begin{proof}
Suppose that $P=\{B_1,\ldots, B_r\}$ and that $x\in B_i$ and $y\in B_j$ with $i\neq j$.
Let $w\in X^*$ be a string mapping to the element $z=(a_0,1,b_1)$.  Note that $xz=(a,g,b_1)=yz$.  Proposition~\ref{stringactionissupereasy} then implies that $B_iz=B_iw$ and $B_jz=B_jw$ are disjoint, contradicting $(a,g,b_1)\in B_iz\cap B_jz$.  Thus $i=j$, i.e., $x\flow P y$.
\end{proof}

We use all of $F$ as a generating set $X$ for $F^I$. Remember that elements of $\Omega(X)$ act via $\Upsilon$, which we omit from the notation.  Set $L=\mathsf{SP}(F^I,X)$ and put $r=(a_0,1,0')$.
Let us begin by observing that $(r,r)\in \st 0 L$.  Let $(Y_1,P_1) = (r,r)\cdot k^{\omst}$.  It is easy to see that \[r\cdot k^{\omst} = r\bigcup K = \{(a_0,1,0'),(a_0,1,2')\}.\]  Hence Theorem~\ref{secactionok}, yields $Y_1=\{(a_0,1,0'),(a_0,1,2')\}$.  The Tie-your-shoes Lemma then implies that $P=\{Y_1\}$.  Thus $(Y_1,\{Y_1\})\in \st 0 L$.  Let $(Y_2,P_2)= (Y_1,\{Y_1\})\cdot t$.  Proposition~\ref{stringactionissupereasy} then implies \[Y_2=\{(a_0,-1,0),(a_0,1,2)\}\] and $P_2=\{Y_2\}$.  Consider now $(Y_3,P_3)=(Y_2,\{Y_2\})\cdot h^{\omst}$.  One easily verifies that $Y_2\cdot h^{\omst} = Y_2\bigcup H = \{a_0\}\times \{\pm 1\}\times \{0,1,2,3\}$ and so Theorem~\ref{secactionok} implies that $Y_3= \{a_0\}\times \{\pm 1\}\times \{0,1,2,3\}$.  Since \[(Y_2,\{Y_2\})\xrightarrow{\ov{h^{\omst}}}(Y_3,P_3)\] is stable, we have \[(Y_2,\{Y_2\})\xrightarrow{\ov{h^{\omega}}}(Y_3,P_3)\] and so $Y_2$ is contained in a single block of $P_3$.  Repeated application of the Tie-your-shoes Lemma then establishes $P_3=\{Y_3\}$.  Thus $(Y_3,\{Y_3\})\in \st 0 L$ and hence \[\left(\{(a_0,1,0),(a_0,-1,0)\},\{\{(a_0,1,0),(a_0,-1,0)\}\}\right)\in \st 0 L.\]  Thus $F^I$ has complexity at least $2$ by Theorem~\ref{thelowerboundtheorem}.
\printindex
\bibliographystyle{abbrv}
\bibliography{standard2}
\end{document}